\newtheorem{thm}{Theorem}[section]
\newtheorem{lem}[thm]{Lemma}
\newtheorem{cor}[thm]{Corollary}
\newtheorem{cla}{Claim}
\newtheorem{clar}{Claim}
\newtheorem{cla2}{Claim}
\newcommand{\Rmnum}[1]{\expandafter\@slowromancap\romannumeral #1@} 
\begin{document}

\title{{On the forcing spectrum of generalized Petersen graphs $P(n,2)$}\thanks{Supported by NSFC (grant no. 11371180).}}
\author{Shuang Zhao, Jinjiang Zhu, Heping Zhang\thanks{Corresponding author.\protect\\ \hspace*{0.63cm}E-mail addresses: zhaosh08@lzu.edu.cn (S. Zhao), 759483117@qq.com (J. Zhu), zhanghp@lzu.edu.cn (H. Zhang).}\\
{\small School of Mathematics and Statistics, Lanzhou University, Lanzhou, Gansu 730000, China}}
\date{}
\maketitle

\begin{abstract}
    The forcing number of a perfect matching $M$ of a graph $G$ is the smallest cardinality of subsets of $M$ that are contained in no other perfect matchings of $G$. The forcing spectrum of $G$ is the collection of forcing numbers of all perfect matchings of $G$. In this paper, we classify the  perfect matchings of a generalized Petersen graph  $P(n,2)$ in two types, and show that the forcing spectrum is the union of two integer intervals. For $n\ge 34$, it is $\left[\lceil \frac { n }{ 12 }  \rceil+1,\lceil \frac { n+3 }{ 7 }  \rceil +\delta (n)\right]\cup \left[\lceil \frac { n+2 }{ 6 }  \rceil,\lceil \frac { n }{ 4 }  \rceil\right]$, where $\delta (n)=1$ if $n\equiv 3$ (mod 7), and $\delta (n)=0$ otherwise.
    \vskip 0.2in \noindent \textbf{Keywords:} Perfect matching; Forcing number; Forcing spectrum; Generalized Petersen graph.
\end{abstract}

\section{Introduction}
    The forcing number of a perfect matching of hexagonal systems has been introduced by Harary et al. \cite{original} in 1991. The roots of this concept can be found in earlier literatures by Randi\'{c} and Klein \cite{early,RK} using the name `innate degree of freedom', which plays an important role in the resonance theory of theoretic chemistry. For more details, we refer the reader to \cite{chechen}.

    Let $G$ be a graph with vertex set $V(G)$ and edge set $E(G)$. A \emph{perfect matching} $M$ of $G$ is a set of disjoint edges that covers all vertices of $G$. A \emph{forcing set} $S$ of $M$ is a subset of $M$ such that $S$ is contained in no other perfect matchings of $G$. Namely, the subgraph $G-V(S)$, which is obtained from $G$ by deleting ends of all edges in $S$, is empty (with no vertices) or has a unique perfect matching. The \emph{forcing number of $M$}, denoted by $f(G,M)$, is the smallest cardinality over all forcing sets of $M$. A cycle of $G$ is called \emph{$M$-alternating} if its edges appear alternately in $M$ and $E(G)\setminus M$. There is an equivalent definition for a forcing set of a perfect matching as follows.

    \begin{thm}[\cite{1,15}]
        \label{lem5}
        Let $G$ be a graph with a perfect matching $M.$ Then a subset $S\subseteq M$ is a forcing set of $M$ if and only if each $M$-alternating cycle of $G$ contains at least one edge of $S.$
    \end{thm}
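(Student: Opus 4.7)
My plan is to prove both implications using the standard symmetric-difference toggling trick, which converts an $M$-alternating cycle into a new perfect matching and vice versa.

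For the forward direction, I would argue by contrapositive. Suppose some $M$-alternating cycle $C$ is disjoint from $S$. Since the edges of $C$ alternate between $M$ and $E(G)\setminus M$, the set $M':=M\triangle E(C)$ is again a perfect matching of $G$: every vertex of $C$ keeps exactly one incident matching edge (its role on $C$ just flips), and every vertex outside $C$ is untouched. Moreover $M'\neq M$ because $E(C)\neq\emptyset$. The assumption $E(C)\cap S=\emptyset$ gives $S\subseteq M\setminus E(C)\subseteq M'$, so $S$ lies in a perfect matching other than $M$, contradicting that $S$ is a forcing set of $M$.

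For the reverse direction, assume every $M$-alternating cycle of $G$ meets $S$, and let $M'$ be any perfect matching of $G$ with $S\subseteq M'$; I want to show $M'=M$. Form $H:=M\triangle M'$. Since $M$ and $M'$ are both perfect matchings, each vertex is incident with exactly one edge of each, so in $H$ every vertex has degree $0$ (when the incident edges of $M$ and $M'$ coincide) or $2$ (when they differ). Hence $H$ decomposes into a disjoint union of even cycles, and by construction each such cycle alternates between edges of $M$ and edges of $M'$; in particular it is $M$-alternating. But $S\subseteq M\cap M'$ implies $E(H)\cap S=\emptyset$, so by the hypothesis none of these cycles can exist. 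Therefore $H=\emptyset$ and $M=M'$, proving that $S$ is a forcing set.

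There is no real obstacle in this argument; the only point requiring a little care is the decomposition of $M\triangle M'$ into $M$-alternating cycles (as opposed to paths), which is exactly what rules out accidental equality of two distinct matchings on $S$, and which relies critically on both matchings being \emph{perfect} via the degree computation above.
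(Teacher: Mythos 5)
Your proof is correct: both directions of the standard symmetric-difference argument (toggling an $M$-alternating cycle disjoint from $S$ to get a second perfect matching containing $S$, and decomposing $M\triangle M'$ into $M$-alternating cycles avoiding $S$) are carried out completely and accurately. The paper itself gives no proof of this statement, citing it from the literature, and your argument is exactly the standard one found in those sources.
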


    From the theorem we can see that the forcing number $f(G,M)$ is bounded below by the maximum number of disjoint $M$-alternating cycles. Using the minimax theorem on feedback set of Lucchesi and Younger \cite{lucchesi} and Barahona et al. \cite{barahona}, Pachter and Kim \cite{squaregrids} pointed out the following conclusion.

    \begin{thm}[\cite{squaregrids}]
        \label{thm2}
        Let $G$ be a bipartite graph without $K_{3,3}$ minor$.$ Then for each perfect matching $M$ of $G,$ $f(G,M)=C(G,M),$ where $C(G,M)$ denotes the maximum number of disjoint $M$-alternating cycles in $G.$
    \end{thm}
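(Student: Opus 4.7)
My plan is to translate both the forcing number and the maximum $M$-alternating cycle packing into dual parameters of an orientation of $G$ determined by $M$, and then to apply a Lucchesi--Younger-type minimax theorem. First I would fix a bipartition $V(G)=A\cup B$ and orient every edge of $M$ from its endpoint in $A$ to its endpoint in $B$, and every edge of $E(G)\setminus M$ in the opposite direction, obtaining a digraph $D=D(G,M)$. Since $M$ is a perfect matching, the resulting alternation structure forces every directed walk in $D$ to alternate between $M$-arcs and non-$M$-arcs, so the directed cycles of $D$ are in natural bijection with the $M$-alternating cycles of $G$. In particular, $C(G,M)$ equals the maximum number of arc-disjoint directed cycles of $D$.

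Next, by Theorem~\ref{lem5}, a subset $S\subseteq M$ is a forcing set of $M$ iff $S$ meets every directed cycle of $D$. Since each directed cycle of $D$ already contains at least one $M$-arc, $f(G,M)$ equals the minimum size of a feedback arc set of $D$ that lies entirely inside the $M$-arcs. I would then contract all arcs of $M$ in $D$ to obtain a digraph $D'$ whose directed cycles correspond bijectively to those of $D$; under this contraction, $f(G,M)$ becomes the minimum feedback vertex number of $D'$, and $C(G,M)$ becomes the maximum number of vertex-disjoint directed cycles of $D'$.

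The final step is the minimax equality between these two invariants of $D'$. For planar digraphs this is the dual form of the Lucchesi--Younger theorem, and the extension of Barahona et al.\ cited just before the statement lifts the same minimax to digraphs whose underlying graph has no $K_{3,3}$ minor. Because the class of $K_{3,3}$-minor-free graphs is closed under minors and therefore under contracting the matching $M$, the hypothesis transfers from $G$ to $D'$, and the minimax relation then gives $f(G,M)=C(G,M)$.

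The hard part will be the minimax step itself together with the verification of its hypotheses: the Lucchesi--Younger theorem and its $K_{3,3}$-minor-free extension are deep results, and one has to check carefully that the contraction of $M$ does not create a directed loop or short-circuit the minor-free structure in a way that would invalidate them. Once those technical points are settled, the remaining translation between forcing sets, feedback arc sets, and feedback vertex sets is essentially bookkeeping on the orientation $D$.
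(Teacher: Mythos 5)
The paper offers no proof of this statement---it is quoted from Pachter and Kim, who derive it from the Lucchesi--Younger minimax together with its $K_{3,3}$-minor-free extension by Barahona et al.---and your proposal reconstructs precisely that argument: orient by the bipartition and the matching, identify $M$-alternating cycles with directed cycles, identify forcing sets with transversals of the directed cycles, and invoke the packing--covering minimax. The only point to tighten is that the dual Lucchesi--Younger/Barahona statement is an \emph{arc} minimax, so it is cleaner to apply it to $D$ itself (using that a minimum transversal may be assumed to consist of $M$-arcs, and that $M$-alternating cycles are arc-disjoint if and only if they are vertex-disjoint) rather than to the contracted digraph $D'$, since the feedback-vertex-set versus vertex-disjoint-dicycles minimax for a general digraph is a genuinely different and harder theorem (it is the Guenin--Thomas result the paper records separately as Theorem~\ref{lemmima}).
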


    An extension of this theorem was given by Guenin and Thomas \cite{robin} using the minimax theorem on transversal.

    \begin{thm}[\cite{robin}]
        \label{lemmima}
        Let $G$ be a bipartite graph which contains no even subdivision of $K_{3,3}$ or the Heawood graph as a nice subgraph$.$ Then for each perfect matching $M$ of $G,$ $f(G,M)=C(G,M).$
    \end{thm}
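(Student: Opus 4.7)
The plan is to prove $f(G,M)=C(G,M)$ by translating both quantities into standard parameters on an associated digraph and then appealing to an integer packing/covering duality whose hypotheses match the forbidden-nice-subgraph condition.

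First I would set up the digraph $D=D(G,M)$. With the bipartition $(A,B)$ of $G$, orient every edge of $M$ from $A$ to $B$ and every non-$M$ edge from $B$ to $A$, and then contract each $M$-arc. The vertex set of $D$ is in bijection with $M$ and its arc set with $E(G)\setminus M$. Bipartiteness forces every directed cycle of the orientation of $G$ to alternate between $M$- and non-$M$-arcs, so after contraction the simple directed cycles of $D$ correspond bijectively to the $M$-alternating cycles of $G$. Moreover two $M$-alternating cycles of $G$ share an edge of $M$ iff their contracted images share a vertex of $D$, and they are edge-disjoint iff these images are vertex-disjoint. Combining these observations with Theorem \ref{lem5}, $f(G,M)$ becomes the minimum feedback vertex set $\tau(D)$ and $C(G,M)$ becomes the maximum packing number $\nu(D)$ of vertex-disjoint directed cycles in $D$.

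Second I would establish $\tau(D)=\nu(D)$ under the hypothesis. The inequality $\nu(D)\le\tau(D)$ is trivial; the reverse is the real content. I would rephrase the problem in the language of signed bipartite graphs by assigning sign $1$ to each non-$M$ edge and sign $0$ to each edge of $M$, so that the odd cycles of the resulting signed graph are exactly the $M$-alternating cycles. The desired min-max then becomes an integer duality between packing odd cycles and covering them by elements of $M$. This is furnished by Guenin's theorem on weakly bipartite signed graphs together with its extension by Guenin and Thomas: assuming the signed graph excludes two particular signed minors, the odd-cycle covering polyhedron is totally dual integral and one gets integer optima on both sides. The translation I would need to verify is that these two forbidden signed minors, pulled back through the signing, are precisely nice even subdivisions of $K_{3,3}$ and of the Heawood graph in $G$; ``nice'' is the correct unsigned shadow of ``minor'' because requiring $G-V(H)$ to have a perfect matching is exactly what makes the signed-minor operations lift to subgraph operations in $G$ that preserve the matching structure induced by $M$.

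The main obstacle is the integer duality itself. Its proof by Guenin (for the $K_{3,3}$ obstruction) and Guenin-Thomas (adding the Heawood obstruction) proceeds through a lengthy structural decomposition of signed bipartite graphs: one isolates a suitably ``bi-contractible'' configuration, builds an integer dual certificate by induction on the size of the signed graph, and invokes the forbidden-minor hypothesis at every inductive step to keep the decomposition going. Reproducing this from scratch is outside the scope of a sketch, so in practice I would cite it as a black box; the genuinely non-routine work that remains on my side is the translation dictionary between nice subgraphs of $G$ and minors of the associated signed bipartite graph, together with the verification that the structure theorem's hypothesis, phrased in the signed setting, is equivalent to the hypothesis phrased for $G$ in the theorem statement.
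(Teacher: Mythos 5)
First, a point of order: the paper does not prove this statement. Theorem \ref{lemmima} is quoted, with citation, from Guenin and Thomas \cite{robin}, where it appears as one of the equivalent formulations of their min--max theorem on packing directed circuits; the present authors use it purely as a black box. So there is no in-paper argument to compare yours against, and the only thing to assess is whether your reduction to the cited machinery is sound.

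Your first step is the right one and is the standard bridge: orient $M$ from $A$ to $B$ and $E(G)\setminus M$ from $B$ to $A$, contract the matching arcs to obtain a digraph $D(G,M)$ whose directed circuits correspond bijectively to the $M$-alternating cycles of $G$; by Theorem \ref{lem5}, $f(G,M)$ is then a minimum feedback vertex set of $D$ and $C(G,M)$ a maximum collection of vertex-disjoint directed circuits, and the theorem becomes the Guenin--Thomas packing theorem for $D$, with the forbidden nice even subdivisions of $K_{3,3}$ and the Heawood graph translating into their digraph obstructions. (In fact Guenin and Thomas state the bipartite-graph-with-perfect-matching version explicitly, so even this translation is already done for you.) Your second step, however, goes astray: with sign $1$ on the edges of $E(G)\setminus M$ and sign $0$ on $M$, an $M$-alternating cycle of length $2k$ has total sign $k$, so the odd cycles of that signed graph are the alternating cycles of length $\equiv 2 \pmod 4$ together with assorted non-alternating cycles --- they are not ``exactly the $M$-alternating cycles.'' Consequently the appeal to Guenin's weakly bipartite theorem (which concerns the odd-circuit cover polyhedron of signed graphs excluding an odd $K_5$ minor, a different polyhedron with a different obstruction) does not apply here; the correct black box is the directed-circuit packing theorem you had already reached in the digraph formulation. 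Dropping the signed-graph detour leaves a correct, if entirely citation-dependent, account of why the theorem holds, which is all the paper itself offers.
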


    The \emph{maximum} (resp. \emph{minimum}) \emph{forcing number} of a graph $G$ is the maximum (resp. minimum) value of $f(G,M)$ over all perfect matchings $M$ of $G$. Adams et al. \cite{1} introduced the \emph{forcing spectrum} of $G$ as the collection of forcing numbers of all perfect matchings in $G$. To consider the distribution of forcing numbers of perfect matchings in $G$, the authors in \cite{zhao,zhao1} proposed the \emph{forcing polynomial} of $G$ as
    \begin{equation*}
        \label{equ1}
        F(G,x)=\sum_{M\in\mathcal{M}(G)}{{x}^{f(G,M)}},
    \end{equation*}
    where $\mathcal{M}(G)$ denotes the set of all perfect matchings of $G$.

    For a hexagonal system with a perfect matching, Xu et al. \cite{Xu} showed that the maximum forcing number is equal to the Clar number (i.e. the size of a maximum resonant set), which can measure the stability of benzenoid hydrocarbons, and Zhou and Zhang \cite{zhou} proved that for each perfect matching $M$ with the maximum forcing number, there exists a maximum resonant set consisting of disjoint $M$-alternating hexagons. Some stronger results hold for polyomino graphs \cite{zhoux1,zhoux2}. For a hexagonal system with minimum forcing number one \cite{5,20,19}, the forcing spectrum form either the integer interval from one to the Clar number or with only the gap two \cite{deng}. By Theorem \ref{thm2}, Pachter and Kim \cite{squaregrids} and Afshani et al. \cite{2} gave the forcing spectrum of square grids $P_{2n}\times P_{2n}$ as an integer interval $\left[n,n^2\right]$. By introducing the trailing vertex method, Riddle \cite{15} presented the minimum forcing numbers of tori $C_{2m}\times C_{2n}$ and hypercubes $Q_k$ with even $k$, and Wang et al. \cite{yedong} derived the minimum forcing number of toroidal polyhexes. Sharp lower bounds for minimum forcing numbers of  boron-nitrogen fullerene graphs and fullerene graphs were obtained in \cite{jiangxiao} and \cite{22}, respectively. Furthermore, the maximum forcing numbers of some graphs have been studied, such as stop signs \cite{stopsigns}, rectangle grids $P_{m}\times P_{n}$ \cite{2}, cylindrical girds $P_{m}\times C_{n}$ \cite{2,jiang2}, and tori $C_{2m}\times C_{2n}$ \cite{9}. Recently, Lei et al. \cite{Lei} put forward the anti-forcing number of a perfect matching of a graph, and showed that for a perfect matching of a graph the anti-forcing number is no less than the forcing number. For the anti-forcing spectrum of a graph, see \cite{deng1,deng2}.

    A \emph{generalized Petersen graph} $P(n,k)$ ($n\ge 3$, $1\le k\le n-1$) \cite{def} is a graph on $2n$ vertices with vertex set
    \[V(P(n,k))=\{ { u }_{ i },{ v }_{ i }:0\le i\le n-1 \},\]
    and edge set
    \[E(P(n,k))=\{ { u }_{ i }{ u }_{ i+k },{ u }_{ i }{ v }_{ i },{ v }_{ i }{ v }_{ i+1 }:0\le i\le n-1\}. \]
    Unless stated, the subscripts modulo $n$ in the following. The edges $u_iv_i$ are referred to \emph{spokes}. Some properties of $P(n,k)$ were studied, such as Hamilton connectivity \cite{per3}, domination number \cite{per1}, total coloring \cite{per2} and reliability \cite{newww}. Moreover, Schrag et al. \cite{lingyige} and Yu \cite{yu} showed that for $k\ge 3$, $P(n,k)$ is 2-extendable if and only if $n \neq 2k$ or $3k$; $P(n,2)$ is 2-extendable if and only if $n \neq 4,5,6,8$; $P(n,1)$ is 2-extendable if and only if $n$ is even.

    In this paper, we focus on generalized Petersen graph in the case of $k=2$. We always use $P(n)$ to stand for $P(n,2)$ in the following. In particular, $P(5)$ is the usual Petersen graph (see Fig. \ref{example}(a)). For convenience, we place $P(n)$ in a strip with the left side and right side identified as illustrated in Fig. \ref{example}(b). In the next section, we classify the  perfect matchings of $P(n)$ in two types, calculate the perfect matching count, and list the forcing polynomials of $P(n)$ for $3\le n \le 36$. By analysing properties of perfect matchings, we obtain two sets of forcing numbers of first and second types of perfect matchings in Sections 3 and 4, respectively, which are integer intervals. In particular, for $n\ge 11$, the forcing numbers of first type of perfect matchings are continuous from $\lceil \frac { n+2 }{ 6 } \rceil$ to $\lceil \frac { n }{ 4 } \rceil$; for $n\ge 34$, the forcing numbers of second type of perfect matchings are continuous from $\lceil \frac { n }{ 12 } \rceil+1$ to $\lceil \frac { n+3 }{ 7 } \rceil +\delta (n)$, where $\delta (n)=1$ if $n\equiv 3~(\text{mod } 7)$, and $\delta (n)=0$ otherwise. From the above conclusions, it follows that the forcing spectrum of $P(n)$ is continuous for $n=3,4,\ldots,58,59,66,73,80,87,94$, and has one gap for others $n$.

    \begin{figure}[htbp]
        \centering
        \includegraphics[height=0.71in]{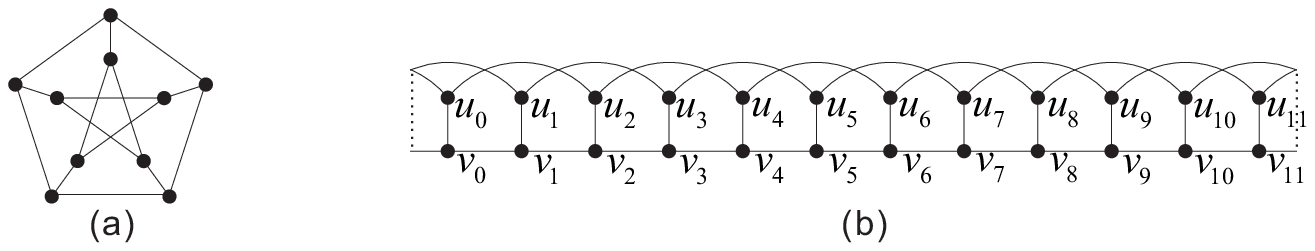}
        \caption{(a) The Petersen graph $P(5)$ and (b) $P(12)$.}
        \label{example}
    \end{figure}

\section{Some preliminaries}
    First we present some properties of a perfect matching of $P(n)$.

    Let $\mathcal{M}(P(n))$ be the set of perfect matchings of $P(n)$. For $M\in  \mathcal{M}(P(n))$, if there are no spokes in $M$, then $M$ should be one of the two perfect matchings illustrated with bold lines in Figs. \ref{case}(a) and (b). Unless stated, we use bold lines to denote the edges in a perfect matching in the following. If there is a spoke in $M$, then the number of spokes between any two consecutive spokes $u_iv_i$ and $u_jv_j$ ($i<j$) of $M$ is even (here the first spoke $u_lv_l$ can be considered as $u_{l+n}v_{l+n}$). This is because $v_kv_{k+1}\in M$ for $k=i+1,i+3,\ldots,j-2$. Note that if there is precisely one spoke $u_iv_i$ in $M$, then itself can be considered as two consecutive spokes $u_iv_i$ and $u_{i+n}v_{i+n}$, which implies that $n$ is odd.

    Moreover, let $u_iv_i$ and $u_jv_j$ ($i<j$) be two consecutive spokes in $M$. If $j-i-1\equiv 0$ (mod 4) (resp. $j-i-1\equiv 2$ (mod 4)), then the edges in $M$ incident with the vertices $u_k$ and $v_k$ must be the ones illustrated in Fig. \ref{case}(c) (resp. Fig. \ref{case}(d)) for $k=i+1,i+2,\ldots,j-1$. So either the number of spokes between any two consecutive spokes in $M$ is 0 (mod 4), or the number of spokes between any two consecutive spokes in $M$ is 2 (mod 4).

    \begin{figure}[htbp]
        \centering
        \includegraphics[height=0.89in]{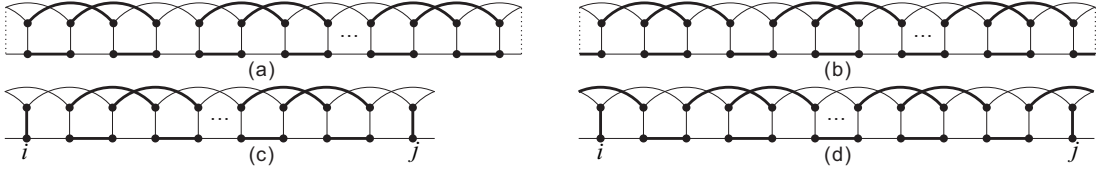}
        \caption{Two types of perfect matchings of $P(n)$.}
        \label{case}
    \end{figure}

    We now divide $\mathcal{M}(P(n))$ in two subsets: $\mathcal{M}_1(P(n))=\{M\in \mathcal{M}(P(n)):$ $M$ has a spoke and the number of spokes between any two consecutive spokes in $M$ is 0 (mod 4)\}$\cup$\{$M\in \mathcal{M}(P(n)):$ $M$ is illustrated in Fig. \ref{case}(a)\}, and $\mathcal{M}_2(P(n))=\{M\in \mathcal{M}(P(n)):$ $M$ has a spoke and the number of spokes between any two consecutive spokes in $M$ is 2 (mod 4)\}$\cup$\{$M\in \mathcal{M}(P(n)):$ $M$ is illustrated in Fig. \ref{case}(b)\}.

    We now count perfect matchings of $P(n)$ in each type.

    There are two ways $A$ and $B$ in Fig. \ref{stru1}(a) to classify the edges in a perfect matching in $\mathcal{M}_1(P(n))$. In detail, four edges $u_iu_{i+2},u_{i+1}u_{i+3},v_{i}v_{i+1},v_{i+2}v_{i+3}$ constitute a structure $A$, and one spoke $u_jv_j$ constitutes a structure $B$. Then each perfect matching in $\mathcal{M}_1(P(n))$ can be expressed by a (not necessarily unique) cyclic sequence of $A$ and $B$ with $4a+b=n$, where $a$ and $b$ denote the number of $A$ and $B$, respectively. Also, we use the notation $W^m$ to denote sequence $\underbrace{WW\cdots W}_m$, where $W$ is a sequence of $A$ and $B$. For example, the perfect matching of $P(26)$ in Fig. \ref{stru1}(b) can be expressed by $AABBBBABABBBBB$ or $A^2B^4(AB)^2B^4$.

    We define a \emph{chain} to be a vertex induced subgraph of $P(n)$ admitting a perfect matching expressed by a sequence of $A$ and $B$. Also, we could use the sequence to express the chain. As an example, a chain $P(n)[\{u_j,v_{j}:j=i,i+1,\ldots,i+10\}]$ with the perfect matching $AABBB$ (or briefly, a chain $AABBB$), is illustrated in Fig. \ref{stru1}(c). For $P(n)$ with a perfect matching expressed by a sequence of $A$ and $B$, we define a \emph{segment} $A$ (resp. $B$) to be an (inclusion-wise) maximal chain with perfect matching expressed by a sequence of $A$ (resp. $B$), and an \emph{$AB$-chain} to be a chain formed by a segment $A$ and its immediate right-hand segment $B$.

    \begin{figure}[htbp]
        \centering
        \includegraphics[height=1.67in]{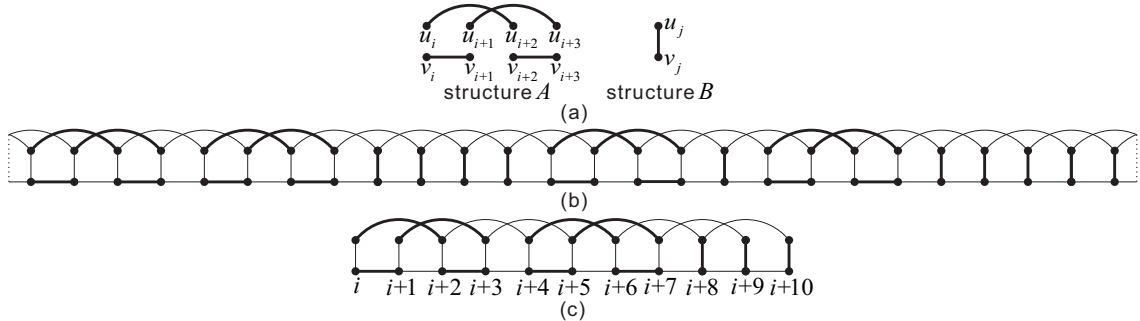}
        \caption{(a) Structures $A$ and $B$, (b) perfect matching $A^2B^4(AB)^2B^4$, (c) chain $AABBB$.}
        \label{stru1}
    \end{figure}

    Next we calculate $|\mathcal{M}_1(P(n))|$. By establishing a one-to-one correspondence between the set $\mathcal{M}_l$ $(l\ge 0)$ of perfect matchings in $\mathcal{M}_1(P(n))$ ($n\ge 5$) with $l$ spokes and the set $\mathcal{S}_l$ of ways to select $l$ balls in $n$ distinct balls arranged in a cycle such that the number of balls between any two consecutive selected balls is 0 (mod 4), we have $|\mathcal{M}_l|=|\mathcal{S}_l|$.

    Suppose there are $l+\frac { n-l }{ 4 }$ distinct boxes. Select $l$ boxes from them, and denote each by $\mathcal{B}$ and each of the others by $\mathcal{A}$. Then the total number of such selections is $\binom{l+\frac { n-l }{ 4 }}{l}$. Obviously, each selection corresponds to a unique sequence of $\mathcal{A}$ and $\mathcal{B}$, denoted by $Q_i$ ($1\le i\le \binom{l+\frac { n-l }{ 4 }}{l}$) respectively, which can express a (not necessarily unique) perfect matching in $\mathcal{M}_l$.

    Pick a $Q_i$, and put four balls in each box $\mathcal{A}$ and one ball in each box $\mathcal{B}$. Then the total number of balls is $n$. Place the $n$ balls in a line with the same order as boxes. Copy $Q_i$ $n$ times to get $Q_i^1,Q_i^2,\ldots,Q_i^n$. Label the balls in each $Q_i^j$ as $j,j+1,\ldots,n,1,2,\ldots,j-1$ for $j=1,2,\ldots,n$. Deal with others $Q_k$ by the same way as above to get $n\cdot \binom{l+\frac { n-l }{ 4 }}{l}$ labels. Obviously, each label corresponds to a unique way in $\mathcal{S}_l$, and naturally, a unique perfect matching in $\mathcal{M}_l$ as well.

    It is easy to see that each perfect matching in $\mathcal{M}_l$ (expressed by $i_1i_2\ldots i_{l+\frac { n-l }{ 4 }}$ with $i_j\in \{A,B\}$, $j=1,2,\ldots,l+\frac {n-l} 4$) coincides with some labels $Q_{k_1}^{j_1},Q_{k_2}^{j_2},\ldots,Q_{k_w}^{j_w}$. To count the total number $w$, we define the \emph{period} $p$ of $i_1i_2\ldots i_{l+\frac { n-l }{ 4 }}$ to be $\min\{p\ge 1:i_j=i_{j+p~(\text{mod}~ l+\frac { n-l }{ 4 })}\text{ for } j=1,2,\ldots,l+\frac { n-l }{ 4 }\}$. Then there are $p$ sequences from $\{Q_i:i=1,2,\ldots,\binom{l+\frac { n-l }{ 4 }}{l}\}$ being able to express $M$. Furthermore, for each of the above sequences $Q_k$, there are $m:=\frac{l+\frac { n-l }{ 4 }}{p}$ labels $Q_{k}^{j_1},Q_{k}^{j_2},\ldots,Q_{k}^{j_m}$ coinciding with $M$. Then the times of repetitions of $M$ in $n\cdot \binom{l+\frac { n-l }{ 4 }}{l}$ labels is $p\cdot \frac{l+\frac { n-l }{ 4 }}{p}=l+\frac { n-l }{ 4 }$, which implies $|\mathcal{M}_l|=\frac { n }{ l+\frac { n-l }{ 4 }  } \binom{l+\frac { n-l }{ 4 }}{l}$. Using the initial cases of $n=3$ and 4 from Table \ref{tab}, we have the following formula.

    \begin{thm}
        \label{thm1num}
        For $n\ge 3,$ $|\mathcal{M}_1(P(n))|=\left\{ \begin{array} {ll} 2 & \text{if}~n=4, \\  \sum \limits_{ l=0,\ n-l\equiv 0~(\text{\emph{mod} }4)  }^{ n }{\frac { n }{ l+\frac { n-l }{ 4 }  } \binom{l+\frac { n-l }{ 4 }}{l}} & \text{otherwise.}\end{array} \right.$
    \end{thm}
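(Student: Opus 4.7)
The plan is to read off the formula as a bookkeeping consequence of the counting argument already carried out before the statement, with separate verification of the small cases $n=3$ and $n=4$ against Table~\ref{tab}. The only genuine subtlety is the small-graph degeneracy of $P(4)$, which forces a special treatment.

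First I would observe that $\mathcal{M}_l$ is automatically empty unless $n-l\equiv 0\pmod{4}$: by the structural analysis just before the statement, a matching in $\mathcal{M}_1(P(n))$ with $l$ spokes decomposes into $l$ copies of structure $B$ and $(n-l)/4$ copies of structure $A$ arranged cyclically on the strip, so $(n-l)/4$ must be a nonnegative integer. For $n\ge 5$, the labeled-box argument already established in the paper gives $|\mathcal{M}_l|=\frac{n}{t}\binom{t}{l}$ in this situation, where $t=l+(n-l)/4$. Summing over the admissible $l\in\{0,1,\ldots,n\}$ with $n-l\equiv 0\pmod{4}$ immediately yields the closed form in the ``otherwise'' branch.

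Next I would verify the two remaining values against Table~\ref{tab}. For $n=3$ only $l=3$ satisfies the residue condition, and the formula returns $\frac{3}{3}\binom{3}{3}=1$, matching the unique all-spoke perfect matching of $P(3)$; hence $n=3$ is already covered by the generic clause. The main obstacle is $n=4$, where brute enumeration gives $|\mathcal{M}_1(P(4))|=2$ (the Fig.~\ref{case}(a)-pattern matching together with the unique all-spoke matching), while the generic formula would return $\frac{4}{1}\binom{1}{0}+\frac{4}{4}\binom{4}{4}=5$. The overcount comes from the fact that in $P(4)$ an inner edge $u_iu_{i+2}$ is identified with $u_{i+2}u_i$, so distinct cyclic rotations of the length-one sequence ``$A$'' no longer produce distinct edge sets, and moreover the two no-spoke matchings of $P(4)$ split one into $\mathcal{M}_1$ and one into $\mathcal{M}_2$; the labeled-box bijection is therefore valid only for $n\ge 5$, and $n=4$ must be isolated as a separate case in the statement.
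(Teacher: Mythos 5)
Your proposal is correct and follows essentially the same route as the paper: it invokes the ball-and-box counting argument (valid for $n\ge 5$) to get $|\mathcal{M}_l|=\frac{n}{l+\frac{n-l}{4}}\binom{l+\frac{n-l}{4}}{l}$, sums over the admissible $l$, and settles $n=3$ and $n=4$ by direct inspection against Table \ref{tab}, exactly as the paper does. Your added diagnosis of why $n=4$ must be split off (coinciding rotations of $A$ and one spoke-free matching falling into $\mathcal{M}_2(P(4))$) is accurate and goes slightly beyond what the paper records.
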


    Similarly, there are also two ways $C$ and $D$ in Fig. \ref{stru2}(a) to classify the edges in a perfect matching in $\mathcal{M}_2(P(n))$. In detail, three edges $u_iu_{i+2},u_{i+1}v_{i+1},v_{i+2}v_{i+3}$ constitute a structure $C$, and four edges $u_ju_{j+2},u_{j+1}u_{j+3},v_{j+1}v_{j+2},v_{j+3}v_{j+4}$ constitute a structure $D$. Then each perfect matching in $\mathcal{M}_2(P(n))$ can be expressed by a (not necessarily unique) cyclic sequence of $C$ and $D$ with $3c+4d=n$, where $c$ and $d$ denote the number of $C$ and $D$, respectively. For example, the perfect matching of $P(25)$ in Fig. \ref{stru2}(b) can be expressed by $CDDDDCC$ or $CD^4C^2$. A vertex induced subgraph of $P(n)$ admitting a perfect matching expressed by a sequence of $C$ and $D$ is also referred to \emph{chain}.

    \begin{figure}[htbp]
        \centering
        \includegraphics[height=1.1in]{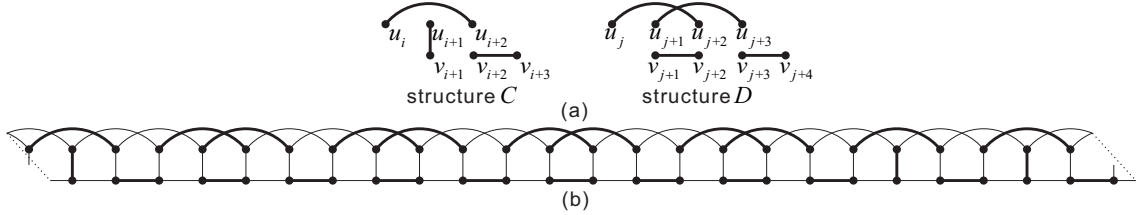}
        \caption{(a) Structures $C$ and $D$ and (b) perfect matching $CD^4C^2$.}
        \label{stru2}
    \end{figure}

    Similar to Theorem \ref{thm1num}, we have the following formula on $|\mathcal{M}_2(P(n))|$.

    \begin{thm}
        \label{thm2num}
        For $n\ge 3,$ $|\mathcal{M}_2(P(n))|=\left\{ \begin{array} {ll} 1 & \text{if}~n=4, \\  \sum \limits_{  l=0, \ n-3l\equiv 0~(\text{\emph{mod} }4) }^{ \lfloor \frac { n }{ 3 } \rfloor }{\frac { n }{ l+\frac { n-3l }{ 4 }  } \binom{l+\frac { n-3l }{ 4 }}{l}} & \text{otherwise.}\end{array} \right.$
    \end{thm}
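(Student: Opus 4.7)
The plan is to imitate the proof of Theorem~\ref{thm1num} almost verbatim, swapping the building-block pair $(A,B)$ for the pair $(D,C)$ described in Fig.~\ref{stru2}(a). A structure $C$ occupies three columns and contributes one spoke, while a structure $D$ occupies four columns and contributes none. Consequently, a matching $M\in\mathcal{M}_2(P(n))$ with exactly $l$ spokes is encoded by a cyclic word on $\{C,D\}$ with $l$ letters $C$ and $(n-3l)/4$ letters $D$, which forces the divisibility condition $n-3l\equiv 0\pmod 4$ and the range $0\le l\le \lfloor n/3\rfloor$.

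Let $\mathcal{M}_l$ denote the set of matchings in $\mathcal{M}_2(P(n))$ with $l$ spokes. Following the previous strategy, I first identify $|\mathcal{M}_l|$ with $|\mathcal{S}_l|$, where $\mathcal{S}_l$ is the set of selections of $l$ out of $n$ balls arranged in a cycle such that the number of unselected balls between any two consecutive selected balls is $\equiv 2\pmod 4$; each selected ball corresponds to the central spoke of a $C$-block. Then I apply exactly the counting scheme used in the proof of Theorem~\ref{thm1num}: take $l+(n-3l)/4$ distinct boxes, mark $l$ of them as $\mathcal{C}$ and the rest as $\mathcal{D}$, insert three balls in each $\mathcal{C}$ and four in each $\mathcal{D}$, and form $n$ rotational labelings per sequence, producing $n\binom{l+(n-3l)/4}{l}$ labels in total.

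The overcounting is resolved by the same period argument: if the cyclic $\mathcal{C}/\mathcal{D}$ pattern representing $M$ has period $p$, then precisely $p$ of the sequences $Q_i$ represent $M$, and each such $Q_i$ supplies $(l+(n-3l)/4)/p$ coinciding labels; hence each $M\in\mathcal{M}_l$ is counted $l+(n-3l)/4$ times among the $n\binom{l+(n-3l)/4}{l}$ labels. Dividing yields $|\mathcal{M}_l|=\frac{n}{l+(n-3l)/4}\binom{l+(n-3l)/4}{l}$, and summing over the admissible values of $l$ gives the stated sum for all $n\ne 4$.

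The main (and essentially only) obstacle, exactly as in Theorem~\ref{thm1num}, is the tiny case $n=4$: the only admissible parameters are $l=0$, $d=1$; the single $D$-block already wraps around the entire $P(4)$, and all four of its rotational labelings coincide with the same matching. The balance between the $p$ representing sequences and the $(l+d)/p$ coinciding labels per representative collapses, because the number of balls per block equals $n$. I would therefore dispense with $n=4$ by direct inspection—Fig.~\ref{case}(b) exhibits the unique matching in $\mathcal{M}_2(P(4))$—and simultaneously verify the small base case $n=3$ against Table~\ref{tab}, completing the formula.
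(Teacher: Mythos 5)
Your proposal is correct and takes essentially the same route as the paper, which disposes of this statement with the single remark that it follows ``similar to Theorem \ref{thm1num}'': you carry out exactly that adaptation, replacing the boxes $\mathcal{A}$/$\mathcal{B}$ holding four/one balls by boxes $\mathcal{D}$/$\mathcal{C}$ holding four/three balls, and reusing the same period--overcounting argument to divide $n\binom{l+\frac{n-3l}{4}}{l}$ by $l+\frac{n-3l}{4}$. The degenerate case $n=4$ (where, strictly speaking, two of the four rotations of the single $D$-block yield the $A$-type matching belonging to $\mathcal{M}_1(P(4))$ rather than all four coinciding) is rightly settled by direct inspection, just as the paper falls back on Table \ref{tab} for its small initial cases.
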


    By a computer program, we obtain explicit expressions for forcing polynomial of $P(n)$ for $n=3,4,\ldots,36$ listed in Table \ref{tab}, where the first sum form is $\sum \limits_{ M\in { \mathcal{M} }_1(P(n))}{ { x }^{ f(P(n),M) } }$ and the second sum form is $\sum \limits _{ M\in { \mathcal{M} }_2(P(n))}{ { x }^{ f(P(n),M) } }$.

    \begin{table}[htbp]\scriptsize
        \centering
        \caption{\label{tab}{Forcing polynomial of $P(n)$ $(3\le n \le 36)$}.}
        \begin{tabular}{clcl}
            \toprule
            $n$ & forcing polynomial& $n$ & forcing polynomial \\
            \midrule
            3 & $(x^2)+(3x)$ & 20 & $(330x^5+300x^4)+(34x^4+20x^3)$ \\
            4 & $(2x)+(x)$  & 21 & $(x^6+742x^5+126x^4)+(70x^4+3x^3)$\\
            5 & $(6x^2)+(0)$& 22 & $(133x^6+1034x^5+33x^4)+(66x^4+11x^3)$\\
            6 & $(7x^2)+(3x^2)$ & 23 & $(300x^6+1357x^5)+(69x^4+23x^3)$\\
            7 & $(8x^2)+(7x^2)$ & 24 & $(859x^6+1428x^5)+(24x^5+100x^4+3x^3)$\\
            8 & $(4x^3+9x^2)+(4x^3)$ & 25 & $(x^7+2150x^6+1005x^5)+(150x^4)$\\
            9 & $(x^3+18x^2)+(3x^2)$& 26 & $(287x^7+3523x^6+546x^5)+(169x^4)$\\
            10 & $(26x^3)+(10x^3)$ & 27 & $(757x^7+5013x^6+243x^5)+(54x^5+165x^4)$\\
            11 & $(34x^3)+(11x^2)$ & 28 & $(2203x^7+6041x^6+56x^5)+(193x^5+84x^4)$\\
            12 & $(47x^3)+(4x^3+3x^2)$ & 29 & $(x^8+6119x^7+5336x^6)+(203x^5+116x^4)$\\
            13 & $(x^4+65x^3)+(13x^3)$& 30 & $(617x^8+11335x^7+3860x^6)+(205x^5+183x^4)$\\
            14 & $(57x^4+35x^3)+(21x^3)$ & 31 & $(1861x^8+17422x^7+2542x^6)+(31x^6+372x^5+93x^4)$\\
            15 & $(91x^4+35x^3)+(18x^3)$& 32 & $(5789x^8+23008x^7+1328x^6)+(564x^5+32x^4)$\\
            16 & $(125x^4+48x^3)+(20x^3)$ & 33 & $(x^9+17237x^8+23936x^7+407x^6)+(693x^5+14x^4)$\\
            17 & $(x^5+238x^4)+(17x^4+17x^3)$ & 34 & $(1327x^9+35547x^8+20468x^7+51x^6)+(85x^6+765x^5+34x^4)$\\
            18 & $(61x^5+270x^4)+(39x^3)$ & 35 & $(4516x^9+58842x^8+15860x^7)+(427x^6+630x^5+35x^4)$\\
            19 & $(153x^5+304x^4)+(38x^3)$ & 36 & $(15137x^9+83790x^8+10416x^7)+(508x^6+792x^5+3x^4)$\\
            \bottomrule
        \end{tabular}
    \end{table}

    We now describe a method to test whether a graph has a unique perfect matching.

    It is well known that a bipartite graph with a unique perfect matching contains a pendant edge (with an end of degree one) (see \cite{lovasz}). Kotzig \cite{kotzig} showed that if a connected graph has a unique perfect matching, then the graph has a cut edge in the perfect matching. Some immediate consequences of the above results are as follows.

    \begin{thm}[\cite{unique}]
        A connected graph $G$ has a unique perfect matching if and only if\\
        $(1)$ $G$ has a cut edge $e$ such that $G-e$ has an odd component$,$ and\\
        $(2)$ when the ends of the cut edge are deleted$,$ the resulting subgraph (if nonempty) has a unique perfect matching$.$
    \end{thm}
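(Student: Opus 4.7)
The plan is to prove the two directions separately, relying essentially only on the two facts cited immediately above the statement: the pendant-edge characterisation in bipartite graphs, which via Kotzig's extension tells us that a connected graph with a unique perfect matching must contain a cut edge lying in that matching, together with the elementary parity observation that in a graph with a perfect matching the number of odd components obtained after deleting any edge must be even.

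For the forward direction, I would assume $G$ has a unique perfect matching $M$ and invoke Kotzig's theorem to produce a cut edge $e=uv\in M$. Because $G$ is connected, $G-e$ splits into exactly two components $H_u\ni u$ and $H_v\ni v$. The set $M\setminus\{e\}$ is a perfect matching of $G-\{u,v\}$, so $H_u-u$ and $H_v-v$ each have even order; therefore $|V(H_u)|$ and $|V(H_v)|$ are both odd, which gives (1). For (2), if $G-\{u,v\}$ is empty there is nothing to prove; otherwise, suppose it admitted two distinct perfect matchings $M_1$ and $M_2$. Then $M_1\cup\{e\}$ and $M_2\cup\{e\}$ would be two distinct perfect matchings of $G$, contradicting the uniqueness of $M$.

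For the backward direction, I would assume (1) and (2) and let $e=uv$ be the cut edge supplied by (1), with $H_u,H_v$ the two components of $G-e$. By (2), either $G-\{u,v\}$ is empty or it has a perfect matching $M^{\ast}$; in either case $|V(G)|$ is even, so $|V(H_u)|+|V(H_v)|$ is even, and since at least one of these orders is odd by (1), both must be odd. Neither component therefore carries a perfect matching of its own, so every perfect matching of $G$ is forced to use $e$, the only edge between $H_u$ and $H_v$. Consequently each perfect matching of $G$ has the form $\{e\}\cup M'$ with $M'$ a perfect matching of $G-\{u,v\}$; by (2) such $M'$ is unique (or empty), while existence is witnessed by $\{e\}\cup M^{\ast}$, so $G$ has exactly one perfect matching.

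I do not anticipate a real obstacle in this argument: the only slightly delicate points are remembering to treat the boundary case in which $G-\{u,v\}$ is empty, and noting that the parity step in the reverse direction needs $|V(G)|$ to be even, a fact that condition (2) delivers automatically.
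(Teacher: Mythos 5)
Your argument is correct, and it is essentially the derivation the paper has in mind: the paper states this theorem without proof as an ``immediate consequence'' of the pendant-edge fact and Kotzig's theorem, and your forward direction is exactly the application of Kotzig plus the parity count on the two sides of the bridge, while your backward direction is the standard observation that an odd component forces every perfect matching through the cut edge. The two boundary points you flag (the empty case of $G-\{u,v\}$ and the evenness of $|V(G)|$) are handled correctly, so there is nothing to add.
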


    \begin{cor}
        \label{structure}
        Given a perfect matching $M$ of a graph $G$ and $S\subseteq M.$ If we can reach an empty graph while deleting recursively the ends of one of the following edges from $G-V(S),$ then $S$ is a forcing set of $M,$ and vise versa$:$\\
        $(1)$ pendant edge$;$\\
        $(2)$ cut edge $e$ in some component $H$ such that $H-e$ has an odd component$;$\\
        $(3)$ edge which can be determined to belong to all perfect matchings of $G-V(S).$
    \end{cor}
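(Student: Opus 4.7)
The plan is to leverage the standard equivalence that $S\subseteq M$ is a forcing set of $M$ if and only if $G-V(S)$ is empty or admits $M\setminus S$ as its unique perfect matching. Under this reformulation, the corollary reduces to showing that the recursive deletion procedure terminates at the empty graph precisely when $G-V(S)$ has a unique perfect matching.

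For the sufficiency direction, I would induct on the number of deletion steps. The key observation is that each of the three edge types listed must lie in every perfect matching of the current subgraph: a pendant edge is forced because it is the only edge covering its degree-one endpoint; a cut edge whose removal leaves an odd component is forced by the parity argument underlying Kotzig's theorem (the odd component cannot be matched internally); and type (3) is forced by hypothesis. Consequently, every edge removed during the procedure belongs to every perfect matching of the graph at that stage, so the union of all removed edges is uniquely determined. Since the procedure exhausts $V(G)-V(S)$, this union equals $M\setminus S$, and therefore $M$ is the only perfect matching of $G$ extending $S$.

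For the necessity direction, suppose $G-V(S)$ is nonempty with unique perfect matching $M\setminus S$. I would apply the preceding theorem componentwise: each nonempty component of $G-V(S)$ is connected and has a unique perfect matching, hence contains a cut edge whose removal creates an odd component — precisely an edge of type (2). Deleting the ends of this edge yields a graph each of whose components still has a unique perfect matching (namely the restriction of $M\setminus S$), so the hypotheses persist and the procedure can be iterated. Since each step removes two vertices, it terminates after $\tfrac12|V(G)-V(S)|$ steps with the empty graph.

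The main obstacle I anticipate is largely bookkeeping rather than conceptual: one must carefully track how the recursion interacts with the decomposition of the current graph into components, and verify that the theorem on connected graphs with a unique perfect matching continues to apply to each newly created component after each deletion. The flexibility afforded by types (1) and (3) is useful for the explicit computations later in the paper but is not essential for the existence argument; the logical backbone is Kotzig's theorem together with the parity consequence, which guarantees that a type (2) edge is always available whenever the current graph has a unique perfect matching.
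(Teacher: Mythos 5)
Your proposal is correct and follows the same route the paper intends: the paper states this corollary without proof as an immediate consequence of the quoted Wang--Shang--Yuan theorem (built on Kotzig's cut-edge result), and your argument is exactly the natural fleshing-out of that derivation — each deleted edge of types (1)--(3) is forced in every perfect matching of the current subgraph, giving sufficiency, while the theorem applied componentwise guarantees a type (2) edge always exists when the matching is unique, giving necessity. No gaps.
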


    In the following, we always use Corollary \ref{structure} to test whether an edge subset $S$ is a forcing set of a perfect matching $M$ of a graph $G$. For convenience, we denote the edge whose ends are deleted in $i$-th step by $e_i(G,M,S)$, and $E_i(G,M,S)=\{e_j(G,M,S):j=1,2,\ldots,i\}$ (or briefly, $e_i$ and $E_i$ if there is no ambiguity). For some applications of Corollary \ref{structure}, see Claim \ref{claim11} in Theorem 3.1, Claim \ref{claim33} in Theorem 3.2, Claims \ref{minkk} and \ref{last} in Theorem 4.1.

\section{Forcing number of a perfect matching in $\mathcal{M}_1(P(n))$}
    In this section, we first derive the maximum and minimum forcing numbers of first type of perfect matchings, then prove the continuity. In detail, for $n\ge 11$, the set of forcing numbers of first type of perfect matchings form the integer interval $\left[\lceil \frac { n+2 }{ 6 }  \rceil,\lceil \frac { n }{ 4 }  \rceil\right]$.

\subsection{Maximum value of forcing numbers}
    \begin{thm}
        \label{thm1max}
        The maximum value of forcing numbers of perfect matchings in $\mathcal{M}_1(P(n))$ is $\lceil \frac { n }{ 4 } \rceil$ for $n\ge 9.$
    \end{thm}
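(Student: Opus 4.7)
The plan is to prove the two-sided bound by exhibiting a matching $M^\star\in\mathcal{M}_1(P(n))$ achieving $f(P(n),M^\star)\ge\lceil n/4\rceil$ and then showing that every $M\in\mathcal{M}_1(P(n))$ admits a forcing set of size at most $\lceil n/4\rceil$.

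For the lower bound I would take $M^\star=B^n$, the all-spoke matching, which lies in $\mathcal{M}_1(P(n))$ for every $n$ since the gap between consecutive spokes is zero and hence trivially $\equiv 0\pmod 4$. For each $i\in\mathbb{Z}_n$ the cycle $u_iv_iv_{i+1}u_{i+1}u_{i+3}v_{i+3}v_{i+2}u_{i+2}u_i$ is $M^\star$-alternating, and its only $M^\star$-edges are the four cyclically consecutive spokes $u_jv_j$ with $j\in\{i,i+1,i+2,i+3\}$. A forcing set $S\subseteq M^\star$ must meet each of these $n$ alternating 8-cycles, so the index set $J:=\{j:u_jv_j\in S\}$ must meet every window $\{i,i+1,i+2,i+3\}$ in $\mathbb{Z}_n$; equivalently, the maximal cyclic gap between consecutive elements of $J$ is at most three, which forces $|J|\ge\lceil n/4\rceil$ and hence $f(P(n),M^\star)\ge\lceil n/4\rceil$ via the general inequality $f(G,M)\ge C(G,M)$ noted after Theorem~\ref{lem5}.

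For the upper bound, given $M\in\mathcal{M}_1(P(n))$ expressed as a cyclic word with $4a+b=n$, the target is $a+\lceil b/4\rceil=\lceil n/4\rceil$. I plan to choose one $M$-edge from each of the $a$ structures $A$ (one of the four candidates $u_iu_{i+2},u_{i+1}u_{i+3},v_iv_{i+1},v_{i+2}v_{i+3}$), selecting so that whenever an outer $u$-cycle or the full inner cycle happens to be $M$-alternating it contains at least one chosen edge; then I would pick $\lceil b/4\rceil$ spokes spaced cyclically across the union of $B$-structures so that every four consecutive spokes in $M$ meet $S$. Verification that $S$ forces $M$ proceeds through Corollary~\ref{structure}: after deleting $V(S)$, each $A$-block collapses to a short chain whose remaining $M$-edges peel off one by one as pendant edges, and each maximal surviving segment of spokes between two chosen spokes becomes an odd-length "caterpillar" whose spokes are forced by successive pendant removal at its ends.

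The main technical obstacle I anticipate is the upper-bound construction when $M$ has many short $B$-segments interleaved with $A$-segments, because then the naive per-segment tally $\sum\lceil b_i/4\rceil$ can exceed $\lceil b/4\rceil$. To circumvent this I would use a global cyclic period-4 selection: count $B$-positions cyclically around $P(n)$ as a single sequence and include one spoke for every four, coordinating this with the $a$ chosen $A$-edges so that the total remains $a+\lceil b/4\rceil$. Proving that this global $S$ is still a forcing set reduces to a finite case analysis at the $A/B$ interfaces, which is where most of the work lies. A secondary point, which is precisely why the hypothesis $n\ge 9$ is necessary, is that for $n\le 8$ the outer and inner cycles of $P(n)$ are short enough to yield alternating cycles of length 4 or 8 disjoint from the $A$-internal 8-cycles — for example $M=A^2$ in $P(8)$ has $f=3>\lceil 8/4\rceil=2$ because the two outer 4-cycles and the inner 8-cycle supply three extra disjoint alternating cycles — and one must verify that for $n\ge 9$ these big cycles are long enough that no such extra disjointness beyond the $a$ internal 8-cycles arises.
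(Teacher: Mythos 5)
Your lower bound is essentially the paper's: the matching $B^n$, the $n$ alternating $8$-cycles on four cyclically consecutive spokes, and the window-covering argument giving $|S|\ge\lceil \frac n4\rceil$. One correction there: this is not an instance of $f(G,M)\ge C(G,M)$, since for $n\not\equiv 0\pmod 4$ one cannot find $\lceil \frac n4\rceil$ \emph{disjoint} alternating cycles in $P(n)$ (the paper remarks that $C(P(n),M)\le\lfloor \frac n4\rfloor$ here); the tool your window argument actually uses is Theorem \ref{lem5}, that every $M$-alternating cycle must contain an edge of $S$.

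The genuine gap is in the upper bound. You correctly identify the obstruction (segments $B$ of length $\not\equiv 0\pmod 4$ make the naive tally $a+\sum_i\lceil b_i/4\rceil$ overshoot $a+\lceil b/4\rceil$), but your proposed fix --- a single global period-$4$ selection of spokes, coordinated with one edge per $A$ --- is asserted rather than proved, and the assertion hides the real difficulty. Hitting every $A$-block and every window of four consecutive spokes is only \emph{necessary} for forcing, not sufficient: the paper's own analysis in Theorem \ref{thm1min} (Claims \ref{star} and \ref{alter}, and the cycles in Fig. \ref{namely}) exhibits longer $M$-alternating cycles, spanning several $A$- and $B$-structures, that avoid sets meeting all of these local $8$-cycles. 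So your ``finite case analysis at the $A/B$ interfaces'' is not a routine verification but the heart of the proof. The paper resolves it with a more delicate device: it marks the $AB$-chains whose $B$-segment has length $\equiv 2,3\pmod 4$ alternately $1,0,1,0,\ldots$ and applies two different edge-selection templates (Fig. \ref{algocase}) according to the mark, with separate constructions for $A^{n/4}$, for all-singleton $B$-segments (the $ABA$-chain), and for long $B$-segments (Fig. \ref{algo1}); each template is then certified to be forcing by the pendant-edge peeling of Corollary \ref{structure}. Until you specify your selection at the interfaces precisely and carry out that certification, the upper bound --- and hence the theorem --- is not established.
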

    \begin{proof}
        We divide our proof in two steps. First we find a perfect matching in $\mathcal{M}_1(P(n))$ with forcing number $\lceil \frac { n }{ 4 } \rceil$. Then we prove that the forcing number of each perfect matching in $\mathcal{M}_1(P(n))$ is no more than $\lceil \frac { n }{ 4 } \rceil$.

        (1) Let $M=B^n$ and $S=\{u_{4i}v_{4i}:i=0,1,\ldots, \lceil \frac { n }{ 4 } \rceil -1\}\subseteq M$ (see Fig. \ref{claim1}(a)). Unless stated, we use double lines to denote the edges in a forcing set in the following.

        \begin{clar}
            \label{claim11}
            $S$ is a forcing set of $M$ of $P(n).$
        \end{clar}

        We prove it by Corollary \ref{structure} (1). Since $u_{4i+2}v_{4i+2}$ is a pendant edge of $P(n)-\{u_{4i},u_{4i+4}\}$, we can determine $E_{\lceil \frac { n }{ 4 } \rceil -1}=\{u_{4i+2}v_{4i+2}:i=0,1,\ldots, \lceil \frac { n }{ 4 } \rceil -2\}$. Since $u_{4i+1}v_{4i+1}$ and $u_{4i+3}v_{4i+3}$ are pendant edges of $P(n)-\{v_{4i},v_{4i+2},v_{4i+4}\}$, we can determine $E_{3\lceil \frac { n }{ 4 } \rceil -3}=E_{\lceil \frac { n }{ 4 } \rceil -1}\cup\{u_{4i+1}v_{4i+1},u_{4i+3}v_{4i+3}:i=0,1,\ldots, \lceil \frac { n }{ 4 } \rceil -2\}$. For the resulting graph $G-V(S)-V(E_{3\lceil \frac { n }{ 4 } \rceil -3})$, by a similar argument as above, we could reach an empty graph. Then $S$ is a forcing set of $M$.

        Suppose there is a forcing set $S_0$ of $M$ with cardinality less than $\lceil \frac { n }{ 4 } \rceil$. Then there are four continuous spokes $u_iv_i,$ $u_{i+1}v_{i+1},$ $u_{i+2}v_{i+2},$ $u_{i+3}v_{i+3}$ in $M$ but not in $S_0$. Hence an $M$-alternating cycle $u_iu_{i+2}v_{i+2}v_{i+3}u_{i+3}u_{i+1}v_{i+1}v_iu_i$ illustrated with dotted cycle in Fig. \ref{claim1}(b) contains no edges of $S_0$, a contradiction to Theorem \ref{lem5}.

        \begin{figure}[ht]
            \centering
            \includegraphics[height=1.35in]{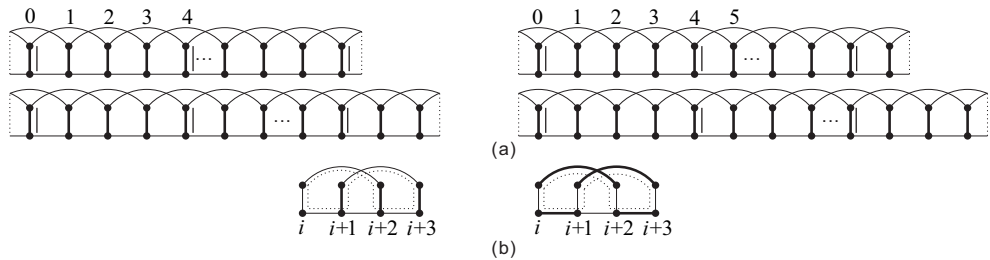}
            \caption{Perfect matching which achieves the upper bound of Theorem \ref{thm1max}.}
            \label{claim1}
        \end{figure}

        (2) Next we prove that for each $M\in \mathcal{M}_1(P(n))$, there is a forcing set of $M$ with cardinality no more than $\lceil \frac { n }{ 4 }\rceil$. From the above discussions, we assume that $M$ can be expressed by a sequence of at least one $A$. We now consider the  following cases of $M$.

        \textbf{Case 1.} There are no segments $B$ with 2 or 3 (mod 4) $B$ in $P(n)$.

        \textbf{Case 1.1.} There are no segment $B$ in $P(n)$. Then $n\equiv 0$ (mod 4) and $M=A^{\frac n 4}$. Similar to the proof above, we can confirm that the edge subset in Fig. \ref{algo1}(a) with cardinality $\frac { n }{ 4 }$ is a minimum forcing set of $M$.

        \textbf{Case 1.2.} There is a segment $B$ with at least four $B$ in $P(n)$. Then for each $AB$-chain $W_j$, we give the edge subset $S_j$ in Fig. \ref{algo1}(b). Similar to the proof of Claim \ref{claim11}, we can confirm that $\cup_{j} S_j$ with cardinality no more than $\lceil \frac { n }{ 4 } \rceil$ is a forcing set of $M$.

        \textbf{Case 1.3.} Each segment $B$ has precisely one $B$ in $P(n)$. Then there exits a chain $ABA$ (see Fig. \ref{algo1}(c)), denoted by $W$. First for $W$, we give the edge subset $S_0$ in Fig. \ref{algo1}(c). Then in turn for other $j$-th chain $A$, we give the edge subset $S_j$ in Fig. \ref{algo1}(d). Similar to the proof of Claim \ref{claim11}, we can confirm that $\cup_jS_j\cup S_0$ with cardinality no more than $\lceil \frac { n }{ 4 } \rceil$ is a forcing set of $M$.

        \begin{figure}[htbp]
            \centering
            \includegraphics[height=1.71in]{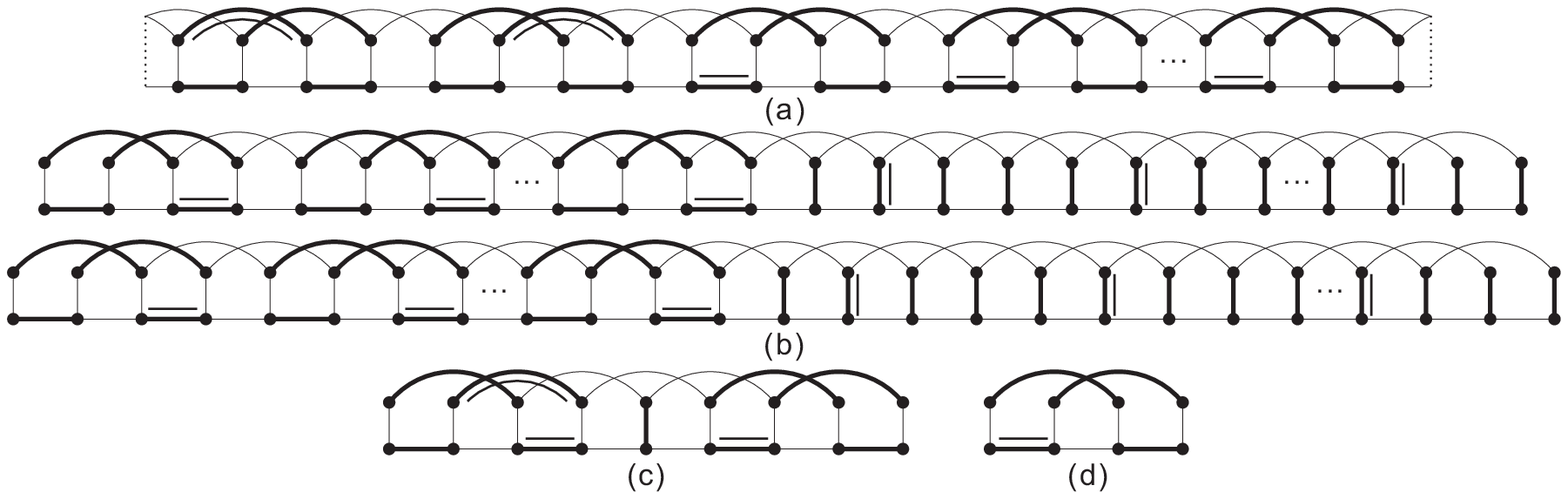}
            \caption{Illustration of Case 1 in the proof of Theorem \ref{thm1max}.}
            \label{algo1}
        \end{figure}

        \textbf{Case 2.} There is a segment $B$ with 2 or 3 (mod 4) $B$ in $P(n)$.

        We first pick an $AB$-chain with 2 or 3 (mod 4) $B$ and mark it with 1, then in turn mark the other such $AB$-chains with 2 or 3 (mod 4) $B$ alternatively with 0 and 1 from left to right. Namely, all such $AB$-chains are marked with $1010\cdots10$ if the total number is even, and marked with $1010\cdots 101$ otherwise. In turn for $j$-th $AB$-chain $W_j$, if it is either marked with 1, or unmarked with the immediate right-hand marked $AB$-chain marked with 1, then we give the edge subset $S_j$ in Fig. \ref{algocase}(a); otherwise, we give the edge subset $S_j$ in Fig. \ref{algocase}(b). Similar to the proof of Claim \ref{claim11}, we can confirm that $\cup_{j} S_j$ with cardinality no more than $\lceil \frac { n }{ 4 } \rceil$ is a forcing set of $M$.
    \end{proof}
    \begin{figure}[htbp]
        \centering
        \includegraphics[height=3.15in]{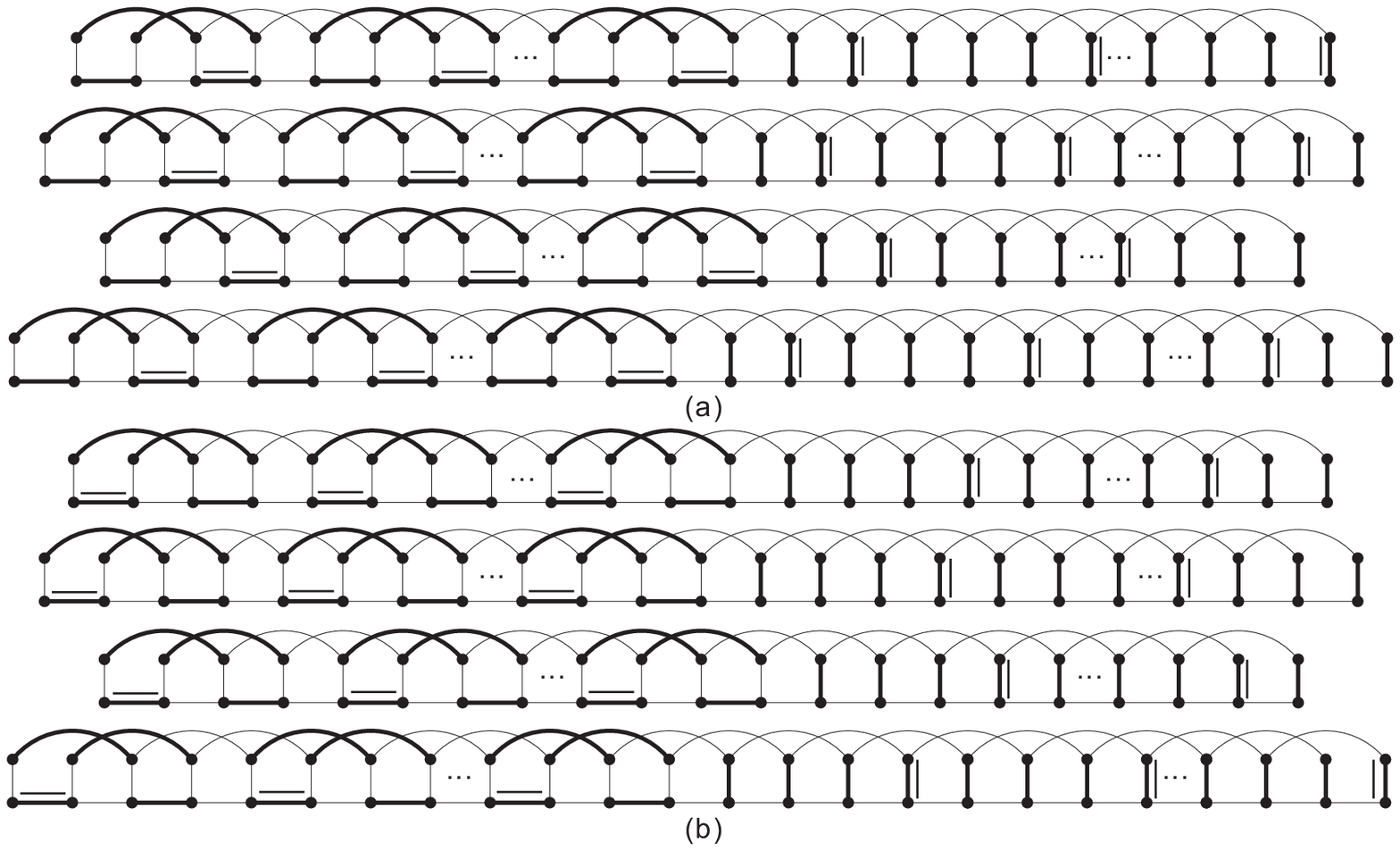}
        \caption{Illustration of Case 2 in the proof of Theorem \ref{thm1max}.}
        \label{algocase}
    \end{figure}

     Note that the above result dose not hold for $n=8$, and Theorem \ref{lemmima} fails in the non-bipartite graph $P(n)$. In fact, for a perfect matching $M$ in $\mathcal{M}_1(P(n))$ with $f(P(n),M)=\lceil \frac { n }{ 4 }\rceil$, we have $C(P(n),M)\le \lfloor \frac { n }{ 4 } \rfloor$, since the length of a shortest even cycle of $P(n)$ is 8.

\subsection{Minimum value of forcing numbers}
    \begin{thm}
        \label{thm1min}
        The minimum value of forcing numbers of perfect matchings in $\mathcal{M}_1(P(n))$ is $\lceil \frac { n+2 }{ 6 } \rceil$ for $n\ge 11.$
    \end{thm}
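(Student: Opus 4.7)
The plan is to establish the theorem in two halves: an upper bound achieved by an explicit construction, and a matching lower bound obtained by a structural argument.

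For the upper bound, the aim is to exhibit $M^{*}\in\mathcal{M}_1(P(n))$ together with a subset $S^{*}\subseteq M^{*}$ of size $\lceil\frac{n+2}{6}\rceil$, and verify via Corollary \ref{structure} that $S^{*}$ forces $M^{*}$. The natural periodic block is the chain $AB^2$ of length $6$: for $n=6k$ take $M^{*}=(AB^2)^k$; for $n=6k+r$ with $1\le r\le 4$ absorb the surplus columns into one $B$-segment, e.g.\ $M^{*}=(AB^2)^{k-1}AB^{r+2}$, which remains in $\mathcal{M}_1(P(n))$ since every spoke-gap is a multiple of $4$; for $n=6k+5$ use a slightly enlarged pattern so as to reach $k+2=\lceil\frac{n+2}{6}\rceil$. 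The set $S^{*}$ will consist of one carefully placed matching edge per $AB^2$-period (typically a $u$-rung of an $A$-block, so that pendant-deletion propagates) together with one extra edge breaking a global wrap-around alternating cycle. Verification repeats the Corollary \ref{structure} procedure used in Claim \ref{claim11}: in $P(n)-V(S^{*})$ each removal isolates pendant edges, after which the residual graph acyclically determines the remaining matching edges.

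For the lower bound I plan to show $f(P(n),M)\ge\lceil\frac{n+2}{6}\rceil$ for every $M\in\mathcal{M}_1(P(n))$; equivalently (Theorem \ref{lem5}), any $S\subseteq M$ with $|S|<\lceil\frac{n+2}{6}\rceil$ misses some $M$-alternating cycle of $P(n)$. Writing $M$ as a cyclic word in $\{A,B\}$ with $4a+b=n$, one immediately obtains a canonical $8$-cycle inside each $A$-block and inside every $B^4$-block. The plan is to enlarge this family with $12$-cycles that zigzag through two consecutive $AB^2$-windows, each using the two $u$-rungs of one $A$-block, the spokes of the two adjacent $B^2$-blocks, and outer edges $u_iu_{i+2}$ bridging them (as a concrete instance, $v_4u_4u_6u_8u_{10}v_{10}v_{11}u_{11}u_9u_7u_5v_5v_4$ in $P(12)$ for $M=(AB^2)^2$). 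A window/discharging argument along the cyclic word should then show that any single edge of $M$ can cover alternating cycles over at most six consecutive columns, giving $|S|\ge\lceil\frac{n}{6}\rceil$, while the closed cyclic structure leaves a further unavoidable alternating cycle that pushes the bound up to $\lceil\frac{n+2}{6}\rceil$.

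The hardest case will be matchings with few $A$'s and long $B$-segments. Here the easy bound $|S|\ge a$ is too weak, and because $P(n)$ is non-bipartite Theorem \ref{lemmima} fails, so $f(P(n),M)$ can strictly exceed the maximum disjoint alternating-cycle count $C(P(n),M)$; pure cycle packing will not suffice and a finer argument is needed. I expect to handle this regime via a case analysis on $n\bmod 6$, with the classes $n\equiv 0,5\pmod{6}$ producing the extra ``$+1$'' that raises $\lceil\frac{n}{6}\rceil$ to $\lceil\frac{n+2}{6}\rceil$, in direct analogy with the $\delta(n)$ correction appearing in the companion formula for $\mathcal{M}_2(P(n))$ stated in the abstract.
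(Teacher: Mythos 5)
There is a genuine gap, and it is fatal to the upper-bound half: the periodic pattern you chose minimizes nothing — it is an extremal matching for the \emph{maximum} forcing number. For $M^{*}=(AB^{2})^{k}$ with $n=6k$, every $A$-block contains the $M^{*}$-alternating $8$-cycle $u_iu_{i+2}v_{i+2}v_{i+3}u_{i+3}u_{i+1}v_{i+1}v_iu_i$, so by Theorem \ref{lem5} each of the $k$ periods must contain at least one edge of any forcing set; with only $k+1$ edges, at most one period contains two, hence (for $k\ge 3$) some two \emph{adjacent} periods each contain exactly one. A case analysis on where that single edge sits inside such an $A$-block (it is $u_iu_{i+2}$, $u_{i+1}u_{i+3}$, $v_iv_{i+1}$ or $v_{i+2}v_{i+3}$, or a neighbouring spoke) always leaves an uncovered $M^{*}$-alternating $10$- or $12$-cycle threading the two $A$-blocks through the short $B^{2}$ separator — essentially the very $12$-cycle $v_4u_4u_6u_8u_{10}v_{10}v_{11}u_{11}u_9u_7u_5v_5v_4$ you exhibit, together with its variants. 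This is exactly Claim \ref{alter}(1) of the paper (no two consecutive tight $ABB$-chains), and it forces $f(P(n),(AB^{2})^{k})=\lceil 3k/2\rceil=\lceil n/4\rceil$, the maximum of Theorem \ref{thm1max}, not $k+1$. Table \ref{tab} confirms this concretely: for $n=18$ the first-type polynomial is $61x^{5}+270x^{4}$, so the minimum is $4=\lceil 20/6\rceil$ while your $(AB^{2})^{3}$ would need $5$. The correct minimizing pattern must keep the $A$-blocks far apart: the paper uses the period-$12$ block $B^{3}ABA$ (equivalently $ABAB^{3}$), which carries two forcing-set edges per $12$ columns, with twelve boundary adjustments according to $n\bmod 12$.

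The lower-bound half is closer in spirit to the paper (the paper's potential $\alpha(W_i)=6|S_0\cap E(W_i)|-|M\cap E(W_i)|$ is precisely a discharging at rate $6$ per forcing edge), but your sketch omits the two hard ingredients. First, the heuristic ``each edge covers at most six consecutive columns'' is not a proof; what one actually shows is $\alpha(W_i)\ge 2a_i+6\lfloor b_i/4\rfloor-b_i\ge -1$ per $AB$-chain, with deficit $-1$ only for tight $ABBB$ and deficit $0$ only for tight $ABB$. Second, and more seriously, ruling out total deficit $\le 1$ requires forbidding specific adjacent pairs of tight chains (Claim \ref{alter}) and then, for the surviving near-extremal configurations such as $(ABABBB)^{n/12}$, pinning down $S_0$ completely (Claim \ref{star}) and exhibiting one final uncovered alternating cycle. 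None of this follows from the $8$-cycle packing plus a parity remark on $n\bmod 6$; as you note yourself, cycle packing alone cannot reach the bound since $P(n)$ is non-bipartite. As written, neither half of the proposal closes.
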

    \begin{proof}
        We divide our proof in two steps. First we find a perfect matching in $\mathcal{M}_1(P(n))$ with forcing number no more than $\lceil \frac { n+2 }{ 6 } \rceil$. Then we prove that the forcing number of each perfect matching in $\mathcal{M}_1(P(n))$ is no less than $\lceil \frac { n+2 }{ 6 } \rceil$.

        (1) Now we give a perfect matching $M$ of $P(n)$ expressed by
        \begin{align*}
            \left\{ \begin{array} {ll} BBBAA(BBBABA)^{\lfloor\frac {n-11}{12}\rfloor} & \makebox{if}~n\equiv 11~(\text{mod}~12), \\  BBBBAA(BBBABA)^{\lfloor\frac {n-11}{12}\rfloor} & \makebox{if}~n\equiv 0~(\text{mod}~12), \\ BBBBBAA(BBBABA)^{\lfloor\frac {n-11}{12}\rfloor} & \makebox{if}~n\equiv 1~(\text{mod}~12), \\ BBBBBBAA(BBBABA)^{\lfloor\frac {n-11}{12}\rfloor} & \makebox{if}~n\equiv 2~(\text{mod}~12), \\ BABABA(BBBABA)^{\lfloor\frac {n-11}{12}\rfloor} & \makebox{if}~n\equiv 3~(\text{mod}~12), \\ BBBBBBBABA(BBBABA)^{\lfloor\frac {n-11}{12}\rfloor} & \makebox{if}~n\equiv 4~(\text{mod}~12), \\ BBBBBAAA(BBBABA)^{\lfloor\frac {n-11}{12}\rfloor} & \makebox{if}~n\equiv 5~(\text{mod}~12), \\ BBBBBBAAA(BBBABA)^{\lfloor\frac {n-11}{12}\rfloor} & \makebox{if}~n\equiv 6~(\text{mod}~12), \\ AABBBAA(BBBABA)^{\lfloor\frac {n-11}{12}\rfloor} & \makebox{if}~n\equiv 7~(\text{mod}~12), \\  BAABBBAA(BBBABA)^{\lfloor\frac {n-11}{12}\rfloor} & \makebox{if}~n\equiv 8~(\text{mod}~12), \\ BBAABBBAA(BBBABA)^{\lfloor\frac {n-11}{12}\rfloor} & \makebox{if}~n\equiv 9~(\text{mod}~12), \\  BBBAABBBAA(BBBABA)^{\lfloor\frac {n-11}{12}\rfloor} & \makebox{if}~n\equiv 10~(\text{mod}~12).
            \end{array} \right.
        \end{align*}
        For the above sequences, we show the initial part in Fig. \ref{lower}(a), and the repeating part in Fig. \ref{lower}(b).

        \begin{figure}[ht]
            \centering
            \includegraphics[height=5.2in]{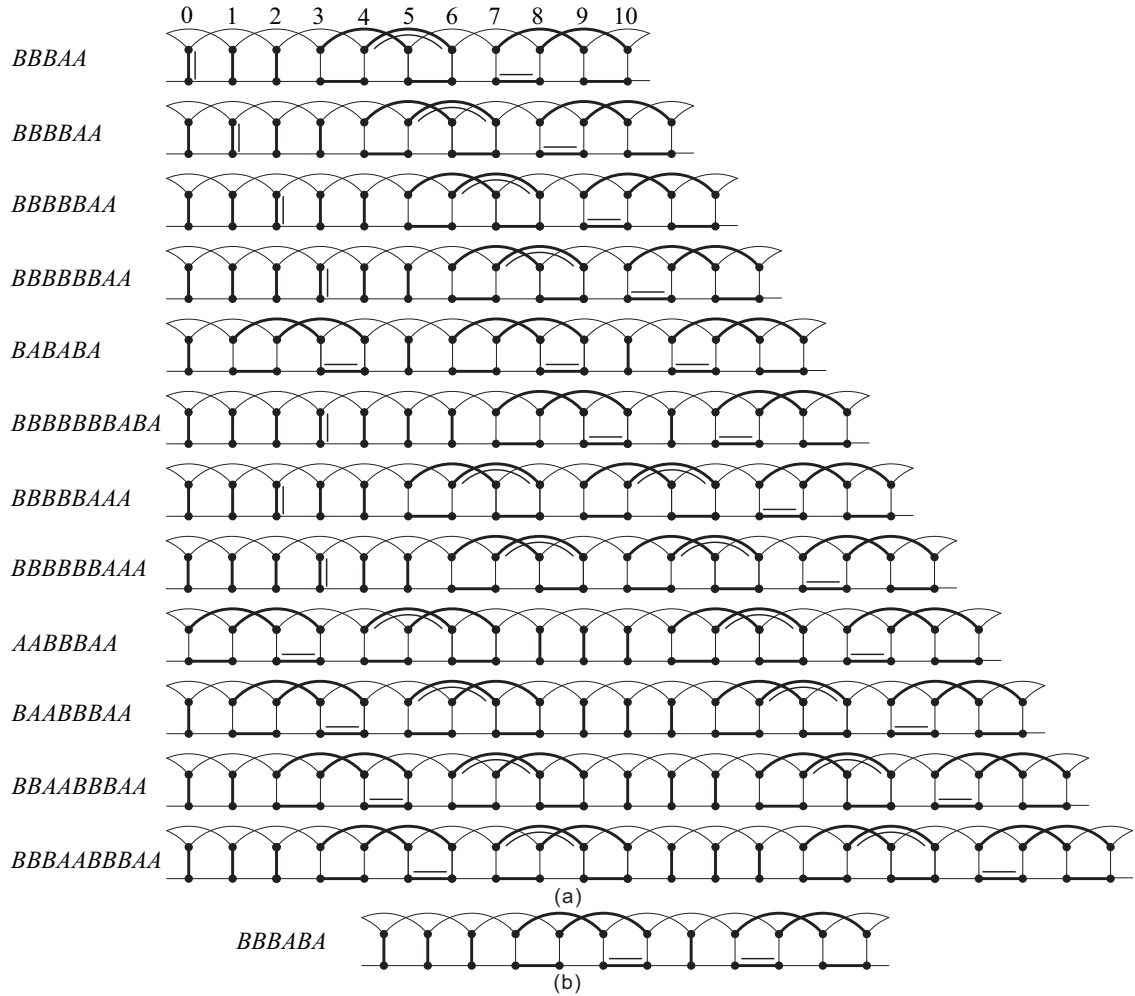}
            \caption{Perfect matching which achieves the lower bound of Theorem \ref{thm1min}.}
            \label{lower}
        \end{figure}

        \begin{cla}
            \label{claim22}
            Let $R$ be a subset of $V(P(n))$ with $u_i,v_{i+1},v_{i+5}\in R$ and $u_{i+2},v_{i+2},v_{i+3},u_{i+4},v_{i+4}$\\$\notin R.$ Then $u_{i+2}v_{i+2}$ belongs to all perfect matchings of $P(n)-R$ if there exists one$.$
        \end{cla}

        We illustrate the labels in Fig. \ref{lowerww}. Because $u_{i+2}$, $v_{i+2}$ and $v_{i+4}$ are odd components of $P(n)-R-u_{i+2}v_{i+2}-\{v_{i+3},u_{i+4}\}$, $P(n)-R-u_{i+2}v_{i+2}$ has no perfect matchings by Tutte's 1-factor Theorem. So the claim holds.

        \begin{figure}[htbp]
            \centering
            \includegraphics[height=0.45in]{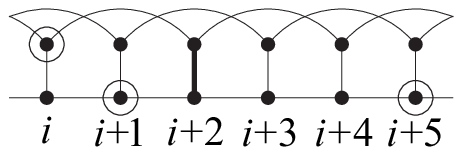}
            \caption{Illustration of Claim \ref{claim22} in the proof of Theorem \ref{thm1min}.}
            \label{lowerww}
        \end{figure}

        Let $S\subseteq M$ illustrated in Fig. \ref{lower} with cardinality $\lceil \frac { n+2 }{ 6 } \rceil$.

        \begin{cla}
            \label{claim33}
            $S$ is a forcing set of $M$ of $P(n).$
        \end{cla}

        We prove it by Corollary \ref{structure} (1) and (3) and Claim \ref{claim22}. Here we consider the case of $n\equiv 11$ (mod 12), and the other cases are similar. By a similar argument to Claim \ref{claim11} in Theorem \ref{thm1max}, we have $E_3=\{u_2v_2,v_5v_6,u_8u_{10}\}$, $E_5=E_3\cup\{u_1v_1,v_3v_4\}$, $E_6=E_5\cup\{u_3u_5\}$, $E_7=E_6\cup\{u_7u_9\}$, $E_8=E_7\cup\{v_9v_{10}\}$.

        If $n=11$, then $P(n)-V(S)-V(E_8)$ is empty. If $n=23$, then $E_9=E_8\cup\{u_{18}v_{18}\}$, $E_{11}=E_9\cup\{u_{14}u_{16},u_{20}u_{22}\}$, $E_{12}=E_{11}\cup\{u_{12}v_{12}\}$, $E_{13}=E_{12}\cup\{u_{11}v_{11}\}$. By Claim \ref{claim22}, $u_{13}v_{13}$ belongs to all perfect matchings of $P(n)-V(S)-V(E_{13})$. Then $E_{14}=E_{13}\cup\{u_{13}v_{13}\}$. For the resulting graph, $E_{15}=E_{14}\cup\{v_{14}v_{15}\}$, $E_{16}=E_{15}\cup\{u_{15}u_{17}\}$, $E_{17}=E_{16}\cup\{u_{19}u_{21}\}$, $E_{18}=E_{17}\cup\{v_{21}v_{22}\}$, and $P(n)-V(S)-V(E_{18})$ is empty. If $n\ge 35$, clearly we may continue to find new edges as stated in Corollary \ref{structure} until reaching an empty graph. Hence $S$ is a forcing set of $M$.

        (2) Next we prove that for each $M\in \mathcal{M}_1(P(n))$, we have $f(P(n),M)\ge \lceil \frac {n+2}{6} \rceil$. The initial cases of $11\le n\le 34$ can be verified from Table \ref{tab}. From now on suppose $n\ge 35$. To the contrary, suppose that $\mathcal{M}_1(P(n))$ has a perfect matching $M$ with a forcing set $S_0$ such that $|S_0|<\lceil \frac {n+2}{6} \rceil$. That is, $6|S_0|-n\le 1$. From the proof of Theorem \ref{thm1max}, we know that $M$ can be expressed by a sequence of at least one $A$ and at least one $B$.

        Let us consider $P(n)$ with perfect matching $M$ as follows. Given a chain decomposition $W_1,W_2,\ldots,W_m$ ($m\ge 1$), let
        \[\alpha (W_i)=6|S_0\cap E(W_i)|-|M\cap E(W_i)|\]
        for $i=1,2,\ldots,m$. Then
        \begin{align}
            \label{thmmin}
            \alpha (P(n)):=\sum_{i=1}^m\alpha (W_i)=6|S_0|-n\le 1.
        \end{align}

        For each $AB$-chain $W_i$, let $a_i$ and $b_i$ be the number of $A$ and $B$ in $W_i$, respectively. Then $a_i,b_i\ge 1$. Since chains $A$ and $BBBB$ each contains an $M$-alternating 8-cycle (see Fig. \ref{claim1}(b)) and thus at least one edge of $S_0$ by Theorem \ref{lem5}. This implies $|S_0\cap E(W_i)|\ge {a_i}+\lfloor\frac {b_i} 4 \rfloor$. If equality holds, then we say $W_i$ is a \emph{tight} $AB$-chain. Combining $|M\cap E(W_i)|=4a_i+b_i$, we have
        \begin{align}
            \label{eq2}
            \alpha (W_i)\ge 2a_i+6\lfloor\frac {b_i} 4 \rfloor-b_i.
        \end{align}

        \begin{cla}
            \label{star}
            If each chain $A$ in chain $BBABABB$ as $P(n)[\{u_j,v_j:j=i,i+1,\ldots,i+12\}]$ has precisely one edge in $S_0,$ then $v_{i+4}v_{i+5},v_{i+7}v_{i+8}\in S_0.$
        \end{cla}

        Suppose $v_{i+4}v_{i+5}\notin S_0$. Hence an $M$-alternating cycle $u_{i+1}u_{i+3}u_{i+5}v_{i+5}v_{i+4}v_{i+3}v_{i+2}v_{i+1}$\\$u_{i+1}$ contains no edges of $S_0$ if $u_{i+2}u_{i+4}\in S_0$, an $M$-alternating cycle $u_{i+2}u_{i+4}u_{i+6}v_{i+6}v_{i+5}$\\$v_{i+4}v_{i+3}v_{i+2}u_{i+2}$ contains no edges of $S_0$ if $u_{i+3}u_{i+5}\in S_0$, and an $M$-alternating cycle $u_iu_{i+2}u_{i+4}v_{i+4}v_{i+5}u_{i+5}u_{i+3}u_{i+1}v_{i+1}v_iu_i$ contains no edges of $S_0$ if $v_{i+2}v_{i+3}\in S_0$, a contradiction. Similarly, we have $v_{i+7}v_{i+8}\in S_0$.

        \begin{cla}
            \label{alter}
            There are no the following two continuous tight $AB$-chains$:$ $(1)$ $ABB$ and $ABB,$ $(2)$ $ABBB$ and $ABBB,$ $(3)$ $ABB$ and $ABBB,$ $(4)$ $ABBB$ and $ABB,$ $(5)$ $ABBB$ and $ABBBBBBB,$ $(6)$ $ABBBBBBB$ and $ABBB,$ $(7)$ $ABBBBBB$ and $ABBB,$ $(8)$ $AABBB$ and $ABBB,$ $(9)$ $AABB$ and $ABBB,$ $(10)$ $AABBB$ and $ABB.$
        \end{cla}

        Suppose there are two continuous tight $AB$-chains $ABB$ as $P(n)[\{u_j,v_j:j=i-4,i-3,\ldots,i+7\}]$. Hence an $M$-alternating cycle $u_{i+1}u_{i+3}u_{i+5}v_{i+5}v_{i+4}v_{i+3}v_{i+2}v_{i+1}u_{i+1}$ contains no edges of $S_0$ if $u_{i+2}u_{i+4}\in S_0$, an $M$-alternating cycle $u_{i+2}u_{i+4}u_{i+6}v_{i+6}v_{i+5}v_{i+4}v_{i+3}v_{i+2}$\\$u_{i+2}$ contains no edges of $S_0$ if $u_{i+3}u_{i+5}\in S_0$, and an $M$-alternating cycle $u_iu_{i+2}u_{i+4}u_{i+6}$\\$v_{i+6}v_{i+7}u_{i+7}u_{i+5}u_{i+3}u_{i+1}v_{i+1}v_iu_i$ contains no edges of $S_0$ if $v_{i+2}v_{i+3}$ or $v_{i+4}v_{i+5}\in S_0$, a contradiction. The other conclusions can be shown by a similar method.

        \begin{cla}
            There must exist a tight $AB$-chain $ABB$ or $ABBB$ in $P(n)$.
        \end{cla}

        For each $AB$-chain $W_i$, denote $b_i=4r_i+\varepsilon_i$ ($r_i\ge 0$, $0\le \varepsilon_i\le 3$). By Eq. (\ref{eq2}), we have
        \[\alpha (W_i)\ge 2a_i+2r_i-\varepsilon_i\ge -1.\]
        If $W_i$ is not tight, then $\alpha (W_i)\ge 2a_i+2r_i-\varepsilon_i+6\ge 5$.

        Furthermore, $\alpha (W_i)=-1$ if and only if $a_i=1$, $r_i=0$ and $\varepsilon_i=3$ ($W_i$ is tight chain $ABBB$); $\alpha (W_i)=0$ if and only if $a_i=1$, $r_i=0$ and $\varepsilon_i=2$ ($W_i$ is tight chain $ABB$); $\alpha (W_i)=1$ if and only if either $a_i=1$, $r_i=0$, $\varepsilon_i=1$ ($W_i$ is tight chain $AB$), $a_i=1$, $r_i=1$, $\varepsilon_i=3$ ($W_i$ is tight chain $ABBBBBBB$), or $a_i=2$, $r_i=0$, $\varepsilon_i=3$ ($W_i$ is tight chain $AABBB$); $\alpha (W_i)=2$ if and only if either $a_i=1$, $r_i=1$, $\varepsilon_i=2$ ($W_i$ is tight chain $ABBBBBB$), or $a_i=2$, $r_i=0$, $\varepsilon_i=2$ ($W_i$ is tight chain $AABB$). It follows that if there are no tight $AB$-chains $ABB$ or $ABBB$ in $P(n)$, then $\alpha (P(n))\ge 2$ by $n\ge 35$, a contradiction to Eq. (\ref{thmmin}).

        Suppose there are no tight $AB$-chains $ABBB$. In turn we denote the tight $AB$-chains $ABB$ by $U_1,U_2,\ldots,U_l$ ($l\ge 1$). Then for the chain $V_i$ between two consecutive tight $AB$-chains $ABB$ $U_i$ and $U_{i+1}$ (the subscripts module $l$), we have $\alpha(V_i)\ge 1$ by Claim \ref{alter} (1), and equality holds if and only if $V_i$ is tight $AB$-chain $AB$, $ABBBBBBB$ or $AABBB$. It follows by $n\ge 35$ that $\alpha (P(n))=\sum_{i=1}^l \alpha(V_i)\ge 2$, a contradiction to Eq. (\ref{thmmin}).

        Suppose there are $l(\ge 1)$ tight $AB$-chains $ABBB$. In turn we denote the tight $AB$-chains $ABBB$ by $U_1,U_2,\ldots,U_l$. Then for the chain $V_i$ between two consecutive tight $AB$-chains $ABBB$ $U_i$ and $U_{i+1}$ (the subscripts module $l$), we have $\alpha(V_i)\ge 1$ by Claim \ref{alter} (2-4) and $\alpha(V_i)\le 2$ by Eq. (\ref{thmmin}). Hence we can consider the following two cases.

        \textbf{Case 1.} $\alpha(V_i)= 1$ for each $1\le i\le l$. Then each $V_i$ is tight $AB$-chain $AB$, $ABBBBBBB$ or $AABBB$. From Claim \ref{alter} (6) and (8), it follows that $V_i$ is chain $AB$. Then $M=(ABABBB)^{\frac n {12}}$ and every $AB$-chain is tight. By Claim \ref{star}, we can completely determine $S_0$ illustrated in Fig. \ref{namely}(a). However, an $M$-alternating cycle in Fig. \ref{namely}(a) contains no edges of $S_0$, a contradiction.

        \textbf{Case 2.} There is a chain $V_j$ with $\alpha(V_j)=2$. Then for all the others $V_i$ ($i\neq j$), we have $\alpha(V_i)= 1$ by Eq. (\ref{thmmin}). Furthermore, the number of $AB$-chains that contained in $V_j$ is more than one by Claim \ref{alter} (7) and (9), and less than four by Claim \ref{alter} (1), (3) and (4). From Claim \ref{alter} (3-6), (8) and (10), it follows that each $V_i$ is chain $AB$ and $V_j$ should be precisely one of the following cases.

        \textbf{Case 2.1.} $V_j$ is chain $ABABBAB$. Then $M=ABABB(ABABBB)^{\frac{n-11}{12}}$ and each $AB$-chain is tight. By Claim \ref{star}, we can completely determine $S_0$ illustrated in Fig. \ref{namely}(b). Hence an $M$-alternating cycle in Fig. \ref{namely}(b) contains no edges of $S_0$, a contradiction.

        \textbf{Case 2.2.} $V_j$ is chain $ABAB$. Then $M=AB(ABABBB)^{\frac{n-5}{12}}$ and each $AB$-chain is tight. Similar to the proof of Claim \ref{star}, we can confirm that the edge of the first, second and third chains $A$ in $S_0$ are $v_2v_3$, $v_5v_6$ or $v_7v_8$, and $v_{10}v_{11}$, respectively. By Claim \ref{star}, we can determine $S_0$. Hence an $M$-alternating cycle $C_{2,2}$ in Fig. \ref{namely}(c) contains no edges of $S_0$ if $v_5v_6\in S_0$, and an $M$-alternating cycle obtained from $C_{2,2}$ by switching $u_{n-5}v_{n-5}v_{n-4}v_{n-3}\cdots v_{15}v_{16}u_{16}u_{18}$ to $u_{n-5}u_{n-3}v_{n-3}v_{n-2}u_{n-2}u_0u_2u_4v_4v_5v_6u_6u_8u_{10}u_{12}v_{12}v_{13}$\\$v_{14}u_{14}u_{16}v_{16}v_{17}v_{18}u_{18}$ contains no edges of $S_0$ if $v_7v_8\in S_0$, a contradiction.

        \textbf{Case 2.3.} $V_j$ is chain $AABBBAB$. Then $M=AABBB(ABABBB)^{\frac{n-11}{12}}$ and each $AB$-chain is tight. Similar to the proof of Claim \ref{star}, we can confirm that the first chain $AA$ has two edges either $v_2v_3$ and $u_4u_6$, $u_1u_3$ and $v_4v_5$, or $u_1u_3$ and $u_4u_6$ in $S_0$. By Claim \ref{star}, we can determine $S_0$. Hence an $M$-alternating cycle $C_{2,3}$ in Fig. \ref{namely}(d) contains no edges of $S_0$ if $v_2v_3,u_4u_6\in S_0$, an $M$-alternating cycle obtained from $C_{2,3}$ by switching $v_{n-1}v_{0}v_1u_1\cdots v_9v_{10}u_{10}u_{12}$ to $v_{n-1}v_{n-2}u_{n-2}u_0u_2u_4u_6v_6v_7v_8u_8u_{10}v_{10}v_{11}v_{12}u_{12}$ contains no edges of $S_0$ if $u_1u_3,v_4v_5\in S_0$, and an $M$-alternating cycle obtained from $C_{2,3}$ by switching $v_1u_1u_3u_5\cdots v_9v_{10}u_{10}u_{12}$ to $v_1v_2v_3v_4v_5v_6v_7v_8u_8u_{10}v_{10}v_{11}v_{12}u_{12}$ contains no edges of $S_0$ if $u_1u_3,u_4u_6\in S_0$, a contradiction.
    \end{proof}
    \begin{figure}[htbp]
        \centering
        \includegraphics[height=1.47in]{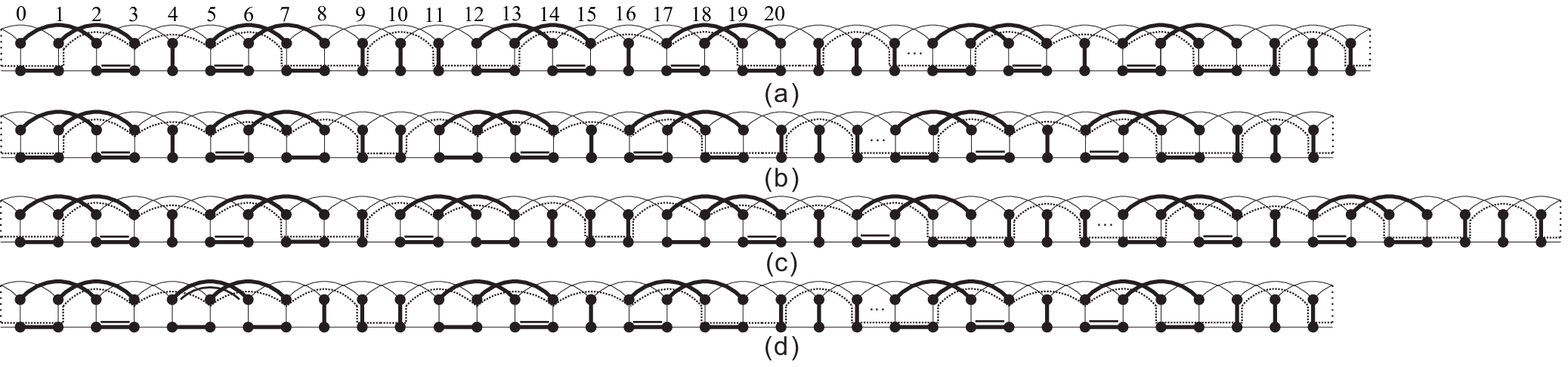}
        \caption{Illustration of Cases 1 and 2 in the proof of Theorem \ref{thm1min}.}
        \label{namely}
    \end{figure}

    Note that the above result does not hold for $n=10$.

\subsection{Continuity}
    \begin{thm}
        \label{thm1con}
        For $n\ge 3,$ $\{f(P(n),M): M \in \mathcal{M}_1(P(n))\}$ is continuous$.$
    \end{thm}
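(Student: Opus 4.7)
The plan is to show that for every $n \ge 3$ and every integer $k$ with $\lceil (n+2)/6 \rceil \le k \le \lceil n/4 \rceil$, some matching $M_k \in \mathcal{M}_1(P(n))$ has $f(P(n), M_k) = k$. For $3 \le n \le 34$ this follows directly from the entries of Table \ref{tab}, so I assume $n \ge 35$ from here on. By Theorems \ref{thm1max} and \ref{thm1min} the two endpoints of the interval are already realized by a matching $M_{\max}$ (based on $B^n$) and a matching $M_{\min}$ (based on the periodic pattern $BBBABA$), so it suffices to build a chain of matchings whose forcing numbers step down from $\lceil n/4 \rceil$ to $\lceil (n+2)/6 \rceil$ in unit increments.

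The local modification I plan to use is the swap $B^{12} \leftrightarrow BBBABA$. Both blocks occupy the same $12$ consecutive positions of $P(n)$ and both preserve membership in $\mathcal{M}_1(P(n))$ (all gaps between consecutive spokes stay $0 \pmod 4$). A direct count using the lower bound $\sum_i(a_i + \lfloor b_i/4\rfloor)$ from the proof of Theorem \ref{thm1min} shows that $B^{12}$ contributes $3$ while $BBBABA$ contributes $2$ to the forcing number. Starting from a matching of the form $B^{12t}\cdot R$, where $R$ is a short residue block determined by $n \pmod{12}$ (chosen exactly as in the case analysis of Theorem \ref{thm1max}), I would replace one $B^{12}$-window at a time with $BBBABA$; after $j$ replacements the forcing number has dropped by $j$, giving matchings $M_{\lceil n/4 \rceil},\, M_{\lceil n/4\rceil-1},\, \ldots$. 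When the supply of $B^{12}$-windows runs out, the matching coincides (up to $R$) with the one constructed in Theorem \ref{thm1min}, so the terminal forcing number is $\lceil (n+2)/6\rceil$.

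For each intermediate matching $M_k$ I would verify $f(P(n),M_k) = k$ in two directions. The lower bound follows from Theorem \ref{lem5} applied to the disjoint family of $M_k$-alternating $8$-cycles obtained by placing one cycle inside each $A$-block and one inside each $BBBB$-subrun of every $B$-segment; by design of the swap this family has exactly $k$ cycles. The matching upper bound is obtained by writing down an explicit forcing set of size $k$ using the same local rules as in Figs. \ref{algo1} and \ref{algocase} (one forcing edge per $A$-block, one per $BBBB$-subrun) and verifying it via Corollary \ref{structure} by a recursive removal of pendant and cut edges. The main obstacle is the residue bookkeeping modulo $12$: one must pick the initial matching and the residue block $R$ so that every intermediate swap is legal and every intermediate matching is tight in both the lower and the upper bound. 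This requires a case analysis parallel to that of Theorem \ref{thm1min}, possibly supplementing the swap with the auxiliary tight $AB$-chains $ABB$, $AABB$, $AABBB$, or $ABBBBBBB$ appearing there, to reach every integer in $[\lceil (n+2)/6\rceil, \lceil n/4\rceil]$ without skipping a value.
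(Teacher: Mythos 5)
Your overall strategy (connect the extremal matchings of Theorems \ref{thm1max} and \ref{thm1min} by local swaps and track the forcing number step by step) is the right kind of idea, but the specific accounting has a genuine gap. You claim each swap $B^{12}\to BBBABA$ drops the forcing number by exactly $1$, certified below by a disjoint family of alternating $8$-cycles and above by an explicit forcing set of the same size. Take $n\equiv 0\pmod{12}$: you start at $\lceil n/4\rceil=n/4$, perform $n/12$ swaps, and land at $n/4-n/12=n/6$. But Theorem \ref{thm1min} says the minimum over $\mathcal{M}_1(P(n))$ is $\lceil\frac{n+2}{6}\rceil=n/6+1$, so your terminal value undershoots the true minimum by one; at least one swap cannot decrease the forcing number. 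The root cause is that your lower-bound certificate is not tight: $P(n)$ is non-bipartite, Theorem \ref{lemmima} does not apply (the paper notes this explicitly after Theorem \ref{thm1max}), and indeed Case 1 of the proof of Theorem \ref{thm1min} shows that for the fully periodic matching $(ABABBB)^{n/12}$ --- which is exactly your terminal matching $(BBBABA)^{n/12}$ up to cyclic rotation --- every forcing set meeting the naive count $\sum_i(a_i+\lfloor b_i/4\rfloor)$ is forced into a configuration that misses an alternating cycle. So ``disjoint $8$-cycles $=k$ and explicit set of size $k$'' cannot be made to work for all intermediate matchings, and the claim that the forcing number decreases by exactly one per swap is false.

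The fix, which is the paper's route, is to give up on computing the intermediate values exactly. The paper uses a smaller swap, $BA\leftrightarrow B^5$ (the symmetric difference with a single alternating $8$-cycle), and proves only that $|f(P(n),M_1)-f(P(n),M_2)|\le 1$ for each such swap: a minimum forcing set of $M_1$ meets the $5$-column window $W$ in at least one and at most two edges (since two specific edges determine all of $M_1\cap E(W)$), and replacing $S_1\cap E(W)$ by two prescribed edges yields a forcing set of $M_2$, and symmetrically. Chaining every matching to $B^n$ and applying the discrete intermediate value argument between the matchings realizing $\lceil n/4\rceil$ and $\lceil\frac{n+2}{6}\rceil$ then gives continuity without ever knowing which swap is the one that fails to decrease the value. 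If you replace your ``exactly $1$ per swap'' claim with a ``$\le 1$ per swap'' lemma (proved by the window-restriction argument rather than by disjoint cycles), your proposal becomes essentially the paper's proof.
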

    \begin{proof}
        Let $M_1$ be a perfect matching in $\mathcal{M}_1(P(n))$ expressed by a sequence of at least one $A$ and at least one $B$, $M_2$ be the perfect matching obtained from $M_1$ by transforming one chain $BA$ to $B^5$, and maintaining the other parts. We illustrate the labels in Fig. \ref{con1}. In fact, $M_2$ is the symmetric difference between $M_1$ and the $M_1$-alternating cycle $u_{i+1}u_{i+3}v_{i+3}v_{i+4}u_{i+4}u_{i+2}v_{i+2}v_{i+1}u_{i+1}$.

        \begin{figure}[htbp]
            \centering
            \includegraphics[height=0.45in]{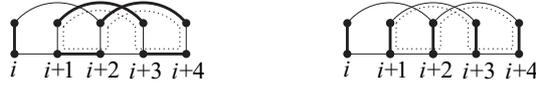}
            \caption{Alternating cycles in chains $BA$ and $B^5$.}
            \label{con1}
        \end{figure}

        Denote the subgraph $P(n)[\{u_j,v_j:j=i,i+1,i+2,i+3,i+4\}]$ by $W$. Then for each minimum forcing set $S_1$ of $M_1$, the number of edges of $W$ in $S_1$ is no less than one by Theorem \ref{lem5}, and no more than two. This is because $S_1\setminus E(W)$ is a forcing set of $M_1\setminus E(W)$ of the subgraph $P(n)-V(W)$; every $M_1$-alternating cycle that does not contain any edge of $M_1\cap E(W)$ must be contained in $P(n)-V(W)$; the edges $u_iv_i$ and $u_{i+1}u_{i+3}$ can \emph{determine} all edges of $M_1\cap E(W)$ (i.e. every $M_1$-alternating cycle which contains some edge in $M_1\cap E(W)$ must contain edge $u_iv_i$ or $u_{i+1}u_{i+3}$). By Theorem \ref{lem5}, we have that $S_1\setminus E(W)\cup \{u_iv_i,u_{i+1}u_{i+3}\}$ is a forcing set of $M_1$. From $S_1$, we could obtain a forcing set of $M_2$ by transforming all edges in $S_1\cap E(W)$ to two edges $u_iv_{i},$ $u_{i+4}v_{i+4}$ and maintaining the other edges, which implies $f(P(n),M_2)\le f(P(n),M_1)+1$. Similarly, we could obtain $f(P(n),M_1)\le f(P(n),M_2)+1$.

        Except for the perfect matching $A^{\frac n 4}$, we can give a series of transformations similar as above from each perfect matching in $\mathcal{M}_1(P(n))$ to $B^n$ with the variation of forcing numbers during each transformation no more than one. For the special case, $f(P(n),A^{\frac n 4})-f(P(n),B^n)=1$ if $n=8$, and $f(P(n),A^{\frac n 4})-f(P(n),B^n)=0$ otherwise. Then the theorem holds.
    \end{proof}

\section{Forcing number of a perfect matching in $\mathcal{M}_2(P(n))$}
    In this section, we first derive the maximum and minimum forcing numbers of second type of perfect matchings, then prove the continuity. In detail, for $n\ge 34$, the set of forcing numbers of second type of perfect matchings form $\left[\lceil \frac { n }{ 12 }  \rceil+1,\lceil \frac { n+3 }{ 7 }  \rceil +\delta (n)\right]$.

\subsection{Maximum value of forcing numbers}
    \begin{thm}
        \label{thm2max}
        The maximum value of forcing numbers of perfect matchings in $\mathcal{M}_2(P(n))$ is $\lceil \frac { n+3 }{ 7 } \rceil +\delta (n)$ for $n\ge 34,$ where $\delta (n)=1$ if $n\equiv 3$ \emph{(mod 7),} and $\delta (n)=0$ otherwise$.$
    \end{thm}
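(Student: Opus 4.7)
The plan mirrors the two-part strategy used in the proof of Theorem~\ref{thm1max}: first construct a specific matching $M^{*}\in\mathcal{M}_{2}(P(n))$ whose forcing number attains the claimed bound, then show that no matching in $\mathcal{M}_{2}(P(n))$ has a larger forcing number.

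For the first part (exhibiting $M^{*}$) the natural candidate is the cyclic repetition of the length-$7$ block $CD$, with a short residue chain $R$ appended so that $3c+4d=n$ holds. The shape of $R$ depends on $r=n\bmod 7$. When $r=0$ I would take $R=\varnothing$; when $r=4$ take $R=D$; when $r=3$ take $R=C$ (which is what gives the extra contribution $\delta(n)=1$); and for $r\in\{1,2,5,6\}$ use an extra $D$ or pair of $C$'s to adjust the column count. A direct check using the congruence $j-i-1\equiv 2\pmod{4}$ that defines $\mathcal{M}_{2}$ confirms that each such $M^{*}$ lies in $\mathcal{M}_{2}(P(n))$; for example, in $M^{*}=(CD)^{(n-3)/7}C$ all consecutive spokes are at column-distances $7$ or $3$, so $j-i-1\in\{6,2\}$. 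For each $M^{*}$ I would then exhibit a forcing set $S^{*}$ of size exactly $\lceil(n+3)/7\rceil+\delta(n)$, by picking one edge of $M^{*}$ per $CD$-block (and one extra edge from $R$ in the $r=3$ case), and verify via Corollary~\ref{structure} that $S^{*}$ forces $M^{*}$. The matching lower bound is obtained by locating $\lceil(n+3)/7\rceil+\delta(n)$ pairwise vertex-disjoint $M^{*}$-alternating cycles and invoking Theorem~\ref{lem5}.

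For the second part (the upper bound), given an arbitrary $M\in\mathcal{M}_{2}(P(n))$ I would decompose the cyclic sequence of $C$'s and $D$'s into blocks. A natural unit is the $CD$-chain consisting of one $C$ followed by the maximal run of $D$'s immediately after it; runs of consecutive $C$'s receive their own separate treatment. For each block a local rule selects one edge of $M$ to include in $S$ (two edges only when the block is an isolated $C$ forced by $n\equiv 3\pmod{7}$). Verification that $S$ is a forcing set again uses Corollary~\ref{structure}: the chosen edge, together with the structural edges of $M$ inside the block, triggers a cascade of pendant- and cut-edge removals that eliminates the block and feeds a pendant edge into the next block along the cycle. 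A straightforward block count then yields $|S|\le\lceil(n+3)/7\rceil+\delta(n)$.

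The main obstacle is part~(2). Unlike structure $A$ in $\mathcal{M}_{1}$, structure $D$ does \emph{not} carry its own internal $M$-alternating $8$-cycle, because its leftmost $v$-vertex is matched across the block boundary by the preceding structure; the $M$-alternating cycles that $S$ must hit are therefore long and straddle several consecutive structures. Consequently the local rule has to be coherent with its neighbours across block boundaries, and a case analysis analogous to Cases~1 and~2 of Theorem~\ref{thm1max} is unavoidable, split by the residue $r=n\bmod 7$ and by whether the decomposition contains long runs of $C$'s or of $D$'s. The residue $r=3$ case is intrinsically tight, which is the combinatorial reason for the $+\delta(n)$ correction in the theorem.
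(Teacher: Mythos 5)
Your two-part skeleton matches the paper's, and you correctly identify the extremal family $(CD)^k$ plus a residue chain and the exceptional role of $C(CD)^{(n-3)/7}$ when $n\equiv 3\pmod 7$. But your mechanism for the lower bound in part (1) fails. You propose to certify $f(P(n),M^*)\ge \lceil\frac{n+3}{7}\rceil+\delta(n)$ by exhibiting that many pairwise disjoint $M^*$-alternating cycles and invoking Theorem \ref{lem5}. That many disjoint cycles do not exist: $P(n)$ is non-bipartite, Theorems \ref{thm2} and \ref{lemmima} do not apply, and the paper explicitly remarks (after Theorem \ref{thm1max}) that $C(P(n),M)$ can be strictly smaller than $f(P(n),M)$. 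Concretely, take $n=7k+4$ and $M=D(CD)^k$: the cyclic sequence has $2k+1$ structures, every short $M$-alternating $8$-cycle straddles two adjacent structures (Claim 2 of the paper), so at most $k$ of them are disjoint, yet the forcing number is $k+1$. The paper's actual argument is a contradiction argument of a different character: assuming $|S_0|\le k$, Claim 2 forces $|S_0|=k$ with exactly one edge per chain $CD$, this pins $S_0$ down to $\{u_{7i+4}v_{7i+4}\}$, and then one exhibits a single long $M$-alternating cycle missed by $S_0$. This ``one more than the disjoint-cycle bound'' step is the heart of the lower bound and cannot be replaced by a disjoint-cycle count; the cases $n\equiv 1,2,3,5,6\pmod 7$ require increasingly intricate versions of the same pin-down-then-miss argument.

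Two smaller issues. First, for $n\equiv 5\pmod 7$ the generic pattern ``$(CD)^k$ plus a short residue chain'' does not obviously attain the bound; the paper switches to the quite different matching $(CDC)^4(DC)^{(n-40)/7}$ and its Case 3 analysis is the longest in the proof, so your ``extra $D$ or pair of $C$'s'' adjustment needs to be replaced or justified. Second, your upper-bound decomposition into ``one $C$ followed by its maximal run of $D$'s'' is coarser than what the verification requires: the paper's decomposition uses blocks of the form $CD^dC$, $C(CD)^kCC$, $C(CD)^kD$, etc., with separate treatment of $M=D^{n/4}$ and of matchings with exactly one $C$, and the number of selected edges per block varies. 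You correctly anticipate that the local rules must be coherent across block boundaries, but the accounting that delivers $|S|\le\lceil\frac{n+3}{7}\rceil$ depends on this finer block structure.
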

    \begin{proof}
        We divide our proof in two steps. First we find a perfect matching in $\mathcal{M}_2(P(n))$ with forcing number no less than $\lceil \frac { n+3 }{ 7 } \rceil +\delta (n)$. Then we prove that the forcing number of each perfect matching in $\mathcal{M}_2(P(n))$ is no more than $\lceil \frac { n+3 }{ 7 } \rceil +\delta (n)$.

        (1) Now we give a perfect matching $M$ (see Fig. \ref{upper}) of $P(n)$ expressed by
        \begin{align*}
            \left\{ \begin{array} {ll} CCCD(CD)^{\lfloor \frac{n-13} 7\rfloor} & \makebox{if}~n\equiv 6~(\text{mod}~7), \\ CDCD(CD)^{\lfloor \frac{n-13} 7\rfloor} & \makebox{if}~n\equiv 0~(\text{mod}~7), \\ DDCD(CD)^{\lfloor \frac{n-13} 7\rfloor} & \makebox{if}~n\equiv 1~(\text{mod}~7), \\ CCCCD(CD)^{\lfloor \frac{n-13} 7\rfloor} & \makebox{if}~n\equiv 2~(\text{mod}~7), \\  CCDCD(CD)^{\lfloor \frac{n-13} 7\rfloor} & \makebox{if}~n\equiv 3~(\text{mod}~7), \\ DCDCD(CD)^{\lfloor \frac{n-13} 7\rfloor} & \makebox{if}~n\equiv 4~(\text{mod}~7), \\ CDCCDCCDCCDC(DC)^{\frac{n-40} 7} & \makebox{if}~n\equiv 5~(\text{mod}~7). \end{array} \right.
        \end{align*}

        \begin{figure}[htbp]
            \centering
            \includegraphics[height=3in]{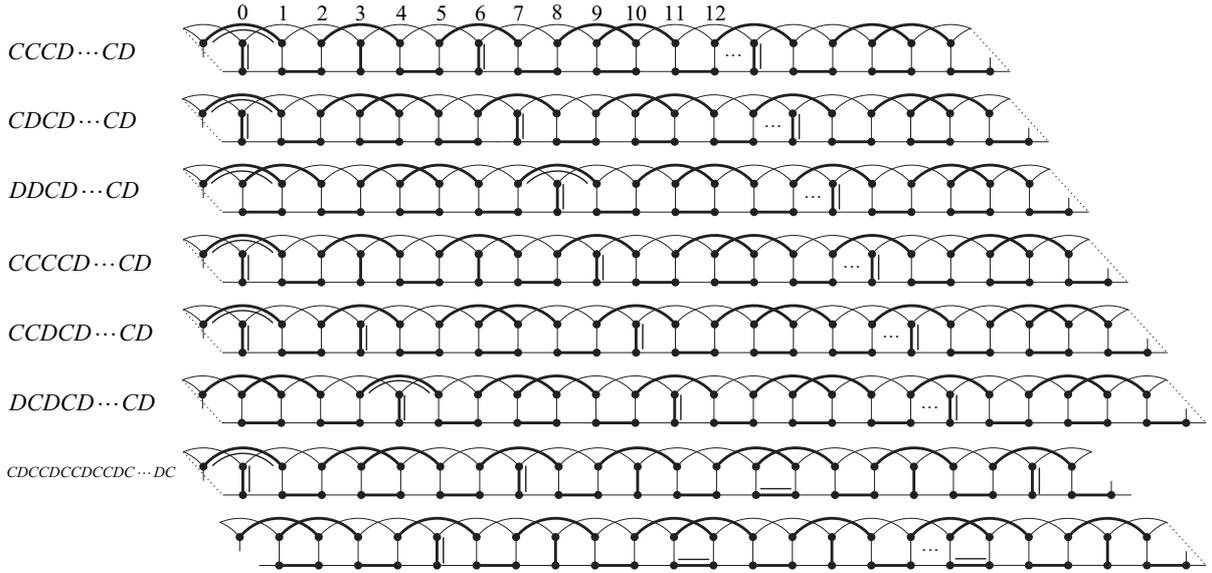}
            \caption{Perfect matching which achieves the upper bound of Theorem \ref{thm2max}.}
            \label{upper}
        \end{figure}

        \begin{cla2}
            \label{quan}
            Given a perfect matching $M$ of $P(n)$ with a forcing set $S,$ chains $CD$ and $DC$ each contains an $M$-alternating 8-cycle (see Fig. \ref{upper11}) and thus at least one edge of $S.$
        \end{cla2}

        \begin{figure}[htbp]
            \centering
            \includegraphics[height=0.45in]{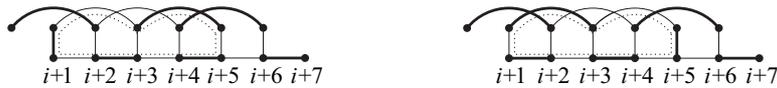}
            \caption{$M$-Alternating 8-cycles in chains $CD$ and $DC$.}
            \label{upper11}
        \end{figure}

        Next we prove $f(P(n),M)\ge \lceil \frac { n+3 }{ 7 } \rceil +\delta (n)$. To the contrary, suppose there is a forcing set $S_0$ of $M$ such that $|S_0|<\left\lceil \frac { n+3 }{ 7 }  \right\rceil +\delta (n)$.

        \textbf{Case 1.} $n\equiv 0,4$ (mod 7). Here we consider the case of $n\equiv 4$ (mod 7), and the other case is similar. Let $n=7k+4$ $(k\ge 5)$. Then $\left|S_0\right|\le k$ and $M=D(CD)^k$. So $\left|S_0\right|\ge k$ by Claim \ref{quan}. Hence $|S_0|=k$ and $S_0$ consists of precisely one edge of each chain $CD$. Since the first chain $DCD$ contains precisely one edge of $S_0$, the edge must be spoke $u_4v_4$ by Claim \ref{quan}. In general, it follows that $S_0$ consists of precisely one edge of each chain $C$ and $S_0=\{u_{7i+4}v_{7i+4}:i=0,1,\ldots,k-1\}$ (see Fig. \ref{upperkk}(a)). Hence an $M$-alternating cycle in Fig. \ref{upperkk}(a) contains no edges of $S_0$, a contradiction.

        \textbf{Case 2.} $n\equiv 1,2,3,6$ (mod 7). Here we consider the case of $n\equiv 2$ (mod 7), and the other cases are similar. Let $n=7k+9$ $(k\ge 4)$. Then $\left|S_0\right|\le k+1$ and $M=CCC(CD)^k$. So $|S_0|\ge k$ by Claim \ref{quan}. Let chain $V=CCC$ and $W=(CD)^k$. Then $M=VW$. We use the notations $\underline{C}$ and $\underline{D}$ in some sequence to denote the fact: such chain contains precisely one edge of $S_0$.

        If $|S_0|=k$, then each chain $CD$ contains precisely one edge of $S_0$. Note that $CDC$ is a chain in that cyclic sequence. It follows that $S_0$ satisfies $CCC(C\underline{D})^k$ and $S_0=\{v_{7i+12}v_{7i+13}:i=0,1,\ldots,k-1\}$ (see Fig. \ref{upperkk}(b)). Hence an $M$-alternating cycle $C_{2}$ in Fig. \ref{upperkk}(b) contains no edges of $S_0$, a contradiction. So $|S_0|=k+1$.

        \begin{figure}[htbp]
            \centering
            \includegraphics[height=1in]{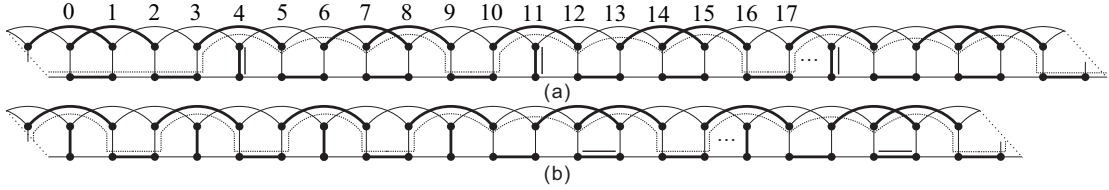}
            \caption{Illustration of Cases 1 and 2 in the proof of Theorem \ref{thm2max}.}
            \label{upperkk}
        \end{figure}

        Suppose that $S_0$ contains one edge of $V$. If $S_0$ satisfies $\underline {C}CC(C\underline {D})^{k_1}(\underline {C}D)^{k_2}$ ($k_1+k_2=k$, $k_1\ge 0$, $k_2\ge 1$), then $S_0\subset \{u_0v_0,u_{7i+9}v_{7i+9},v_{7i+10}v_{7i+11},u_{7i+11}u_{7i+13},v_{7i+12}v_{7i+13}:i=0,1,\ldots,k-1\}$. Hence the above $M$-alternating cycle $C_{2}$ also contains no edges of $S_0$, a contradiction. Otherwise, $S_0$ contains precisely one edge of each chain $D$ of $W$ and $S_0\cap E(W)\subset \{u_{7i+11}u_{7i+13},v_{7i+12}v_{7i+13}:i=0,1,\ldots,k-1\}$. Hence an $M$-alternating cycle obtained from $C_{2}$ by switching $v_{n-1}u_{n-1}u_1v_1\cdots u_7v_7v_8u_8$ to $v_{n-1}v_0u_0u_2u_4v_4v_5v_6u_6u_8$ contains no edges of $S_0$ if $V$ has an edge $u_{n-1}u_1,$ $v_1v_2,$ $u_3v_3,$ $u_5u_7$ or $v_7v_8$ in $S_0$, and an $M$-alternating cycle obtained from $C_{2}$ by switching $v_2u_2u_4v_4v_5u_5$ to $v_2v_3u_3u_5$ contains no edges of $S_0$ if $V$ has an edge $u_0v_0,$ $u_2u_4,$ $v_4v_5$ or $u_6v_6$ in $S_0$, a contradiction.

        Suppose that $V$ contains no edges of $S_0$. Then one chain $CD$ in $W$, say $U$, contains precisely two edges of $S_0$ and the other chains $CD$ each contains precisely one edge of $S_0$. Let $U=P(n)[\{u_j,v_{j+1}:j=7i+8,7i+9,\ldots,7i+14\}]$ $(0\le i\le k-1)$.

        \textbf{Case 2.1.} $S_0$ satisfies $CCC(C\underline {D})^{k_1}(\underline {C}D)^{k_2}\underline {CD}(C\underline {D})^{k_3}$, where $k_1+k_2+k_3=k-1$ and $k_1,k_2,k_3\ge 0$. Then $S_0\setminus E(U) \subset \{u_{7j+9}v_{7j+9},v_{7j+10}v_{7j+11},u_{7j+11}u_{7j+13},v_{7j+12}v_{7j+13}:i\neq j=0,1,\ldots,k-1\}$. Hence an $M$-alternating cycle obtained from $C_{2}$ by switching $u_{7i+3}u_{7i+5}u_{7i+7}v_{7i+7}v_{7i+8}u_{7i+8}$ to $u_{7i+3}v_{7i+3}v_{7i+4}u_{7i+4}u_{7i+6}u_{7i+8}$ contains no edges of $S_0$ if $C$ in $U$ has an edge $u_{7i+2}v_{7i+2}$ in $S_0$, and an $M$-alternating cycle obtained from $C_{2}$ by switching $v_{7i+1}u_{i+1}u_{7i+3}u_{7i+5}u_{7i+7}v_{7i+7}v_{7i+8}u_{7i+8}$ to $v_{7i+1}v_{7i+2}u_{7i+2}u_{7i+4}u_{7i+6}u_{7i+8}$ contains no edges of $S_0$ if $C$ in $U$ has an edge $u_{7i+1}u_{7i+3}$ or $v_{7i+3}v_{7i+4}$ in $S_0$, a contradiction.

        \textbf{Case 2.2.} $C$ or $D$ in $U$ contains two edges of $S_0$. It follows that the edges in $S_0$ are all contained in chains $D$ and $S_0\setminus E(U) \subset \{u_{7j+11}u_{7j+13},v_{7j+12}v_{7j+13}:i\neq j=0,1,\ldots,k-1\}$. Given an $M$-alternating cycle $C_2'$ which obtained from $C_{2}$ by switching $u_{7i+7}v_{7i+7}v_{7i+8}u_{7i+8}\cdots u_{7i+14}v_{7i+14}v_{7i+15}u_{7i+15}$ to $u_{7i+7}u_{7i+9}v_{7i+9}v_{7i+10}v_{7i+11}u_{7i+11}u_{7i+13}$\\$u_{7i+15}$ if both edges of $U$ in $S_0$ are from $\{u_{7i+4}u_{7i+6},v_{7i+5}v_{7i+6},v_{7i+7}v_{7i+8}\}$ and $i< k$, by switching $u_{n-2}v_{n-2}v_{n-1}u_{n-1}u_1v_1$ to $u_{n-2}u_0v_0v_1$ if both edges of $U$ in $S_0$ are from $\{u_{7i+4}u_{7i+6},v_{7i+5}v_{7i+6},v_{7i+7}v_{7i+8}\}$ and $i= k$, by switching $u_{7i+3}u_{7i+5}u_{7i+7}v_{7i+7}v_{7i+8}u_{7i+8}$ to $u_{7i+3}v_{7i+3}v_{7i+4}u_{7i+4}u_{7i+6}u_{7i+8}$ if $U$ has two edges $u_{7i+5}u_{7i+7}$ and $v_{7i+5}v_{7i+6}$, or $u_{7i+5}u_{7i+7}$ and $v_{7i+7}v_{7i+8}$ in $S_0$, and by switching $u_{7i+3}u_{7i+5}u_{7i+7}v_{7i+7}$ to $u_{7i+3}v_{7i+3}v_{7i+4}v_{7i+5}v_{7i+6}v_{7i+7}$ if $U$ has two edges $u_{7i+4}u_{7i+6}$ and $u_{7i+5}u_{7i+7}$ in $S_0$. In all cases mentioned above, the obtained $M$-alternating cycle $C_2'$ contains no edges of $S_0$, a contradiction.

        \textbf{Case 3.} $n\equiv 5$ (mod 7). Let $n=7k+40$ $(k\ge 0)$. Then $\left|S_0\right|\le k+6$ and $M=(CDC)^4(DC)^k$. Let chain $V=(CDC)^4$ and $W=(DC)^k$. Then $M=VW$. By Claim \ref{quan}, we have $4\le |S_0\cap E(V)|\le 6$.

        Suppose $|S_0\cap E(V)|=4$. Then $S_0\cap E(V)=\{v_{10i+3}v_{10i+4}:i=0,1,2,3\}$. Hence an $M$-alternating cycle $u_{7}u_{9}u_{11}u_{13}u_{15}u_{17}v_{17}v_{18}v_{19}v_{20}u_{20}u_{18}u_{16}u_{14}u_{12}u_{10}v_{10}v_{9}v_{8}v_{7}u_{7}$ contains no edges of $S_0$, and an $M$-alternating cycle $u_{17}u_{19}u_{21}u_{23}u_{25}u_{27}v_{27}v_{28}v_{29}v_{30}u_{30}u_{28}u_{26}u_{24}u_{22}u_{20}$\\$v_{20}v_{19}v_{18}v_{17}u_{17}$ contains no edges of $S_0$, a contradiction.

        If $|S_0\cap E(V)|=5$, then one chain $CDC$ in $V$, say $U$, contains precisely two edges of $S_0$, and the other chains $CDC$ each contains precisely one edge of $S_0$. By a similar argument as above, we have $k\ge 1$ and $U$ could not be the first or last one. Suppose $|S_0|=k+5$ and $U$ is the second one. Hence $S_0$ contains precisely one edge of each chain $D$ of $W$. Note that $CDC$ is a chain in that cyclic sequence. So $S_0\setminus E(U)\subset \{v_{10i+3}v_{10i+4},v_{7i+40}v_{7i+41}:i=0,1,2,3,j=0,1,\ldots,k-1\}$. Since there is an $M$-alternating cycle $C_{3}$ in Fig. \ref{upperkk11}(a) and an $M$-alternating cycle obtained from $C_{3}$ by switching $v_9u_9u_{11}u_{13}\cdots u_{25}u_{27}v_{27}v_{28}$ to $v_9 v_{10}u_{10}u_{12}u_{14}u_{16}u_{18}u_{20}v_{20}v_{21}v_{22}u_{22}u_{24}u_{26}u_{28}v_{28}$, we have $\{u_9u_{11},u_{13}u_{15},u_{17}v_{17},v_{18}v_{19}\}\cap S_0\neq \emptyset$ and $\{u_{10}v_{10},u_{12}u_{14},u_{16}u_{18}\}\cap S_0\neq \emptyset$ by Theorem \ref{lem5}. Combining Claim \ref{quan}, we know the two edges of $U$ in $S_0$ are one from $\{u_{13}u_{15},u_{17}v_{17}\}$ and the other from $\{u_{10}v_{10},u_{12}u_{14}\}$. Hence an $M$-alternating cycle obtained from $C_{3}$ by switching $u_{11}u_{13}u_{15}u_{17}\cdots u_{25}u_{27}v_{27}v_{28}$ to $u_{11}v_{11}v_{12}v_{13}v_{14}v_{15}v_{16}u_{16}u_{18}u_{20}v_{20}v_{21}v_{22}u_{22}u_{24}u_{26}u_{28}v_{28}$ contains no edges of $S_0$, a contradiction. Similarly, if either $|S_0|=k+5$ and $U$ is the third one or $|S_0|=k+6$, then it also deduces a contradiction. So $|S_0\cap E(V)|=6$ and $S_0$ contains precisely one edge of each chain $DC$ of $W$.

        \textbf{Case 3.1.} Two chains $CDC$ in $V$, say $U_1$ and $U_2$, each contains precisely two edges of $S_0$ and the other two each contains precisely one edge of $S_0$. Since there is an $M$-alternating path $u_{10i-1}u_{10i+1}u_{10i+3}u_{10i+5}v_{10i+5}v_{10i+6}u_{10i+6}u_{10i+8}$ containing no edge $u_{10i}v_{10i},$ $v_{10i+1}v_{10i+2},$ $u_{10i+2}u_{10i+4},$ $v_{10i+3}v_{10i+4}$ or $u_{10i+7}v_{10i+7}$, an $M$-alternating path $u_{10i-1}$\\$u_{10i+1}u_{10i+3}u_{10i+5}u_{10i+7}v_{10i+7}v_{10i+8}v_{10i+9}$ containing no edge $v_{10i+1}v_{10i+2},$ $u_{10i+2}u_{10i+4},$\\$v_{10i+3}v_{10i+4},$ $v_{10i+5}v_{10i+6}$ or $u_{10i+6}u_{10i+8}$, an $M$-alternating path $u_{10i-1}u_{10i+1}v_{10i+1}v_{10i+2}$\\$u_{10i+2}u_{10i+4}u_{10i+6}u_{10i+8}$ containing no edge $u_{10i}v_{10i},$ $u_{10i+3}u_{10i+5},$ $v_{10i+3}v_{10i+4},$ $v_{10i+5}v_{10i+6}$ or $u_{10i+7}v_{10i+7}$, an $M$-alternating path $u_{10i-1}u_{10i+1}v_{10i+1}v_{10i+2}v_{10i+3}v_{10i+4}v_{10i+5}v_{10i+6}u_{10i+6}$\\$u_{10i+8}$ containing no edge $u_{10i+2}u_{10i+4}$ or $u_{10i+3}u_{10i+5}$, an $M$-alternating path $u_{10i-1}u_{10i+1}$\\$u_{10i+3}u_{10i+5}v_{10i+5}v_{10i+6}v_{10i+7}u_{10i+7}u_{10i+9}u_{10i+11}$ containing no edge $v_{10i+3}v_{10i+4}$ or $v_{10i+8}$\\$v_{10i+9}$, an $M$-alternating path $v_{10i-2}v_{10i-1}v_{10i}u_{10i}u_{10i+2}u_{10i+4}u_{10i+6}u_{10i+8}$ containing no edge $u_{10i-1}u_{10i+1},$ $v_{10i+1}v_{10i+2},$ $u_{10i+3}u_{10i+5}$ or $v_{10i+3}v_{10i+4}$, and an $M$-alternating path $u_{7j+32}u_{7j+34}u_{7j+36}u_{7j+38}v_{7j+38}v_{7j+39}$ containing no edge $v_{7j+40}v_{7j+41}$ for $i=0,1,2,3$ and $j=0,1,\ldots,k-1$, we can generate an $M$-alternating cycle which contains no edges of $S_0$ except for the cases that the four edges of $U_1$ and $U_2$ are either $v_{10s+3}v_{10s+4},$ $v_{10s+8}v_{10s+9},$ $u_{10s+9}u_{10s+11},$ $v_{10s+13}v_{10s+14}$, or $v_{10s+3}v_{10s+4},$ $v_{10s+8}v_{10s+9},$ $v_{10s+11}v_{10s+12},u_{10s+13}u_{10s+15}$ for $s=0,1,2$. However, we can transform the above exceptions to the following Case 3.2 by changing the four edges into $u_{10s-1}u_{10s+1},$ $u_{10s}v_{10s},$ $u_{10s+7}v_{10s+7},v_{10s+13}v_{10s+14}$ and maintaining the other edges in $S_0$. Namely, if for the above exceptions $S_0$ is a forcing set of $M$, then for the transformation case $S_0$ is also a forcing set of $M$.

        \textbf{Case 3.2.} One chain $CDC$ in $V$, say $X$, contains precisely three edges of $S_0$, and the other chains $CDC$ each contains precisely one edge of $S_0$. Similar as above, we have $k\ge 1$ and $X$ could not be the first or last one. Then $S_0\setminus E(X)\subset \{v_{10i+3}v_{10i+4},v_{7i+40}v_{7i+41}:i=0,1,2,3,j=0,1,\ldots,k-1\}$. Hence an $M$-alternating cycle in Fig. \ref{upperkk11}(b) contains no edges of $S_0$ if $k$ is odd and $X$ is the second one, an $M$-alternating cycle in Fig. \ref{upperkk11}(c) contains no edges of $S_0$ if $k$ is even and $X$ is the second one, and an $M$-alternating cycle $u_1u_{3}u_{5}u_{7}v_{7}v_{8}v_{9}v_{10}u_{10}u_{8}u_{6}u_{4}u_{2}u_{0}v_{0}v_{n-1}v_{n-2}v_{n-3}u_{n-3}u_{n-1}u_{1}$ contains no edges of $S_0$ if $X$ is the third one, a contradiction.

        \begin{figure}[htbp]
            \centering
            \includegraphics[height=2.65in]{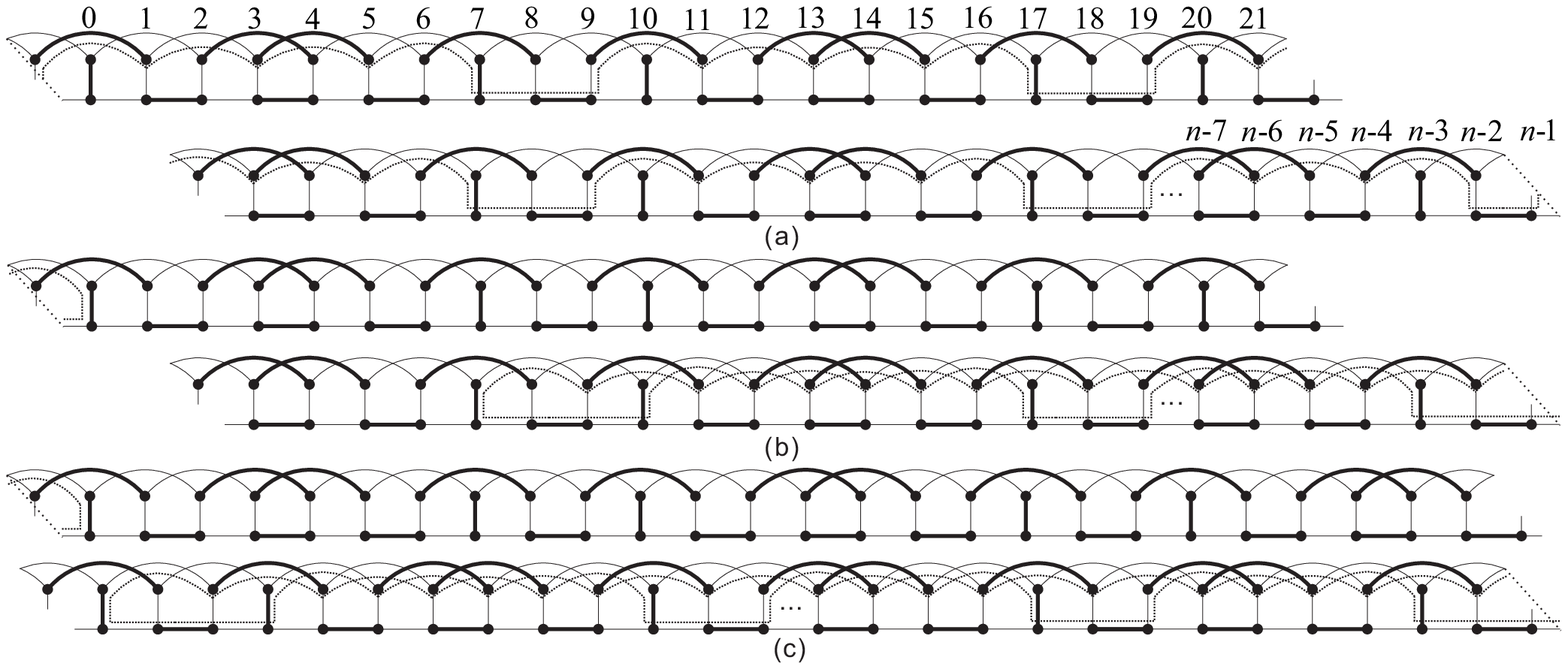}
            \caption{Illustration of Case 3 in the proof of Theorem \ref{thm2max}.}
            \label{upperkk11}
        \end{figure}

        \begin{cla2}
            \label{max33}
            Let $R$ be a subset of $V(P(n))$ with $u_{t-1},u_{t+2},u_{t+5}\in R$ and $u_{t+1},v_{t+1},v_{t+2},u_{t+3},$\\$v_{t+3}\notin R.$ Then $u_{t+1}u_{t+3}$ belongs to all perfect matchings of $P(n)-R$ if there exists one$.$
        \end{cla2}

        We illustrate the labels in Fig. \ref{algo5kk}. Because $u_{t+1}$, $v_{t+2}$, $u_{t+3}$ are odd components of $P(n)-R-u_{t+1}u_{t+3}-\{v_{t+1},v_{t+3}\}$, $P(n)-R-u_{t+1}u_{t+3}$ has no perfect matchings by Tutte's 1-factor Theorem. So the claim holds.

        \begin{figure}[htbp]
            \centering
            \includegraphics[height=0.45in]{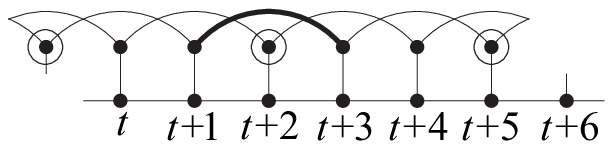}
            \caption{Illustration of Claim \ref{max33} in the proof of Theorem \ref{thm2max}.}
            \label{algo5kk}
        \end{figure}

        (2) Next we prove that for each $M\in \mathcal{M}_2(P(n))$ except $C(CD)^{\frac{n-3}7}$ ($n\equiv 3$ (mod 7)), there is a forcing set of $M$ with cardinality no more than $\lceil \frac { n+3 }{ 7 } \rceil$. Let us distinguish the number of $C$ in any such $M$ as follows.

        \textbf{Case 1.} There are no $C$ in $P(n)$. Then $n\equiv 0$ (mod 4) and $M=D^{\frac n 4}$. Let $S=\{u_{n-1}u_1,v_2v_3,u_{8i+4}u_{8i+6}:i=0,1,\ldots,\lfloor\frac{n-12}{8}\rfloor\}\subseteq M$ (see Fig. \ref{algo5kk11}(a)).

        \begin{cla2}
            \label{minkk}
            $S$ is a forcing set of $M$ of $P(n).$
        \end{cla2}

        We prove it by Corollary \ref{structure} (1) and (3) and Claim \ref{max33}. By a similar argument to Claim \ref{claim11} in Theorem \ref{thm1max}, we have $E_4=\{u_0u_2,v_0v_1,u_3u_5,v_4v_5\}$, $E_5=E_4\cup\{v_6v_7\}$, $E_6=E_5\cup\{u_7u_9\}$. By Claim \ref{max33}, $u_8u_{10}$ belongs to all perfect matchings of $P(n)-V(S)-V(E_6)$. Then $E_7=E_6\cup\{u_8u_{10}\}$. For the resulting graph, clearly we may continue to find new edges as stated in Corollary \ref{structure} until reaching an empty graph. Hence $S$ is a forcing set of $M$.

        \textbf{Case 2.} There is precisely one $C$ in $P(n)$. Similar as above, we can confirm that $\{u_{n-1}u_1,u_0v_0,u_{8i-1}u_{8i+1}:i=1,2,\ldots,\lfloor\frac {n-7} 8\rfloor\}$ (see Fig. \ref{algo5kk11}(b)) is a forcing set of $M$.

        \begin{figure}[htbp]
            \centering
            \includegraphics[height=1.8in]{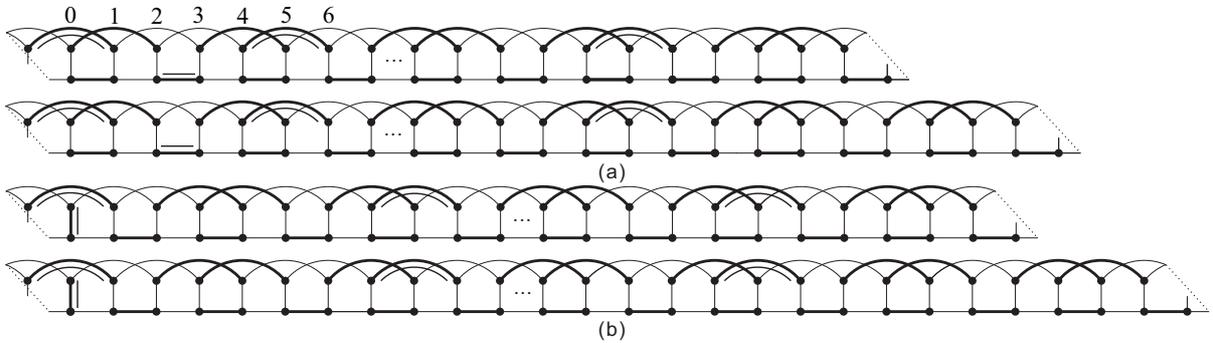}
            \caption{Illustration of Cases 1 and 2 in the proof of Theorem \ref{thm2max}.}
            \label{algo5kk11}
        \end{figure}

        \textbf{Case 3.} There are at least two $C$ in $P(n)$.

        \textbf{Case 3.1.} There is a chain $W$ in $P(n)$ being either $CCCC$, $CD^dC$ with even $d\ge 2$, or $CD^dC$ with odd $d\ge 5$.

        Suppose $P(n)-V(W)$ is empty. Let $S=\{u_{n-1}u_{1},u_0v_0,u_{n-3}v_{n-3},u_{8i-1}u_{8i+1}:i=1,2,\ldots,\lfloor\frac{d-1}{2}\rfloor\}$ (see Fig. \ref{algo5}(a)).

        \begin{cla2}
            \label{last}
            $S$ is a forcing set of $M$ of $P(n).$
        \end{cla2}

        We prove it by Corollary \ref{structure}. We know that $u_{n-4}u_{n-2}$ is a cut edge of $P(n)-V(S)$ such that $P(n)-V(S)-u_{n-4}u_{n-2}$ has an odd component with vertex set $\{v_{1},u_i,v_i:i=2,3,\ldots,n-4\}\setminus V(S)$. Then we can determine $E_1=\{u_{n-4}u_{n-2}\}$. For the resulting graph, similar to the proof of Claim \ref{minkk}, clearly we may continue to find new edges as stated in Corollary \ref{structure} until reaching an empty graph. Hence $S$ is a forcing set of $M$.

        From now on, suppose $P(n)-V(W)$ is not empty. Given a chain decomposition $W_1,W_2,\ldots,W_m$ ($m\ge 2$) such that $W_1$ is $W$; each $W_i$ ($2\le i\le m-1$) is either chain $DD,$ $DC,$ $CD,$ $C(CD)^kCC$ with $k\ge 0$, or $C(CD)^kD$ with $k\ge 1$; $W_m$ is either chain $D$, $DD,$ $DC,$ $CD,$ $C(CD)^k$ with $k\ge 0$, $C(CD)^kC$ with $k\ge 0$, $C(CD)^kCC$ with $k\ge 0$, or $C(CD)^kD$ with $k\ge 1$.

        For $W_1$, we give the edge subset $S_1$ in Fig. \ref{algo5}(b). For each $W_j$ ($2\le j\le m-1$), we give the edge subset $S_j$ in Fig. \ref{algo5}(c). For $W_m$, let $S_m=\emptyset$ if $W_m$ is chain $D$ or $DD$; otherwise, we give the edge subset $S_m$ in Fig. \ref{algo5}(d).

        Similar to the proof of Claim \ref{last}, we can confirm that $\cup_{j=1}^m S_j$ with cardinality no more than $\lceil \frac { n+3 }{ 7 } \rceil$ is a forcing set of $M$.

        \textbf{Case 3.2.} There is a chain $CDDDC$ in $P(n)$, denoted by $W$. Given a chain decomposition $W_1,W_2,\ldots,W_m$ ($m\ge 2$) similar to Case 3.1.

        If either there is a $W_i$ being chain $DD$ or $C(CD)^kCC$ with $k\ge 0$, or $W_m$ is not chain $DC$ or $CD$, then for $W_1$ let $S_1=\{u_{i-1}u_{i+1},u_iv_i,u_{i+7}u_{i+9},u_{i+15}v_{i+15}\}$ (see Fig. \ref{algo5}(b)); for others $W_j$, we give the edge subset $S_j$ similar to Case 3.1. Similar to the proof of Claim \ref{last}, we can confirm that $\cup_{j=1}^m S_j$ with cardinality no more than $\lceil \frac { n+3 }{ 7 } \rceil$ is a forcing set of $M$.

        Otherwise, each $W_i$ ($2\le i\le m-1$) is chain $DC$, $CD$ or $C(CD)^kD$ with $k\ge 1$, and $W_m$ is chain $DC$ or $CD$. Then for each $W_j$, we give the edge subset $S_j$ in Fig. \ref{algo5}(e). Similar to the proof of Claim \ref{last}, we can confirm that $\cup_{i=j}^m S_j$ with cardinality no more than $\lceil \frac { n+3 }{ 7 } \rceil$ is a forcing set of $M$.

        \textbf{Case 3.3.} There is a chain $CDC$ in $P(n)$, denoted by $W$. Given a chain decomposition $W_1,W_2,\ldots,W_m$ ($m\ge 2$) similar to Case 3.1.

        If either there is a $W_i$ being chain $DD$ or $C(CD)^kCC$ with $k\ge 0$, or $W_m$ is not chain $DC$ or $CD$, then for $W_1$ let $S_1=\{u_{i-1}u_{i+1},u_iv_i,u_{i+7}v_{i+7}\}$ (see Fig. \ref{algo5}(b)); for others $W_j$, we give the edge subset $S_j$ similar to Case 3.1. Similar to the proof of Claim \ref{last}, we can confirm that $\cup_{j=1}^m S_j$ with cardinality no more than $\lceil \frac { n+3 }{ 7 } \rceil$ is a forcing set of $M$.

        Otherwise, each $W_i$ ($2\le i\le m$) is chain $DC$ or $CD$, or rather, $M=C(CD)^{\frac{n-3}7}$.

        It remains to show that for the perfect matching $C(CD)^{\frac{n-3}7}$, there is a forcing set with cardinality no more than $\lceil \frac { n+3 }{ 7 } \rceil+1$. This can be verified by double lines illustrated in Fig. \ref{upper} for a similar reason to Claim \ref{last}.
    \end{proof}
    \begin{figure}
        \centering
        \includegraphics[height=6.7in]{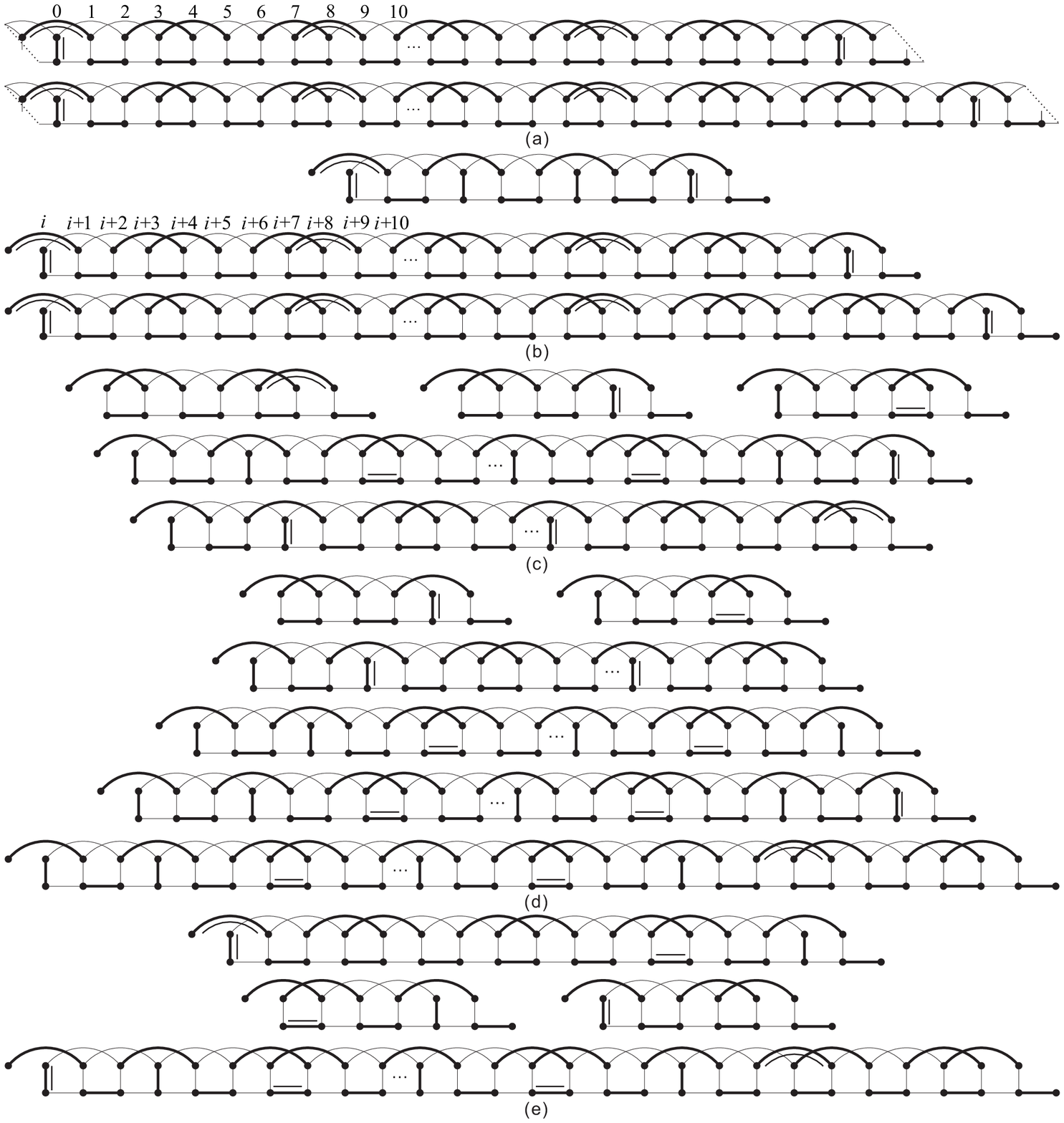}
        \caption{Illustration of Case 3 in the proof of Theorem \ref{thm2max}.}
        \label{algo5}
    \end{figure}

    Note that the above result does not hold for $n=33$.

\subsection{Minimum value of forcing numbers}
    \begin{thm}
        \label{thm2min}
        The minimum value of forcing numbers of perfect matchings in $\mathcal{M}_2(P(n))$ is $\lceil \frac { n }{ 12 } \rceil+1$ for $n\ge 11.$
    \end{thm}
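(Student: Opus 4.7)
The plan follows the two-step template already used in Theorem~\ref{thm1min} for $\mathcal{M}_1(P(n))$: first exhibit a matching $M\in\mathcal{M}_2(P(n))$ admitting a forcing set of cardinality exactly $\lceil n/12\rceil+1$, and then show that every $M\in\mathcal{M}_2(P(n))$ has forcing number at least $\lceil n/12\rceil+1$.

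For the upper bound, the natural building block is the chain $D^{3}$: it occupies twelve consecutive spokes and, once a single well-chosen edge is committed to the forcing set, its vertices can be peeled off iteratively via pendant-edge propagation in the sense of Corollary~\ref{structure}(1). Accordingly, I would define $M$ case-by-case on the residue class of $n$ modulo $12$ so that $M$ consists of a short prefix, chosen to absorb the parity constraint $3c+4d=n$ using as few copies of $C$ as possible, followed by roughly $\lfloor n/12\rfloor$ consecutive copies of $D^{3}$. Each $D^{3}$-block contributes one edge to the forcing set and the prefix contributes one additional edge, for a total of $\lceil n/12\rceil+1$. Verifying that the exhibited $S\subseteq M$ is genuinely a forcing set is carried out by Corollary~\ref{structure}(1)--(3) together with a Tutte-style auxiliary claim in the spirit of Claims~\ref{claim22} and~\ref{max33}, asserting that after the deletion of specified pairs of $u$-vertices a designated spoke is forced into every perfect matching of the remaining subgraph.

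For the lower bound I would decompose $M$ into maximal $C$- and $D$-segments, group them into $CD$-chains $W_1,\dots,W_m$, and define the weight
\begin{align*}
\alpha(W_i)=12\,|S_0\cap E(W_i)|-|M\cap E(W_i)|,
\end{align*}
so that $\alpha(P(n))=\sum_i\alpha(W_i)=12|S_0|-n$; assuming a forcing set of size at most $\lceil n/12\rceil$ therefore forces $\alpha(P(n))\le 12-(n\bmod 12)$ (or $\le 0$ when $12\mid n$). By Claim~\ref{quan}, each $CD$- or $DC$-junction contributes an $M$-alternating $8$-cycle, and each subchain $D^{4}$ contributes another one analogous to the $B^{4}$-cycle of Fig.~\ref{claim1}(b); these two families of obstructions yield $|S_0\cap E(W_i)|\ge c_i+\lfloor d_i/4\rfloor$ (where $c_i,d_i$ count the copies of $C$ and $D$ in $W_i$) and hence a per-chain lower bound on $\alpha(W_i)$. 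Calling $W_i$ \emph{tight} when equality holds, I would enumerate (as in Claim~\ref{alter}) the pairs of consecutive tight chains that are locally forbidden by exhibiting, for each forbidden configuration, a specific $M$-alternating cycle of $P(n)$ disjoint from every admissible local choice of forcing edges.

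The main obstacle will be the final classification step, where the counting must yield strictly more than $\lceil n/12\rceil$ edges in the forcing set. For the finitely many cyclic patterns in which every chain is tight (principally the highly-periodic matchings built around $D^{n/4}$ and its close relatives with a few $C$'s inserted to satisfy the parity condition), one must pinpoint a long, typically wrap-around $M$-alternating cycle that evades every pattern-compatible forcing set; this is what produces the extra ``$+1$''. As in Cases~1--2 of Theorem~\ref{thm1min}, tracking exactly which global alternating cycles survive each extremal configuration is expected to be the most delicate bookkeeping of the proof.
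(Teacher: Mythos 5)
Your overall template (exhibit an extremal matching, then a weighted counting argument for the lower bound) matches the paper's, but both halves of your plan contain substantive errors that would derail the proof.

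\textbf{The upper-bound construction is backwards.} You propose building $M$ from repeating $D^{3}$ blocks with a short prefix using ``as few copies of $C$ as possible.'' In fact the $D$-heavy matchings are far from extremal: the paper's closed-form expression $f(P(n),D^{d}C^{c})=\lceil\frac{6d+3c+\eta(d)}{12}\rceil+\xi(d,c)$ shows that each $D$ costs $6$ units over $4$ spokes while each $C$ costs $3$ units over $3$ spokes, so $f(P(n),D^{n/4})\approx n/8$, not $n/12+1$. You can already see this in Table \ref{tab}: for $n=12$ the $\mathcal{M}_2$-part of the forcing polynomial is $4x^{3}+3x^{2}$, where the three matchings with forcing number $2=\lceil\frac{12}{12}\rceil+1$ are the rotations of $C^{4}$ and the four rotations of $D^{3}$ have forcing number $3$. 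The paper's extremal matchings are exactly the opposite of yours: a repeating block $C^{4}$ (twelve spokes, one forcing edge each) preceded by a short prefix that uses as few $D$'s as possible to satisfy $3c+4d=n$. Your claim that one edge per $D^{3}$ block suffices fails because of alternating cycles spanning consecutive $D$-blocks; a chain $DDD$ already contains an $M$-alternating cycle, and long runs of $D$'s require roughly one forcing edge per two copies of $D$.

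\textbf{The lower-bound inequality is false.} You assert $|S_0\cap E(W_i)|\ge c_i+\lfloor d_i/4\rfloor$ by analogy with the bound $|S_0\cap E(W_i)|\ge a_i+\lfloor b_i/4\rfloor$ used for $\mathcal{M}_1$. That analogy breaks: the structure $A$ contains an $M$-alternating $8$-cycle entirely within itself, but neither $C$ (six vertices) nor $D$ does --- the $8$-cycles of Claim \ref{quan} live only at $CD$- and $DC$-junctions. So you cannot charge one forcing edge to every copy of $C$; if your inequality held it would give $|S_0|\ge n/3$ for $M=C^{n/3}$, contradicting the upper bound $\lceil n/12\rceil+1$. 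The paper's actual argument uses a different decomposition: it anchors chains at the structures containing $S_0$-edges (``s-chains'') and exploits the fact that a chain containing no edge of $S_0$ must be one of $C,D,CC,DD,CCC$ (at most $9$ matching edges), so each s-chain carries at most $13$ matching edges per forcing edge; the remaining gap between $n/13$ and $n/12+1$ is then closed by a case analysis on which s-chains ($CCCD$, $CCCC$, $DDD$, \dots) can occur and by exhibiting global alternating cycles in the residual tight configurations. Until both the extremal matching and the per-chain inequality are corrected, the proposal does not establish either direction of the theorem.
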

    \begin{proof}
        We divide our proof in two steps. First we find a perfect matching in $\mathcal{M}_2(P(n))$ with forcing number no more than $\lceil \frac { n }{ 12 } \rceil+1$. Then we prove that the forcing number of each perfect matching in $\mathcal{M}_2(P(n))$ is no less than $\lceil \frac { n }{ 12 } \rceil+1$.

        (1) Now we give a perfect matching $M$ (see Fig. \ref{lower12}) of $P(n)$ expressed by
        \begin{align*}
        \left\{ \begin{array} {ll} CDD(CCCC)^{\lfloor\frac{n-11}{12}\rfloor} & \makebox{if}~n\equiv 11~(\text{mod}~12), \\  CCCC(CCCC)^{\lfloor\frac{n-11}{12}\rfloor} & \makebox{if}~n\equiv 0~(\text{mod}~12),\\ CCCD(CCCC)^{\lfloor\frac{n-11}{12}\rfloor} & \makebox{if}~n\equiv 1~(\text{mod}~12), \\  CCDD(CCCC)^{\lfloor\frac{n-11}{12}\rfloor} & \makebox{if}~n\equiv 2~(\text{mod}~12), \\ CCCCC(CCCC)^{\lfloor\frac{n-11}{12}\rfloor} & \makebox{if}~n\equiv 3~(\text{mod}~12), \\ CCCCD(CCCC)^{\lfloor\frac{n-11}{12}\rfloor} & \makebox{if}~n\equiv 4~(\text{mod}~12), \\ CCCDD(CCCC)^{\lfloor\frac{n-11}{12}\rfloor} & \makebox{if}~n\equiv 5~(\text{mod}~12), \\ CCCCCC(CCCC)^{\lfloor\frac{n-11}{12}\rfloor} & \makebox{if}~n\equiv 6~(\text{mod}~12), \\ CCCCCD(CCCC)^{\lfloor\frac{n-11}{12}\rfloor} & \makebox{if}~n\equiv 7~(\text{mod}~12), \\  CCCCDD(CCCC)^{\lfloor\frac{n-11}{12}\rfloor} & \makebox{if}~n\equiv 8~(\text{mod}~12), \\ CCCCCCC(CCCC)^{\lfloor\frac{n-11}{12}\rfloor} & \makebox{if}~n\equiv 9~(\text{mod}~12), \\  CDDCDD(CCCC)^{\lfloor\frac{n-11}{12}\rfloor} & \makebox{if}~n\equiv 10~(\text{mod}~12).          \end{array} \right.
        \end{align*}

        \begin{figure}[ht]
            \centering
            \includegraphics[height=4.95in]{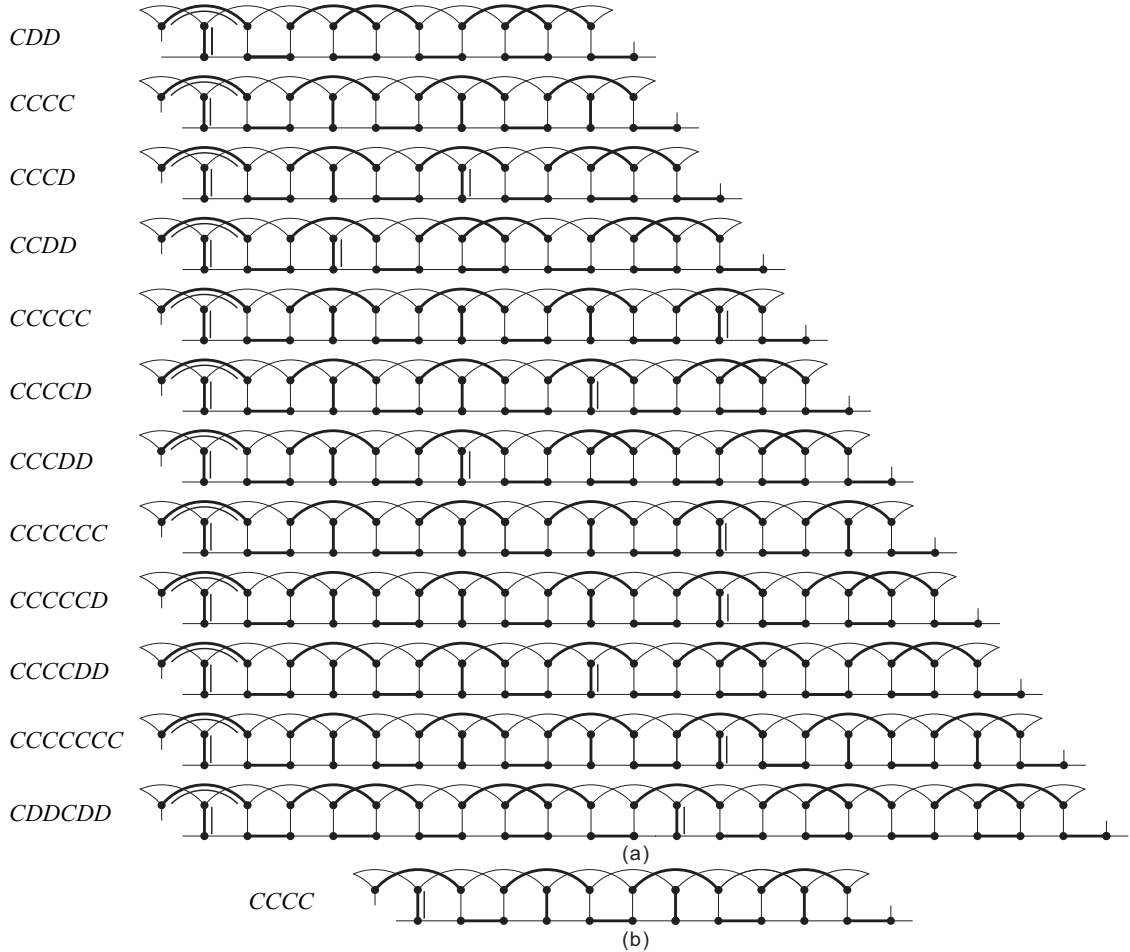}
            \caption{Perfect matching which achieves the lower bound of Theorem \ref{thm2min}.}
            \label{lower12}
        \end{figure}

        Similar to the proof of Claim \ref{last} in Theorem \ref{thm2max}, we can confirm that the edge subset in Fig. \ref{lower12} is a forcing set of $M$ of $P(n)$.

        (2) Next we prove that for each $M\in \mathcal{M}_2(P(n))$, we have $f(P(n),M)\ge\lceil \frac {n}{12} \rceil+1$. The initial cases of $11\le n\le 36$ can be verified from Table \ref{tab}. From now on suppose $n\ge 37$. To the contrary, suppose that $\mathcal{M}_2(P(n))$ has a perfect matching $M$ with a forcing set $S_0$ such that $|S_0|<\lceil \frac {n}{12} \rceil+1$. That is, $n-12|S_0|\ge -11$.

        Let us consider $P(n)$ with perfect matching $M$ as follows. We define an \emph{s-structure} to be a chain $C$ or $D$ which contains some edges of $S_0$, and an \emph{s-chain} to be a chain formed by an s-structure and its immediate left-hand maximal chain which contains no edges of $S_0$.

        Note that a chain contains no edges of $S_0$ if and only if it is either chain $C,$ $D,$ $CC,$ $DD$ or $CCC$ by Theorem \ref{lem5}. It follows that the number of edges of an s-chain $W$ in $M$ is no more than 13. Furthermore, it equals 13 if and only if $W$ is chain $CCCD$; it equals 12 if and only if $W$ is chain $CCCC$ or $DDD$; it equals 11 if and only if $W$ is chain $DDC$; it equals 10 if and only if $W$ is chain $CCD$; it equals 9 if and only if $W$ is chain $CCC$.

        Given an s-chain decomposition $W_1,W_2,\ldots,W_m$ ($m\ge 3$), let
        \[\beta (W_i)=|M\cap E(W_i)|-12|S_0\cap E(W_i)|\]
        for $i=1,2,\ldots,m$. Then
        \begin{align}
            \label{equu2}
            \beta (P(n)):=\sum_{i=1}^m\beta (W_i)=n-12|S_0|\ge -11.
        \end{align}

        Then we know each $W_i$ should be precisely one of the following cases.

        \textbf{Case 1.} There is a $W_i$ containing at least two edges of $S_0$. It follows that
        \begin{align*}
            &|M\cap E(P(n)-V(W_i))|= n-|M\cap E(W_i)|,\\
            &|S_0\cap E(P(n)-V(W_i))|\le |S_0|-2.
        \end{align*}
        Then by Eq. (\ref{equu2}), we have
        \begin{align*}
            \sum_{j\neq i}\beta(W_j)&=\beta (P(n)-V(W_i))\\
            &=|M\cap E(P(n)-V(W_i))|-12|S_0\cap E(P(n)-V(W_i))|\\
            &\ge n-|M\cap E(W_i)|-12|S_0|+24\\
            &\ge 13-|M\cap E(W_i)|\ge 0.
        \end{align*}
        Hence there are at least $13-|M\cap E(W_i)|$ s-chains $CCCD$ from $\{W_j:i\neq j=1,2,\ldots,m\}$ each containing precisely one edge of $S_0$.

        \textbf{Case 1.1.} $\sum_{j\neq i}\beta(W_j)\ge 1$. Without loss of generality, suppose $W_k$ as $P(n)[\{u_t,v_{t+1}:t=n-1,0,\ldots,11\}]$ is a chain $CCCD$ containing precisely one edge of $S_0$ ($k\neq i$). We claim that $|M\cap E(W_{k+1})|\le 8$ (the subscripts module $m$) and equality holds if and only if $W_{k+1}$ is chain $DD$. To the contrary, suppose $|M\cap E(W_{k+1})|> 8$. By Claim \ref{quan} in Theorem \ref{thm2max}, we can obtain an $M$-alternating cycle $C'$
        $u_{9}u_{11}u_{13}v_{13}v_{12}v_{11}v_{10}v_{9}u_{9}$ if $u_{8}u_{10}\in S_0$ and the first chain of $W_{k+1}$ is $C$, $u_{13}u_{15}u_{17}v_{17}v_{16}v_{15}v_{14}v_{13}u_{13}$ if $u_{8}u_{10}\in S_0$ and the first chain of $W_{k+1}$ is $DC$, $u_{9}u_{11}u_{13}u_{15}v_{15}v_{16}v_{17}v_{18}u_{18}u_{16}u_{14}u_{12}v_{12}v_{11}v_{10}v_{9}u_{9}$ if $u_{8}u_{10}\in S_0$ and the first chain of $W_{k+1}$ is $DD$, $u_{3}u_{5}u_{7}u_{9}u_{11}v_{11}v_{12}v_{13}v_{14}u_{14}u_{12}u_{10}u_{8}u_{6}v_{6}v_{5}v_{4}v_{3}u_{3}$ if $v_{9}v_{10}\in S_0$ and the first chain of $W_{k+1}$ is $D$, $u_{3}u_{5}u_{7}u_{9}u_{11}u_{13}v_{13}v_{14}v_{15}v_{16}u_{16}u_{14}u_{12}u_{10}u_{8}u_{6}v_{6}v_{5}v_{4}v_{3}u_{3}$ if $v_{9}v_{10}\in S_0$ and the first chain of $W_{k+1}$ is $CC$, and $u_{13}u_{15}u_{17}v_{17}v_{16}v_{15}v_{14}v_{13}u_{13}$ if $v_{9}v_{10}\in S_0$ and the first chain of $W_{k+1}$ is $CD$. In all cases mentioned above, the obtained $M$-alternating cycle $C'$ contains no edges of $S_0$, a contradiction.

        Note that if $W_{k+1}=W_i$, then we could find another s-chain $CCCD$ different from $W_k$ and $W_i$ containing precisely one edge of $S_0$, which satisfies the above argument as $W_k$. So we may assume that $W_{k+1}\neq W_i$. Hence
        \begin{align*}
            \sum_{j\neq i,k,k+1}\beta(W_j)&=\beta (P(n)-V(W_i)-V(W_k)-V(W_{k+1}))\\
            =&|M\cap E(P(n)-V(W_i)-V(W_k)-V(W_{k+1}))|\\
            &-12|S_0\cap E(P(n)-V(W_i)-V(W_k)-V(W_{k+1}))|\\
            =&(|M\cap E(P(n)-V(W_i))|-|M\cap E(W_k)|-|M\cap E(W_{k+1})|)\\
            &-12(|S_0\cap E(P(n)-V(W_i))|-|S_0\cap E(W_k)|-|S_0\cap E(W_{k+1})|)\\
            \ge& (|M\cap E(P(n)-V(W_i))|-13-8)-12(|S_0\cap E(P(n)-V(W_i))|-1-1)\\
            \ge& 16-|M\cap E(W_i)|\ge 3.
        \end{align*}
        It follows that there are at least $16-|M\cap E(W_i)|$ s-chains $CCCD$ from $\{W_j:j=1,2,\ldots,m,j\neq i,k,k+1\}$ each containing precisely one edge of $S_0$. Iterating the above procedure, this deduces a contradiction to $n$ being finite by a similar argument as above.

        \textbf{Case 1.2.} $\sum_{j\neq i}\beta(W_j)=0$. Similarly, we know that $W_i$ is the only chain $CCCD$ which contains precisely two edges of $S_0$, and others $W_j$ must be chain $CCCC$ or $DDD$ which contain precisely one edge of $S_0$, respectively. Without loss of generality, let $W_i=P(n)[\{u_t,v_{t+1}:t=n-1,0,\ldots,11\}]$. Since there is an $M$-alternating path $u_{n-1}u_{1}u_3v_3v_4v_5u_5u_7v_7v_8u_8u_{10}u_{12}$ containing no edge $u_9u_{11},$ $v_9v_{10}$ or $v_{11}v_{12}$, an $M$-alternati-\\ng path $u_{n-1}u_{1}u_3v_3v_4v_5u_5u_7u_9u_{11}u_{13}$ containing no edge $u_8u_{10},$ $v_9v_{10}$ or $v_{11}v_{12}$, an $M$-alternating path $u_{n-1}u_{1}u_3v_3v_4v_5u_5u_7v_7v_8v_9v_{10}v_{11}v_{12}$ containing no edge $u_8u_{10}$ or $u_9u_{11}$, an $M$-alternating path $u_{12j}u_{12j+2}u_{12j+4}v_{12j+4}v_{12j+5}v_{12j+6}v_{12j+7}u_{12j+7}u_{12j+9}u_{12j+11}v_{12j+11}$\\$v_{12j+12}$ containing no edge $u_{12j+10}v_{12j+10}$ if $W_{i+j}$ (the subscripts modulo $m$) is chain $CCCC$, an $M$-alternating path $u_{12j+1}v_{12j+1}v_{12j+2}v_{12j+3}u_{12j+3}u_{12j+5}v_{12j+5}v_{12j+6}v_{12j+7}$\\$u_{12j+7}u_{12j+9}u_{12j+11}v_{12j+11}v_{12j+12}$ containing no edge $u_{12j+10}v_{12j+10}$ if $W_{i+j}$ is chain $CCCC$, an $M$-alternating path $u_{12j}u_{12j+2}u_{12j+4}u_{12j+6}v_{12j+6}v_{12j+5}u_{12j+5}u_{12j+7}u_{12j+9}u_{12j+11}v_{12j+11}$\\$v_{12j+12}$ containing no edge $u_{12j+8}u_{12j+10}$ or $v_{12j+9}v_{12j+10}$ if $W_{i+j}$ is chain $DDD$, and an $M$-alternating path $u_{12j+1}u_{12j+3}v_{12j+3}v_{12j+4}u_{12j+4}u_{12j+6}v_{12j+6}v_{12j+5}u_{12j+5}u_{12j+7}u_{12j+9}u_{12j+11}$\\$v_{12j+11}v_{12j+12}$ containing no edge $u_{12j+8}u_{12j+10}$ or $v_{12j+9}v_{12j+10}$ if $W_{i+j}$ is chain $DDD$, we can generate an $M$-alternating cycle which contains no edges of $S_0$, a contradiction.

        \textbf{Case 2.} Each $W_i$ contains precisely one edge of $S_0$. By a similar argument to Case 1.2, we have that there is a $W_j$ being chain $C$ or $D$. Then
        \begin{align*}
            &|M\cap E(P(n)-V(W_j))|\ge n-4,\\
            &|S_0\cap E(P(n)-V(W_j))|=|S_0|-1.
        \end{align*}
        Then by Eq. (\ref{equu2}), we have
        \begin{align*}
            \sum_{k\neq j}\beta(W_k)=\beta (P(n)-V(W_j))\ge n-4-12|S_0|+12\ge -3.
        \end{align*}
        By a similar argument to Case 1.1, we have $\sum_{k\neq j}\beta(W_k)\le 0$, and there are no s-chains $CCCD$ except for the case of $\sum_{k\neq j}\beta(W_k)=-3$ with precisely one s-chain $CCCD$ from $\{W_i:j\neq i=1,2,\ldots m\}$. Furthermore, if $\sum_{k\neq j}\beta(W_k)=0$, then $W_j$ is chain $C$ or $D$ and others $W_k$ are all chains $CCCC$ or $DDD$; if $\sum_{k\neq j}\beta(W_k)=-1$, then $W_j$ is chain $C$ or $D$ and others $W_k$ are chains $CCCC$ or $DDD$ except for one being $DDC$; if $\sum_{k\neq j}\beta(W_k)=-2$, then $W_j$ is chain $C$ or $D$ and others $W_k$ are one of the two cases: (1) chains $CCCC$ or $DDD$ except for one being $CCD$, and (2) chains $CCCC$ or $DDD$ except for two being $DDC$; if $\sum_{k\neq j}\beta(W_k)=-3$, then $W_j$ is chain $D$ and others $W_k$ are one of the four cases: (1) chains $CCCC$ or $DDD$ except for continuous two being chains $CCCD$ and $DD$, (2) chains $CCCC$ or $DDD$ except for one being $CCC$, (3) chains $CCCC$ or $DDD$ except for two with one being $CCD$ and the other being $DDC$, and (4) chains $CCCC$ or $DDD$ except for three being $DDC$. Similar to Case 1.2, all cases mentioned above deduce a contradiction.
    \end{proof}

    Note that the above result does not hold for $n=10$.

\subsection{Continuity}
    \begin{thm}
        \label{thm2con}
        For $n\ge 3,$ $\{f(P(n),M): M \in \mathcal{M}_2(P(n))\}$ is continuous$.$
    \end{thm}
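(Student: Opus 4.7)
The plan is to mimic Theorem~\ref{thm1con}: identify a local transformation along an $M$-alternating cycle that takes one perfect matching in $\mathcal{M}_2(P(n))$ to another, show that each such transformation changes the forcing number by at most $1$, and verify that the transformations together connect every pair of matchings in $\mathcal{M}_2(P(n))$ to a common base matching.

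The primary swap I would use is $CD \leftrightarrow DC$: in every $CD$ (or $DC$) chain there is an $M$-alternating $8$-cycle by Claim~\ref{quan}, and taking the symmetric difference with this cycle produces a new matching $M_2$ in which the $CD$ block has been replaced by $DC$ (equivalently, the spoke shifts by four positions). One checks directly that $M_2 \in \mathcal{M}_2(P(n))$, and that on the ten-vertex support $W$ of the swap every minimum forcing set $S_1$ of $M_1$ satisfies $1 \le |S_1 \cap E(W)| \le 2$: the lower bound comes from Theorem~\ref{lem5} applied to the $8$-cycle, while the upper bound follows by exhibiting two specific edges of $M_1 \cap E(W)$ (for instance, the spoke and a suitable rung) that determine every remaining edge of $M_1 \cap E(W)$, so $(S_1 \setminus E(W)) \cup \{\text{those two edges}\}$ is still a forcing set of $M_1$. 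Converting $S_1 \cap E(W)$ to the matching-corresponding edges of $M_2 \cap E(W)$ yields a forcing set of $M_2$, giving $|f(P(n), M_1) - f(P(n), M_2)| \le 1$.

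Since $CD \leftrightarrow DC$ only cyclically rearranges the $(c,d)$-sequence, a secondary transformation is required to traverse between matchings with different pairs $(c,d)$ satisfying $3c + 4d = n$. The natural candidate is the block swap $CCCC \leftrightarrow DDD$, whose symmetric difference is an $M$-alternating $16$-cycle spanning $12$ positions of the strip. The same style of determinability analysis applies with a slightly larger constant bound on $|S_1 \cap E(W)|$; by first performing $CD \leftrightarrow DC$ swaps to bring four consecutive $C$'s (or three consecutive $D$'s) together, then executing the block swap, and finally rearranging, every matching in $\mathcal{M}_2(P(n))$ can be reduced to a standard one (such as $D^{n/4}$ when $4\mid n$, or $CD^{(n-3)/4}$ otherwise). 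Combined with Theorems~\ref{thm2max} and~\ref{thm2min}, this shows that the forcing spectrum of $\mathcal{M}_2(P(n))$ forms an integer interval, establishing continuity.

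The main obstacle will be the determinability argument for the $16$-cycle underlying $CCCC \leftrightarrow DDD$, since the swap region has substantially more edges and can in principle absorb several forcing-set edges at once. I expect to handle this by identifying a small set of controlling edges analogous to the spoke-and-rung pair used in the $CD \leftrightarrow DC$ case, possibly supplemented by Tutte-type forcing observations in the style of Claim~\ref{claim22}, so that specifying those controlling edges pins down every remaining edge of $M$ in the region. Small $n$ and $(c,d)$-degenerate cases where the block swap is unavailable because $3c + 4d = n$ admits a unique nonnegative solution can be verified directly against Table~\ref{tab}, mirroring the initial-case handling at the end of Theorem~\ref{thm1con}.
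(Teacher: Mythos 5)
Your overall strategy coincides with the paper's: the paper proves exactly your two transformation lemmas (Lemma~\ref{conclusion1} for $CD\leftrightarrow DC$ and Lemma~\ref{conclusion} for $C^4\leftrightarrow D^3$, i.e. $D^dC^c\to D^{d+3}C^{c-4}$) and then chains them to connect any two matchings of $\mathcal{M}_2(P(n))$ through the normal forms $D^dC^c$, with $3\le n\le 36$ checked from Table~\ref{tab}. However, there is a genuine gap in your justification of the key step, and you have located the difficulty in the wrong place. The hard part is not that the $16$-cycle region of the $CCCC\leftrightarrow DDD$ swap is large; it is already present for the $8$-cycle swap, namely the sentence ``Converting $S_1\cap E(W)$ to the matching-corresponding edges of $M_2\cap E(W)$ yields a forcing set of $M_2$.'' This is false as stated. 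Determinability of $M_1\cap E(W)$ by a few controlling edges only tells you that the modified set still blocks every $M_1$-alternating cycle; it says nothing about $M_2$-alternating cycles that pass through $W$ in a configuration not present for $M_1$. Concretely, after the local replacement the new edge set may be contained in a perfect matching of $\mathcal{M}_1(P(n))$ (or another member of $\mathcal{M}_2(P(n))$), i.e.\ there is a long alternating cycle winding through $W$ and the adjacent chains that the purely local substitution does not hit. This is why the paper's Lemma~\ref{conclusion1} splits on whether $S_1\setminus E(W)$ extends to a perfect matching of $\mathcal{M}_1(P(n))$ and, when it does, enlarges the replacement window to the neighbouring chains $U$ and $V$ (and in Lemma~\ref{conclusion} to $U''$, $V''$, even $U'''$), choosing substitute edges there; most of the length of both lemmas is spent on exactly this interaction, which your proposal does not address.

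Two smaller points. First, your claimed bound $|S_1\cap E(W)|\le 2$ for the $CD$ swap is optimistic: the paper needs three controlling edges ($u_iu_{i+2}$, $u_{i+1}v_{i+1}$, $u_{i+4}u_{i+6}$) to determine all seven edges of $M_1\cap E(W)$, so the upper bound it proves is $3$, and the case $|S_1\cap E(W)|=1$ (forced to lie in the middle of the $8$-cycle by Claim~\ref{quan}) is precisely the one requiring the global analysis above. Second, your proposed base matching $CD^{(n-3)/4}$ only exists when $n\equiv 3\pmod 4$; the correct statement is that the block swap $C^4\to D^3$ connects all pairs $(c,d)$ with $3c+4d=n$, which is what the paper uses. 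With the missing case analysis supplied along the lines of Lemmas~\ref{conclusion1} and~\ref{conclusion}, your outline becomes the paper's proof.
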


    In order to prove the theorem, we need the following two lemmas.

    \begin{lem}
        \label{conclusion1}
        For $n\ge 37,$ let $M_1$ be a perfect matching expressed by a sequence of at least one $C$ and at least one $D,$ $M_2$ be the perfect matching obtained from $M_1$ by transforming one chain $CD$ to $DC,$ and maintaining the other parts$.$ Then
        \[|f(P(n),M_1)-f(P(n),M_2)|\le 1.\]
    \end{lem}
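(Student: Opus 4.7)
The plan is to adapt the local-transformation argument from the proof of Theorem \ref{thm1con} to the $CD\leftrightarrow DC$ exchange; by the symmetry of the construction it suffices to show $f(P(n),M_2)\le f(P(n),M_1)+1$. Let $W$ denote the subgraph of $P(n)$ induced by $V(W)=\{u_i,u_{i+1},\ldots,u_{i+6},v_{i+1},v_{i+2},\ldots,v_{i+7}\}$, where $i$ marks the left end of the transformed chain. A direct comparison shows that $M_1\cap E(W)$ and $M_2\cap E(W)$ are the two perfect matchings of $W$ corresponding to the patterns $CD$ and $DC$, respectively, and their symmetric difference is exactly the $8$-cycle
\[u_{i+1}v_{i+1}v_{i+2}v_{i+3}v_{i+4}v_{i+5}u_{i+5}u_{i+3}u_{i+1},\]
which coincides with the alternating $8$-cycle of Claim \ref{quan} and is alternating with respect to both $M_1$ and $M_2$.

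The central step is to exhibit a set $T\subseteq M_2\cap E(W)$ with $|T|\le 2$ having the following \emph{determining property}: every $M_2$-alternating cycle of $P(n)$ that uses at least one edge of $M_2\cap E(W)$ contains some edge of $T$. A natural candidate is $T=\{u_{i+1}u_{i+3},u_{i+5}v_{i+5}\}$, which certainly meets the symmetric-difference $8$-cycle above. Once such a $T$ is fixed, take any minimum forcing set $S_1$ of $M_1$. Because the $M_1$-alternating $8$-cycle lies entirely in $W$, Theorem \ref{lem5} yields $|S_1\cap E(W)|\ge 1$. Set $S_2:=(S_1\setminus E(W))\cup T$. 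Any $M_2$-alternating cycle $C$ either avoids $V(W)$ --- in which case $C$ is also an $M_1$-alternating cycle disjoint from $E(W)$, and hence meets $S_1\setminus E(W)\subseteq S_2$ --- or uses an edge of $M_2\cap E(W)$ and hence meets $T\subseteq S_2$. Therefore $S_2$ is a forcing set of $M_2$, and
\[|S_2|\le |S_1|-|S_1\cap E(W)|+|T|\le f(P(n),M_1)+1.\]
Exchanging the roles of $M_1$ and $M_2$ and using a symmetric determining set $T'\subseteq M_1\cap E(W)$ (obtained by the identical argument applied to the same $8$-cycle viewed from the other side) yields the reverse inequality.

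The principal obstacle is the verification of the determining property for $T$. One must enumerate every $M_2$-alternating cycle of $P(n)$ that uses an $M_2$-edge inside $W$ lying outside $T$; such a cycle either closes up within $W$ or exits $V(W)$ through one of the eight boundary edges ($u_iv_i,u_{i+7}v_{i+7},u_{i-2}u_i,u_{i-1}u_{i+1},u_{i+5}u_{i+7},u_{i+6}u_{i+8},v_iv_{i+1},v_{i+7}v_{i+8}$) and subsequently re-enters. Each admissible entry/exit pattern at the boundary must be ruled out by tracing the forced alternation along the cycle, together with Tutte-style odd-component arguments in the spirit of Claims \ref{claim22} and \ref{max33}. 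The hypothesis $n\ge 37$ ensures that $P(n)-V(W)$ is large enough to preclude wrap-around alternating cycles that could otherwise evade $T$.
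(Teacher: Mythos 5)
Your overall scheme (replace $S_1\cap E(W)$ by a small ``determining'' set $T\subseteq M_2\cap E(W)$ and use the dichotomy that an $M_2$-alternating cycle either misses $V(W)$ or uses an $M_2$-edge of $W$) is the same local-transformation framework the paper uses, and the dichotomy itself is sound. The gap is in the one claim you defer: no $2$-element subset of $M_2\cap E(W)$ has the determining property, so the candidate $T=\{u_{i+1}u_{i+3},u_{i+5}v_{i+5}\}$ cannot be verified. Concretely, an $M_2$-alternating cycle can run along the inner rim through $W$ as $\ldots u_{i-2}u_iu_{i+2}u_{i+4}u_{i+6}v_{i+6}v_{i+7}\ldots$, picking up the $M_2$-edges $u_iu_{i+2}$, $u_{i+4}u_{i+6}$ and $v_{i+6}v_{i+7}$ of $W$ while avoiding both edges of your $T$; cycles of exactly this shape occur in $P(n)$ (they are the long alternating cycles of Fig.~\ref{upperkk}). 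This is why the paper's replacement set for $M_2\cap E(W)$ has \emph{three} edges ($u_iu_{i+2}$, $u_{i+4}u_{i+6}$, $u_{i+5}v_{i+5}$), and with $|T|=3$ your count $|S_2|\le|S_1|-|S_1\cap E(W)|+|T|$ only yields the desired $+1$ when $|S_1\cap E(W)|\ge 2$.

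The remaining case $|S_1\cap E(W)|=1$ is the actual crux of the lemma, and your plan has no mechanism for it. The paper handles it by first pinning down the single edge to the four edges of the alternating $8$-cycle (via Claim~\ref{quan}), and then splitting on whether $S_1\setminus E(W)$ extends to a perfect matching of the \emph{other} type $\mathcal{M}_1(P(n))$: if it does not, a two-edge replacement suffices precisely because the rim-type cycles above would force a type-$\mathcal{M}_1$ matching; if it does, one must locate extra edges of $S_1$ in the neighboring chains $U$ and $V$ and re-spend them, with the subcase $k=0$ ruled out because $S_1$ would then fail to be forcing at all. This interplay between the two matching classes $\mathcal{M}_1(P(n))$ and $\mathcal{M}_2(P(n))$, and the borrowing of edges from adjacent chains, is the substantive content of the proof and is absent from your proposal; the boundary-tracing enumeration you sketch in the last paragraph would, if carried out, simply reveal that the determining set must have three edges and would land you back at the unresolved $|S_1\cap E(W)|=1$ case.
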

    \begin{proof}
        We illustrate the labels in Fig. \ref{upper11}. In fact, $M_2$ is the symmetric difference between $M_1$ and the $M_1$-alternating cycle $u_{i+1}u_{i+3}u_{i+5}v_{i+5}v_{i+4}v_{i+3}v_{i+2}v_{i+1}u_{i+1}$. Denote the subgraph $P(n)[\{u_s,v_{s+1}:s=i,i+1,\ldots,i+6\}]$ by $W$, and the immediate left-hand and right-hand chains $C$ or $D$ of $W$ by $U$ and $V$, respectively.

        For each minimum forcing set $S_1$ of $M_1$, the number of edges of $W$ in $S_1$ is no less than one by Theorem \ref{lem5}, and no more than three since the edges $u_{i}u_{i+2},$ $u_{i+1}v_{i+1},$ $u_{i+4}u_{i+6}$ can determine all edges of $M_1\cap E(W)$.

        If $|S_1\cap E(W)|\ge 2$, then we could obtain a forcing set of $M_2$ from $S_1$ by transforming all edges in $S_1\cap E(W)$ to three edges $u_{i}u_{i+2},$ $u_{i+4}u_{i+6},$ $u_{i+5}v_{i+5}$ and maintaining the other edges, which implies $f(P(n),M_2)\le f(P(n),M_1)+1$.

        If $|S_1\cap E(W)|=1$, then $S_1\cap E(W)\subset \{u_{i+1}v_{i+1},v_{i+2}v_{i+3},v_{i+4}v_{i+5},u_{i+3}u_{i+5}\}$ by Claim \ref{quan} in Theorem \ref{thm2max}. If $S_1\setminus E(W)$ is contained in no perfect matchings in $\mathcal{M}_1(P(n))$, then we could obtain a forcing set of $M_2$ from $S_1$ by transforming the edge in $S_1\cap E(W)$ to two edges $u_{i}u_{i+2},$ $u_{i+5}v_{i+5}$ and maintaining the other edges, which implies $f(P(n),M_2)\le f(P(n),M_1)+1$. We now consider the case that $S_1\setminus E(W)$ is contained in some perfect matching in $\mathcal{M}_1(P(n))$ according to $|S_1\cap (E(U)\cup E(V))|:=k$ as follows.

        If $k\ge 2$, then we could obtain a forcing set of $M_2$ from $S_1$ by transforming the $k$ edges and the edge in $S_1\cap E(W)$ to four edges $u_{i+5}v_{i+5}$, $u_{i+4}u_{i+6}$, $u_{i-2}v_{i-2}$, $u_{i+8}v_{i+8}$ and maintaining the other edges if $U$ is $C$ and $V$ is $C$, four edges $u_{i+5}v_{i+5}$, $u_{i+4}u_{i+6}$, $u_{i-2}v_{i-2}$, $u_{i+8}u_{i+10}$ and maintaining the other edges if $U$ is $C$ and $V$ is $D$, four edges $u_{i+5}v_{i+5}$, $u_{i+4}u_{i+6}$, $u_{i-4}u_{i-2}$, $u_{i+8}v_{i+8}$ and maintaining the other edges if $U$ is $D$ and $V$ is $C$, and four edges $u_{i+5}v_{i+5}$, $u_{i+4}u_{i+6}$, $u_{i-4}u_{i-2}$, $u_{i+8}u_{i+10}$ and maintaining the other edges if $U$ is $D$ and $V$ is $D$. If $k=1$, then $|S_1\cap E(U)|=1$, since if otherwise, then $S_1$ is contained in some perfect matching in $\mathcal{M}_1(P(n))$, a contradiction to $S_1$ being a forcing set of $M_1$. We could obtain a forcing set of $M_2$ from $S_1$ by transforming two edges in $S_1\cap (E(U)\cup E(W))$ to three edges $u_{i+5}v_{i+5}$, $u_{i+4}u_{i+6}$, $u_{i-2}v_{i-2}$ and maintaining the other edges if $U$ is $C$, and three edges $u_{i+5}v_{i+5}$, $u_{i+4}u_{i+6}$, $u_{i-4}u_{i-2}$ and maintaining the other edges if $U$ is $D$. If $k=0$, then $S_1$ is contained in some perfect matching in $\mathcal{M}_1(P(n))$, a contradiction. These imply $f(P(n),M_2)\le f(P(n),M_1)+1$.

        Similarly, we could obtain $f(P(n),M_1)\le f(P(n),M_2)+1$.
    \end{proof}

    \begin{lem}
        \label{conclusion}
        For $d\ge 0,$ $c\ge 4$ and $n\ge 37,$ let $M_3=D^{d}C^{c},$ $M_4=D^{d+3}C^{c-4}.$ Then
        \[|f(P(n),M_3)-f(P(n),M_4)|\le 1.\]
    \end{lem}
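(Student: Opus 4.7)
The plan is to run the same strategy as Lemma \ref{conclusion1}, but on a larger local region. Let $W$ denote the induced subgraph of $P(n)$ on the $24$ vertices carrying the four consecutive $C$'s of $M_3$; the same $24$ vertices carry the three consecutive $D$'s of $M_4$, so $M_3$ and $M_4$ agree edge-by-edge off $W$. A direct edge-trace shows that the symmetric difference $M_3\triangle M_4$ is a single $M_3$-alternating (equivalently $M_4$-alternating) $16$-cycle $Z$ sitting entirely inside $W$; the four common edges of $M_3\cap E(W)$ and $M_4\cap E(W)$ are the four that lie outside $Z$.

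First I would prove $f(P(n),M_4)\le f(P(n),M_3)+1$. Take a minimum forcing set $S_3$ of $M_3$. By Theorem \ref{lem5} applied to $Z$, $|S_3\cap E(W)|\ge 1$. Using Corollary \ref{structure} I would exhibit for the $CCCC$ pattern a small ``determining'' subset $T_3\subseteq M_3\cap E(W)$, so that at the cost of at most one extra edge one may assume $S_3\cap E(W)\subseteq T_3$. Then form
\[S_4:=(S_3\setminus E(W))\cup T_4\cup B,\]
where $T_4\subseteq M_4\cap E(W)$ is a determining subset for the $DDD$ pattern, chosen with $|T_4|+|B|\le |S_3\cap E(W)|+1$, and $B$ is a (possibly empty) set of boundary edges used to absorb the effect of the swap at the two ends of $W$. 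Then $|S_4|\le |S_3|+1$.

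The main obstacle is the verification that $S_4$ is a forcing set of $M_4$. Every $M_4$-alternating cycle either avoids $W$ (in which case it is also $M_3$-alternating and is hit by $S_3\setminus E(W)\subseteq S_4$), lies entirely in $W$ (hit by the determining property of $T_4$ via repeated pendant/cut-edge reductions as in Claims \ref{claim11}, \ref{minkk}, \ref{last}), or passes through one of the two boundary portals between $W$ and its complement. The last case forces a careful split on the two chains $U,V\in\{C,D\}$ immediately adjacent to $W$: each configuration determines how an $M_4$-alternating cycle traverses the portal, and $B$ must be chosen (or shown unnecessary) in each configuration. This case analysis parallels, and is the heart of, the $|S_1\cap E(W)|=1$ sub-cases in the proof of Lemma \ref{conclusion1}.

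Finally, the reverse inequality $f(P(n),M_3)\le f(P(n),M_4)+1$ follows by the symmetric argument starting from a minimum forcing set of $M_4$ and swapping the roles of $T_3$ and $T_4$. Combining both bounds gives $|f(P(n),M_3)-f(P(n),M_4)|\le 1$.
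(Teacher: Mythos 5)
Your outline reproduces the skeleton of the paper's argument (the same $24$-vertex window, the $16$-cycle symmetric difference, upper-bounding $|S\cap E(W')|$ by a small determining set, and local replacement of the forcing edges inside the window), but it has a genuine gap at exactly the hardest case, $|S_3\cap E(W')|=1$. There your budget forces $|T_4|+|B|\le 2$, while any subset of $M_4\cap E(W')$ that determines the $D^3$ pattern on its own needs more edges than that (the paper uses four: $u_{j}u_{j+2}$, $u_{j+1}u_{j+3}$, $v_{j+1}v_{j+2}$, $u_{j+9}u_{j+11}$). So $T_4$ cannot be ``a determining subset for the $DDD$ pattern''; the one or two edges you can afford must instead be chosen to cooperate with edges of $S_3$ lying in the neighbouring chains, and the paper's Cases 3.1--3.4 spend most of their length doing exactly this, reaching not only to the adjacent chains $U',V'$ but to $U''$, $V''$ and even $U'''$ when too few forcing edges sit near the window.

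Second, your trichotomy of $M_4$-alternating cycles hides the real obstruction. The dangerous cycles are not local crossings of the two ``boundary portals'' but the global alternating cycles whose symmetric difference with $M_4$ is a perfect matching of the \emph{other} type, i.e.\ in $\mathcal{M}_1(P(n))$. Whether such cycles are already hit by $S_3\setminus E(W')$ is precisely the dichotomy the paper tests repeatedly (``$S_3\setminus E(W')$ is contained in no perfect matchings in $\mathcal{M}_1(P(n))$'' versus not), and when the test fails the replacement edges must be relocated several chains away from $W'$. Your proposal never mentions this type-switching obstruction, and without it the verification that $S_4$ is a forcing set cannot be completed in the small-$k$ subcases. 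Finally, the reverse inequality is not literally ``the symmetric argument'': the determining sets, the admissible values of $|S\cap E(W')|$ ($\le 3$ for $C^4$ versus $\le 4$ for $D^3$), and the neighbour configurations all differ between the two directions, which is why the paper proves them separately.
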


    \begin{proof}
        For convenience, we assume that $M_4$ is obtained from $M_3$ by transforming one chain $C^4$ to $D^3$. We illustrate the labels in Fig. \ref{con2}. In fact, $M_4$ is the symmetric difference between $M_3$ and the $M_3$-alternating cycle $u_{j+1}u_{j+3}u_{j+5}u_{j+7}v_{j+7}v_{j+8}v_{j+9}v_{j+10}u_{j+10}u_{j+8}u_{j+6}$\\$u_{j+4}v_{j+4}v_{j+3}v_{j+2}v_{j+1}u_{j+1}$. Denote the subgraph $P(n)[\{u_s,v_{s+1}:s=j,j+1,\ldots,j+11\}]$ by $W'$, the immediate left-hand and right-hand chains $C$ or $D$ of $W'$ by $U'$ and $V'$ respectively, and the immediate left-hand (resp. right-hand) chain $C$ or $D$ of $U'$ (resp. $V'$) by $U''$ (resp. $V''$). Note that if $U'$ is $C$, then $V'$ is also $C$. Furthermore, if $U'$ and $V'$ both are $D$, then $M_4=D^{\frac n 4}$; if $U'$ and $V'$ both are $C$, then $M_3=C^{\frac n 3}$.

        \begin{figure}[ht]
            \centering
            \includegraphics[height=0.45in]{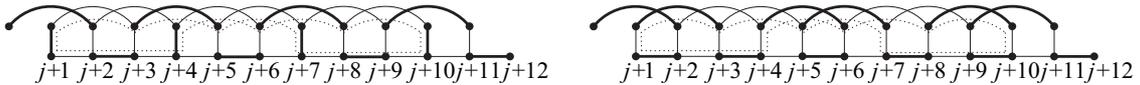}
            \caption{Alternating cycles in chains $C^4$ and $D^3$.}
            \label{con2}
        \end{figure}

        For each minimum forcing set $S_3$ of $M_3$, the number of edges of $W'$ in $S_3$ is no less than one by Theorem \ref{lem5}, and no more than three since the edges $u_{j}u_{j+2},$ $u_{j+1}v_{j+1},$ $u_{j+10}v_{j+10}$ can determine all edges of $M_3\cap E(W')$. Similarly, for each minimum forcing set $S_4$ of $M_4$, the number of edges of $W'$ in $S_4$ is no less than one, and no more than four since the edges $u_{j}u_{j+2},$ $u_{j+1}u_{j+3},$ $v_{j+1}v_{j+2}$, $u_{j+9}u_{j+11}$ can determine all edges of $M_4\cap E(W')$.

        First we prove that $f(P(n),M_3)\le f(P(n),M_4)+1$.

        If $|S_4\cap E(W')|\ge 2$, then we could obtain a forcing set of $M_3$ from $S_4$ by transforming all edges in $S_4\cap E(W')$ to three edges $u_{j}u_{j+2},$ $u_{j+1}v_{j+1},$ $u_{j+10}v_{j+10}$ and maintaining the other edges, which implies $f(P(n),M_3)\le f(P(n),M_4)+1$.

        Suppose $|S_4\cap E(W')|=1$. If $V'$ is $D$, then we could transform this case to the case of $|S_4\cap E(W')|\ge 2$ by selecting appropriate three continuous chains $D$ to replace $W'$ (for otherwise, $S_4$ is contained in some perfect matching in $\mathcal{M}_1(P(n))$). Denote the edge of $W'$ in $S_4$ by $e$. Then $e\in\{u_{j+1}u_{j+3},v_{j+1}v_{j+2},v_{j+3}v_{j+4},u_{j+4}u_{j+6},u_{j+5}u_{j+7},v_{j+7}v_{j+8},u_{j+8}u_{j+10},$\\$v_{j+9}v_{j+10}\}$ by Claim \ref{quan} in Theorem \ref{thm2max}. If $S_4\setminus \{e\}$ is contained in no perfect matchings in $\mathcal{M}_1(P(n))$, then we could obtain a forcing set of $M_3$ from $S_4$ by transforming $e$ to two edges $u_{j+1}v_{j+1},$ $u_{j+10}v_{j+10}$ and maintaining the other edges, which implies $f(P(n),M_3)\le f(P(n),M_4)+1$. We now consider the case that $V'$ is $C$ and $S_4\setminus \{e\}$ is contained in some perfect matching in $\mathcal{M}_1(P(n))$ according to $|S_4\cap (E(U')\cup E(V'))|:=k_4$ as follows.

        If $k_4\ge 2$, then we could obtain a forcing set of $M_3$ from $S_4$ by transforming the $k_4$ edges and $e$ to four edges $v_{j-3}v_{j-2}$, $u_{j}u_{j+2}$, $u_{j+1}v_{j+1}$, $u_{j+13}v_{j+13}$ and maintaining the other edges if $U'$ is $D$, and four edges $u_{j-2}v_{j-2}$, $u_{j}u_{j+2}$, $u_{j+1}v_{j+1}$, $u_{j+13}v_{j+13}$ and maintaining the other edges if $U'$ is $C$, which implies $f(P(n),M_3)\le f(P(n),M_4)+1$.

        Suppose $k_4=1$. If $|S_4\cap E(U')|=1$, then $e=v_{j+9}v_{j+10}$ by Claim \ref{quan} in Theorem \ref{thm2max}, which implies that $S_4$ is contained in some perfect matching in $\mathcal{M}_1(P(n))$, a contradiction. If $|S_4\cap E(V')|=1$, then $e\in \{u_{j+1}u_{j+3},v_{j+1}v_{j+2},v_{j+3}v_{j+4},u_{j+4}u_{j+6}\}$ and $S_4\cap E(V')=\{u_{j+13}v_{j+13}\}$, which implies that $S_4$ is contained in some perfect matching in $\mathcal{M}_1(P(n))$, a contradiction.

        If $k_4=0$, then $S_4$ is contained in some perfect matching in $\mathcal{M}_1(P(n))$, a contradiction.

        Next we prove that $f(P(n),M_4)\le f(P(n),M_3)+1$.

        \textbf{Case 1.} $|S_3\cap E(W')|=3$. Then we could obtain a forcing set of $M_4$ from $S_3$ by transforming all edges in $S_3\cap E(W')$ to four edges $u_{j}u_{j+2},$ $u_{j+1}u_{j+3},$ $v_{j+1}v_{j+2}$, $u_{j+9}u_{j+11}$ and maintaining the other edges, which implies $f(P(n),M_4)\le f(P(n),M_3)+1$.

        \textbf{Case 2.} $|S_3\cap E(W')|=2$. If $S_3\setminus E(W')$ is contained in no perfect matchings in $\mathcal{M}_1(P(n))$, then we could obtain a forcing set of $M_4$ from $S_3$ by transforming all edges in $S_3\cap E(W')$ to three edges $u_{j}u_{j+2},$ $v_{j+1}v_{j+2}$, $u_{j+9}u_{j+11}$ and maintaining the other edges, which implies $f(P(n),M_4)\le f(P(n),M_3)+1$. We now consider the case that $S_3\setminus E(W')$ is contained in some perfect matching in $\mathcal{M}_1(P(n))$ according to $|S_3\cap (E(U')\cup E(V'))|:=k_3$ as follows.

        \textbf{Case 2.1.} $k_3\ge 2$. Then we could obtain a forcing set of $M_4$ from $S_3$ by transforming all edges in $S_3\cap (E(U')\cup E(W')\cup E(V'))$ to four edges $u_{j-4}u_{j-2},$ $v_{j-1}v_{j}$, $u_{j+1}u_{j+3}$, $u_{j+13}v_{j+13}$ and maintaining the other edges if $U'$ is $D$ and $V'$ is $C$, five edges $u_{j-4}u_{j-2},$ $v_{j-1}v_{j}$, $u_{j+1}u_{j+3}$, $u_{j+9}u_{j+11}$, $v_{j+13}v_{j+14}$ and maintaining the other edges if $U'$ is $D$ and $V'$ is $D$, and four edges $u_{j-3}u_{j-1},$ $u_{j-2}v_{j-2}$, $v_{j+9}v_{j+10}$, $u_{j+13}v_{j+13}$ and maintaining the other edges if $U'$ is $C$ and $V'$ is $C$, which implies $f(P(n),M_4)\le f(P(n),M_3)+1$.

        \textbf{Case 2.2.} $k_3=1$. If $V'$ is $C$, then we could obtain a forcing set of $M_4$ from $S_3$ similar as above; if $V'$ is $D$ and $|S_3\cap E(U')|=1$, then we could obtain a forcing set of $M_4$ from $S_3$ by transforming all edges in $S_3\cap (E(U')\cup E(W'))$ to four edges $u_{j-4}u_{j-2},$ $v_{j-1}v_{j}$, $u_{j+1}u_{j+3}$, $u_{j+9}u_{j+11}$ and maintaining the other edges; if $V'$ is $D$ and $|S_3\cap E(V')|=1$, then we could obtain a forcing set of $M_4$ from $S_3$ by transforming all edges in $S_3\cap (E(W')\cup E(V'))$ to four edges $u_{j}u_{j+2},$ $v_{j+3}v_{j+4}$, $u_{j+5}u_{j+7}$, $u_{j+13}u_{j+15}$ and maintaining the other edges, which implies $f(P(n),M_4)\le f(P(n),M_3)+1$.

        \textbf{Case 2.3.} $k_3=0$. If $U'$ is $D$, then the two edges in $S_3\cap E(W')$ are contained in different chains $C$, which implies that $S_3$ is contained in some perfect matching in $\mathcal{M}_1(P(n))$, a contradiction. We now assume that $U'$ is $C$.

        If $|S_3\cap (E(U'')\cup E(V''))|\ge 2$, then we could transform this case to the case of $|S_3\cap E(W')|= 3$ by changing all edges in $S_3\cap (E(U'')\cup E(W')\cup E(V''))$ into four edges $u_{j-5}v_{j-5}$, $u_{j+7}v_{j+7}$, $u_{j+6}u_{j+8}$, $u_{j+16}v_{j+16}$ and maintaining the other edges; if $|S_3\cap (E(U'')\cup E(V''))|=0$, then $S_3$ is contained in some perfect matching in $\mathcal{M}_1(P(n))$, a contradiction; if $|S_3\cap (E(U'')\cup E(V''))|=1$ (w.l.o.g. suppose $|S_3\cap E(U'')|=1$), then we have $|S_3\cap E(V''')|\ge 1$, where $V'''$ is the immediate right-hand chain $C$ of $V''$ (for otherwise, $S_3$ is contained in another perfect matching in $\mathcal{M}_2(P(n))$). We could transform this case to the case of $|S_3\cap E(W')|= 2$ and $k_3=1$ by changing all edges in $S_3\cap (E(U'')\cup E(W')\cup E(V'''))$ into four edges $u_{j-5}v_{j-5}$, $u_{j+7}v_{j+7}$, $u_{j+6}u_{j+8}$, $u_{j+19}v_{j+19}$ and maintaining the other edges. These imply $f(P(n),M_4)\le f(P(n),M_3)+1$.

        \textbf{Case 3.} $|S_3\cap E(W')|=1$. If $U'$ is $C$, then we could transform this case to the case of $|S_3\cap E(W')|\ge 2$ by selecting appropriate four continuous chains $C$ to replace $W'$ (for otherwise, $S_3$ is contained in some perfect matching in $\mathcal{M}_1(P(n))$). We now consider the case that $U'$ is $D$ according to $k_3$ as follows.

        \textbf{Case 3.1.} $k_3\ge 3$. Then we could obtain a forcing set of $M_4$ from $S_3$ by transforming all edges in $S_3\cap (E(U')\cup E(W')\cup E(V'))$ to four edges $u_{j-4}u_{j-2},$ $v_{j-1}v_{j}$, $u_{j+1}u_{j+3}$, $u_{j+13}v_{j+13}$ and maintaining the other edges if $V'$ is $C$, and five edges $u_{j-4}u_{j-2},$ $v_{j-1}v_{j}$, $u_{j+1}u_{j+3}$, $u_{j+9}u_{j+11}$, $v_{j+13}v_{j+14}$ and maintaining the other edges if $V'$ is $D$, which implies $f(P(n),M_4)\le f(P(n),M_3)+1$.

        \textbf{Case 3.2.} $k_3=2$. If $V'$ is $C$, then we could obtain a forcing set of $M_4$ from $S_3$ similar as above. Suppose $V'$ is $D$. If $|S_3\cap E(U')|=2$, then we could obtain a forcing set of $M_4$ from $S_3$ by transforming all edges in $S_3\cap (E(U')\cup E(W'))$ to four edges $u_{j-4}u_{j-2},$ $v_{j-1}v_{j}$, $u_{j+1}u_{j+3}$, $u_{j+9}u_{j+11}$ and maintaining the other edges; if $|S_3\cap E(V')|=2$, then we could obtain a forcing set of $M_4$ from $S_3$ by transforming all edges in $S_3\cap (E(W')\cup E(V'))$ to four edges $u_{j}u_{j+2},$ $v_{j+3}v_{j+4}$, $u_{j+5}u_{j+7}$, $u_{j+13}u_{j+15}$ and maintaining the other edges. We now assume that $|S_3\cap E(U')|=|S_3\cap E(V')|=1$.

        If $S_3\setminus (E(U')\cup E(W')\cup E(V'))$ is contained in no perfect matchings in $\mathcal{M}_1(P(n))$, then we could obtain a forcing set of $M_4$ from $S_3$ by transforming all edges in $S_3\cap (E(U')\cup E(W')\cup E(V'))$ to four edges $u_{j-4}u_{j-2}$, $v_{j-3}v_{j-2}$, $u_{j+5}u_{j+7}$, $u_{j+13}u_{j+15}$ and maintaining the other edges; otherwise, then $|S_3\cap (E(U'')\cup E(V''))|\ge 1$ (for otherwise, $S_3$ is contained in some perfect matching in $\mathcal{M}_1(P(n))$). Without loss of generality, suppose $|S_3\cap E(U'')|\ge 1$. Then we could obtain a forcing set of $M_4$ from $S_3$ by transforming all edges in $S_3\cap (E(U'')\cup E(U')\cup E(W')\cup E(V'))$ to five edges $u_{j-8}u_{j-6},$ $v_{j-5}v_{j-4}$, $u_{j-3}u_{j-1}$, $u_{j+5}u_{j+7}$, $u_{j+13}u_{j+15}$ and maintaining the other edges. These imply $f(P(n),M_4)\le f(P(n),M_3)+1$.

        \textbf{Case 3.3.} $k_3=1$. Suppose $V'$ is $C$. If $S_3\setminus (E(U')\cup E(W')\cup E(V'))$ is contained in no perfect matchings in $\mathcal{M}_1 (P(n))$, then we could obtain a forcing set of $M_4$ from $S_3$ by transforming all edges in $S_3\cap (E(U')\cup E(W')\cup E(V'))$ to three edges $u_{j-4}u_{j-2}$, $u_{j+4}u_{j+6}$, $u_{j+13}v_{j+13}$ and maintaining the other edges; otherwise, we have $|S_3\cap E(U')|=1$ and $|S_3\cap E(U'')|\ge 1$ (for otherwise, $S_3$ is contained in some perfect matching in $\mathcal{M}_1(P(n))$). If $U''$ is $C$, then we could obtain a forcing set of $M_4$ from $S_3$ by transforming all edges in $S_3\cap (E(U'')\cup E(U')\cup E(W'))$ to four edges $u_{j-6}v_{j-6}$, $u_{j+4}u_{j+6}$, $v_{j+7}v_{j+8}$, $u_{j+9}u_{j+11}$ and maintaining the other edges. We now assume that $U''$ is $D$.

        If $|S_3\cap E(U'')|\ge 2$, then we could obtain a forcing set of $M_4$ from $S_3$ by transforming all edges in $S_3\cap (E(U'')\cup E(U')\cup E(W'))$ to five edges $u_{j-8}u_{j-6}$, $u_{j-4}u_{j-2}$, $u_{j+4}u_{j+6}$, $v_{j+7}v_{j+8}$, $u_{j+9}u_{j+11}$ and maintaining the other edges; if $|S_3\cap E(U'')|=1$, then $|S_3\cap E(U''')|\ge 1$, where $U'''$ is the immediate left-hand chain $C$ or $D$ of $U''$ (for otherwise, $S_3$ is contained in some perfect matching in $\mathcal{M}_1(P(n))$). We could obtain a forcing set of $M_4$ from $S_3$ by transforming all edges in $S_3\cap (E(U''')\cup E(U'')\cup E(U')\cup E(W'))$ to five edges $u_{j-12}u_{j-10}$, $u_{j-4}u_{j-2}$, $u_{j+4}u_{j+6}$, $v_{j+7}v_{j+8}$, $u_{j+9}u_{j+11}$ and maintaining the other edges if $U'''$ is $D$, and five edges $u_{j-10}v_{j-10}$, $u_{j-4}u_{j-2}$, $u_{j+4}u_{j+6}$, $v_{j+7}v_{j+8}$, $u_{j+9}u_{j+11}$ and maintaining the other edges if $U'''$ is $C$. These imply $f(P(n),M_4)\le f(P(n),M_3)+1$.

        Suppose $V'$ is $D$. If $S_3\setminus (E(U')\cup E(W')\cup E(V'))$ is contained in some perfect matchings in $\mathcal{M}_1 (P(n))$, then $|S_3\cap E(U')|= 1$ and $|S_3\cap E(U'')|\ge 1$ (for otherwise, $S_3$ is contained in some perfect matching in $\mathcal{M}_1(P(n))$). By a similar argument as above, we have $f(P(n),M_4)\le f(P(n),M_3)+1$. We now suppose that $S_3\setminus (E(U')\cup E(W')\cup E(V'))$ is contained in no perfect matchings in $\mathcal{M}_1 (P(n))$.

        If $|S_3\cap E(U')|=1$, then we have $S_3\cap E(U')\subset \{u_{j-3}u_{j-1},v_{j-3}v_{j-2},v_{j-1}v_j\}$ and $S_3\cap E(W')=\{u_{j+10}v_{j+10}\}$. If $S_3\cap E(U')\subset \{u_{j-3}u_{j-1},v_{j-1}v_j\}$, then we can transform this case to the case of $|S_3\cap E(W')|=2$ by changing the edge in $S_3\cap E(U')$ into edge $u_{j+1}v_{j+1}$ and maintaining the other edges; if $S_3\cap E(U')=\{v_{j-3}v_{j-2}\}$, then $|S_3\cap E(U'')|\ge 1$ (for otherwise, $S_3$ is contained in another perfect matching in $\mathcal{M}_2(P(n))$). If $|S_3\cap E(U'')|\ge 2$, then we could obtain a forcing set of $M_4$ from $S_3$ by transforming all edges in $S_3\cap (E(U'')\cup E(U')\cup E(W'))$ to five edges $u_{j-8}u_{j-6}$, $u_{j-4}u_{j-2}$, $u_{j+4}u_{j+6}$, $v_{j+7}v_{j+8}$, $u_{j+9}u_{j+11}$ and maintaining the other edges; if $|S_3\cap E(U'')|= 1$, then $S_3\cap E(U'')\subset \{u_{j-7}u_{j-5},v_{j-7}v_{j-6},v_{j-5}v_{j-4}\}$. We could obtain a forcing set of $M_4$ from $S_3$ by transforming all edges in $S_3\cap (E(U'')\cup E(U')\cup E(W'))$ to four edges $u_{j-7}u_{j-5}$, $v_{j-7}v_{j-6}$, $u_{j+1}u_{j+3}$, $u_{j+9}u_{j+11}$ and maintaining the other edges. These imply $f(P(n),M_4)\le f(P(n),M_3)+1$. Similarly, if $|S_3\cap E(V')|=1$, then $f(P(n),M_4)\le f(P(n),M_3)+1$.

        \textbf{Case 3.4.} $k_3=0$. Then $S_3$ is contained in another perfect matching in $\mathcal{M}_2(P(n))$, a contradiction.
    \end{proof}

    Note that we can derive the following expression to obtain the above lemma.

    \[f(P(n),D^dC^c)=\lceil \frac { 6d+3c+\eta(d) }{ 12 } \rceil+\xi(d,c),\]
    where $\eta(d)=9$ if $d$ is even$,$ and $\eta(d)=12$ otherwise$;$ $\xi (d,c)=1$ if either $d=0$ and $c\equiv 1$ (mod 4), or $d=1$ and $c\equiv 2$ (mod 4), and $\xi (d,c)=0$ otherwise.

    \begin{proof}[Proof of Theorem \ref{thm2con}]
        The initial cases of $3\le n\le 36$ can be verified from Table \ref{tab}. From now on suppose $n\ge 37$. For two arbitrary perfect matchings $M_i$ and $M_j$ in $\mathcal{M}_2(P(n))$, we can first give a series of transformations as Lemma \ref{conclusion1} from $M_i$ (resp. $M_j$) to $D^{d_i}C^{c_i}$ (resp. $D^{d_j}C^{c_j}$) with the variation of forcing numbers during each transformation no more than one (where $d_i$ and $c_i$ (resp. $d_j$ and $c_j$) denote the number of $D$ and $C$ in $M_i$ (resp. $M_j$), respectively), then give a series of transformations from $D^{d_i}C^{c_i}$ to $D^{d_j}C^{c_j}$ as Lemma \ref{conclusion} with the variation of forcing numbers during each transformation no more than one. Then the theorem holds.
    \end{proof}


\begin{thebibliography}{99}
    \bibitem{1}P. Adams, M. Mahdian, E.S. Mahmoodian, On the forced matching numbers of bipartite graphs, {\it Discrete Math.\/} {\bf 281} (2004) 1--12.
    \bibitem{2}P. Afshani, H. Hatami, E.S. Mahmoodian, On the spectrum of the forced matching number of graphs, {\it Australas. J. Comb.\/} {\bf 30} (2004) 147--160.
    \bibitem{per3}B. Alspach, J. Liu, On the Hamilton connectivity of generalized Petersen graphs, {\it Discrete Math.\/} {\bf 309} (2009) 5461--5473.
    \bibitem{barahona}F. Barahona, J. Fonlupt, A.R. Mahjoub, Compositions of graphs and polyhedra \Rmnum{4}: Acyclic spanning subgraphs, {\it SIAM J. Discrete Math.\/} {\bf 7} (1994) 390--402.
    \bibitem{per1}A. Behzad, M. Behzad, C.E. Praeger, On the domination number of the generalized Petersen graphs, {\it Discrete Math.\/} {\bf 308} (2008) 603--610.
    \bibitem{chechen}Z. Che, Z. Chen, Forcing on perfect matchings --- A survey, {\it MATCH Commun. Math. Comput. Chem.\/} {\bf 66} (2011) 93--136.
    \bibitem{per2}S. Dantas, C.M.H. de Figueiredo, G. Mazzuoccolo, M. Preissmann, V.F. dos Santos, D. Sasaki, On the total coloring of generalized Petersen graphs, {\it Discrete Math.\/} {\bf 339} (2016) 1471--1475.
    \bibitem{deng1}K. Deng, H. Zhang, Anti-forcing spectra of perfect matchings of graphs, {\it J. Comb. Optim.\/} {\bf 33} (2017) 660--680.
    \bibitem{deng2}K. Deng, H. Zhang, Anti-forcing spectrum of any cata-condensed hexagonal system is continuous, {\it Front. Math. China\/} {\bf 12} (2017) 19--33.
    \bibitem{newww}G.B. Ekinci, J.B. Gauci, On the reliability of generalized Petersen graphs, {\it Discrete Appl. Math.\/}, DOI: 10.1016/j.dam.2017.02.002.
    \bibitem{robin}B. Guenin, R. Thomas, Packing directed circuits exactly, {\it Combinatorica\/} {\bf 31} (2011) 397--421.
    \bibitem{5}P. Hansen, M. Zheng, Bonds fixed by fixing bonds, {\it J. Chem. Inf. Comput. Sci.\/} {\bf 34} (1994) 297--304.
    \bibitem{original}F. Harary, D.J. Klein, T.P. \v{Z}ivkovi\'c, Graphical properties of polyhexes: Perfect matching vector and forcing, {\it J. Math. Chem.\/} {\bf 6} (1991) 295--306.
    \bibitem{jiangxiao}X. Jiang, H. Zhang, On forcing matching number of boron-nitrogen fullerene graphs, {\it Discrete Appl. Math.\/} {\bf 159} (2011) 1581--1593.
    \bibitem{jiang2}X. Jiang, H. Zhang, The maximum forcing number of cylindrical grid, toroidal 4-8 lattice and Klein bottle 4-8 lattice, {\it J. Math. Chem.\/} {\bf 54} (2016) 18--32.
    \bibitem{early}D.J. Klein, M. Randi\'c, Innate degree of freedom of a graph, {\it J. Comput. Chem.\/} {\bf 8} (1987) 516--521.
    \bibitem{9}S. Kleinerman, Bounds on the forcing numbers of bipartite graphs, {\it Discrete Math.\/} {\bf 306} (2006) 66--73.
    \bibitem{kotzig}A. Kotzig, On the theory of finite graphs with a linear factor \Rmnum{2}, {\it Mat. -Fyz. \v{C}asopis Slovensk. Akad. Vied.\/} {\bf 9} (1959) 73--91.
    \bibitem{stopsigns}F. Lam, L. Pachter, Forcing numbers of stop signs, {\it Theor. Comput. Sci.\/} {\bf 303} (2003) 409--416.
    \bibitem{Lei} H. Lei, Y. Yeh, H. Zhang, Anti-forcing numbers of perfect matchings of graphs, {\it Discrete Appl. Math.\/} {\bf 202} (2016) 95--105.
    \bibitem{lovasz}L. Lov\'asz, M.D. Plummer, {\it Matching Theory\/}, Annals of Discrete Math., Vol. 29, North-Holland, Amsterdam, 1986.
    \bibitem{lucchesi}C.L. Lucchesi, D.H. Younger, A minimax theorem for directed graphs, {\it J. Lond. Math. Soc.\/} {\bf 17} (1978) 369--374.
    \bibitem{squaregrids}L. Pachter, P. Kim, Forcing matchings on square grids, {\it Discrete Math.\/} {\bf 190} (1998) 287--294.
    \bibitem{RK}M. Randi\'c, D.J. Klein, {\it Mathematical and Computational Concepts in Chemistry}, in: N. Trinajsti\'c (Ed.), John Wiley \& Sons, New York, 1985, pp. 274--282.
    \bibitem{15}M.E. Riddle, The minimum forcing number for the torus and hypercube, {\it Discrete Math.\/} {\bf 245} (2002) 283--292.
    \bibitem{lingyige}G. Schrag, L. Cammack, On the 2-extendability of the generalized Petersen graphs, {\it Discrete Math.\/} {\bf 78} (1989) 169--177.
    \bibitem{unique}X. Wang, W. Shang, J. Yuan, On graphs with a unique perfect matching, {\it Graphs Combin.\/} {\bf 31} (2015) 1765--1777.
    \bibitem{yedong}H. Wang, D. Ye, H. Zhang, The forcing number of toroidal polyhexes, {\it J. Math. Chem.\/} {\bf 43} (2008) 457--475.
    \bibitem{def}M. Watkins, A theorem on Tait colorings with an application to the generalized Petersen graphs, {\it J. Comb. Theory\/} {\bf 6} (1969) 152--164.
    \bibitem{Xu}L. Xu, H. Bian, F. Zhang, Maximum forcing number of hexagonal systems, {\it MATCH Commun. Math. Comput. Chem.\/} {\bf 70} (2013) 493--500.
    \bibitem{yu}Q. Yu, Classifying 2-extendable generalized Petersen graphs, {\it Discrete Math.\/} {\bf 103} (1992) 209--220.
    \bibitem{deng}H. Zhang, K. Deng, Spectrum of matching forcing numbers of a hexagonal system with a forcing edge, {\it MATCH Commun. Math. Comput. Chem.\/} {\bf 73} (2015) 457--471.
    \bibitem{20}F. Zhang, X. Li, Forcing bonds of a benzenoid system, {\it Acta Math. Appl. Sin.\/} {\bf 12} (1996) 209--215.
    \bibitem{19}F. Zhang, X. Li, Hexagonal systems with forcing edges, {\it Discrete Math.\/} {\bf 140} (1995) 253--263.
    \bibitem{22}H. Zhang, D. Ye, W.C. Shiu, Forcing matching numbers of fullerene graphs, {\it Discrete Appl. Math.\/} {\bf 158} (2010) 573--582.
    \bibitem{zhao}H. Zhang, S. Zhao, R. Lin, The forcing polynomial of catacondensed hexagonal systems, {\it MATCH Commun. Math. Comput. Chem.\/} {\bf 73} (2015) 473--490.
    \bibitem{zhoux1}H. Zhang, X. Zhou, A maximum resonant set of polyomino graphs, {\it Discuss. Math. Graph Theory\/} {\bf 36} (2016) 323--337.
    \bibitem{zhao1}S. Zhao, H. Zhang, Forcing polynomials of benzenoid parallelogram and its related benzenoids, {\it Appl. Math. Comput.\/} {\bf 284} (2016) 209--218.
    \bibitem{zhoux2}X. Zhou, H. Zhang, A minimax result for perfect matchings of a polyomino graph, {\it Discrete Appl. Math.\/} {\bf 206} (2016) 165--171.
    \bibitem{zhou}X. Zhou, H. Zhang, Clar sets and maximum forcing numbers of hexagonal systems, {\it MATCH Commun. Math. Comput. Chem.\/} {\bf 74} (2015) 161--174.
    \end{thebibliography}
\end{document}